\DeclareMathAlphabet{\pazocal}{OMS}{zplm}{m}{n}
\newtheorem{theorem}{Theorem}[section]
\newtheorem{definition}[theorem]{Definition}
\newtheorem{corollary}[theorem]{Corollary}
\newtheorem{proposition}[theorem]{Proposition}
\newtheorem{assumption}[theorem]{Assumption}
\newtheorem{remark}[theorem]{Remark}
\numberwithin{equation}{section}
\numberwithin{theorem}{section}
\newcommand{\qed}{\hfill$\Box$}
\newenvironment{proof}{\begin{trivlist}\item[]{\em Proof:}\/}{\qed\end{trivlist}}
\newenvironment{proofof}[1][Proof]{\noindent \textit{#1:} }{\ \qed}
\newcommand{\R}{{\mathbb R}}                                                    
\newcommand{\sd}{{\R_*^d}}
\newcommand{\N}{{\mathbb N}}                                                    
\newcommand{\dsd}{ { \N_0^d\setminus\{0\} } }
\newcommand{\M}{{\mathcal M}}
\newcommand{\X}{{\mathcal{X}}}
\newcommand{\op}{{\mathcal{T} }}                               
\newcommand{\opi}{{\mathcal{T}_1 }}                            
\newcommand{\opii}{{\mathcal{T}_2 }}                               
\newcommand{\xt}{X_{\feo,T,\epsilon}}
\newcommand{\trunc}{{\{\epsilon\leq|x|\leq2/\epsilon\}}}
\newcommand{\fe}{{f_\epsilon}}
\newcommand{\fen}{{f_{\epsilon_n}}}
\newcommand{\feo}{{f_{\epsilon,0}}}
\newcommand{\ggel}{\gamma_\text{gel}}
\newcommand{\pgel}{\lambda_\text{gel}}
\DeclareMathOperator*{\supp}{supp} 
\newcommand{\ind}{\mathbb{1}}
\begin{document}

\title{Global existence  of measure-valued solutions to the multicomponent Smoluchowski coagulation equation}

\newcommand \ale{\color{purple}}
\newcommand \mf{\color{blue}}
\newcommand \rd{\color{blue}}

\newcommand{\email}[1]{E-mail: \tt #1}

\newcommand{\emailmarina}{\email{marina.ferreira@math.univ-toulouse.fr}}
\newcommand{\UTaddress}{\em CNRS $\&$ Institut de Mathématiques de Toulouse, UMR 5219, \\  \em
Université Paul Sabatier, 118 route de Narbonne,
31062 Toulouse Cedex 9, France 
 }

\newcommand{\emailsakari}{\email{sakari.pirnes@helsinki.fi}}
\newcommand{\UHaddress}{\em University of Helsinki, Department of Mathematics 
and Statistics\\
\em P.O. Box 68, FI-00014 Helsingin yliopisto, Finland}

\author{Marina A. Ferreira \thanks{\emailmarina}, Sakari Pirnes \thanks{\emailsakari}
\\[1em]%
$\,^*$\UTaddress
\\[1em]%
$\,^\dag$\UHaddress}

\maketitle

\begin{abstract}

Global solutions to the multicomponent Smoluchowski coagulation equation are constructed for measure-valued initial data with minimal assumptions on the moments. The framework is based on an abstract formulation of the Arzelà-Ascoli theorem for uniform spaces. The result holds for a large class of coagulation rate kernels, satisfying a power-law upper bound with possibly different singularities at small-small, small-large and large-large coalescence pairs. This includes in particular both mass-conserving and gelling kernels, as well as interpolation kernels used in applications. We also provide short proofs of mass-conservation and gelation results for any weak solution, which extends previous results for one-component systems.

\end{abstract}

\bigskip

\textbf{Keywords:}  Smoluchowski coagulation equation; multicomponent;  time-dependent solutions; measure solutions; discrete equation; mass-conservation; gelation; well-posedness; singular coagulation kernels.

\bigskip

\textbf{Subject classification:} 35Q82, 45K05

\newpage
\tableofcontents

%auto-ignore
% final sec 1
\section{Introduction}
\subsection{Aim of the paper}
\label{sec:aim}
In this paper, we will prove existence, mass-conservation and gelation results for a large
class of Smoluchowski coagulation equations describing multicomponent systems.
 In coagulation
systems the particles are often characterized by one parameter representing
their size. In the discrete multicomponent system, the clusters are characterized 
by a $d$-dimensional composition vector
$\alpha=(\alpha_1,\alpha_2,\dots,\alpha_d)\in \dsd$,
where $\N_0=\{0,1,2,\dots\}$.
The components of the composition vector can characterize for instance the
number of molecules of each chemical component of the particles or their
parameters such as charge (cf.\ \cite{Vehkam}). In the continuous
case, the composition vector $x\in\sd=[0,\infty)^d\setminus\{0\}$
allows the parameters of the particles to take continuous values, e.g. the
volume of each chemical component. 

The time evolution of the
composition distributions $n$ and $f$ in the discrete and continuous cases,
respectively, are given by the coagulation equations
\begin{subequations}
\label{B4}%
    \begin{align}
        \label{eq:B4_discrete}%
        \partial_t n(\alpha,t)
        &=
        \frac{1}{2}
        \sum_{\beta<\alpha}
        K(\alpha-\beta,\beta)n(\beta,t)n({\alpha-\beta},t)
        -
        n(\alpha,t)\sum_{\beta>0} K({\alpha,\beta})n(\beta,t),
        \\
        \label{eq:B4_continuous}%
        \partial_t f(x,t)
        &=
        \frac{1}{2}
        \int_{\{0<y<x\}}
        K(x-y,y)f(x-y,t)f(y,t)dy
        -
        f(x,t)
        \int_{\sd}
        K(x,y)f(y,t)dy,
    \end{align}%
\end{subequations}%
where $t\ge0$ represents time and $K$ is the collision kernel which is assumed
to be non-negative and symmetric. The notation $a<b$ used for vectors $a,b\in \R^d$
means that $a\ne b$ and $a\leq b$, where $a\leq b$ means that $a_k\leq b_k$ for
all $k=1,2,\dots,d$.

In order to study simultaneously the discrete \eqref{eq:B4_discrete} and
continuous  \eqref{eq:B4_continuous} equations we will be working with measure-valued solutions $f(dx,t)$. 
This will
include the continuous case \eqref{eq:B4_continuous} when $f(dx,t)=f(x,t)dx$,
where we have abused the notation by denoting the density of the measure
$f(dx,t)$ with respect to the Lebesgue measure $dx$ at time $t$  by $f(x,t)$.
And the discrete case \eqref{eq:B4_discrete} when the measure-valued
solution has the form
\begin{align}
    \label{eq:dis_is_cont}
    f(dx,t) = \sum_{\alpha\in\N_0^d\setminus\{0\}} n_\alpha(t)\delta_\alpha(dx),
\end{align}
where $\delta_\alpha$ is the Dirac measure supported at $\alpha$.
The precise definition of measure-valued solutions will be given in the next
section in Definition \ref{def:time-dep-sol-thesis}.

Most of the previous papers in mathematics consider one-component 
systems, altough in many applications the systems have several components. Multicomponent systems with $d=2$ have been introduced in \cite{L} by Lushnikov, where solutions to the continuous equation \eqref{eq:B4_continuous} with the constant kernel $K(x,y)=1$ were computed for exponentially distributed initial data. More recently,  formulas of solutions to \eqref{eq:B4_discrete} and \eqref{eq:B4_continuous} for  other explicitly solvable kernels, namely, the additive $K(x,y)=|x+y|$ and multiplicative $K(x,y)=|x||y|$ kernels were obtained \cite{FDGG, FDGG2, Krapivsky} with initial data supported on the monomers or following a gamma distribution. The procedures developed in these papers to obtain explicit solutions rely on multidimensional Laplace transform methods, which generalizes the available methods to solve one-component equations; see, for instance, \cite{MP04}.
Interestingly, in multicomponent systems, there are classes of kernels for which Laplace methods do not apply, but for which an explicit solution can be obtained by assuming certain symmetries on the multicomponent kernels and by using the explicit solutions available for one-component systems. This is the case for kernels which are constant along lines passing through the origin, i.e., satisfying $K(r\theta,s\theta) = Q(\theta),$ with $\theta \in \R^d, |\theta|=1,\ r,s>0$ \cite{localization}.

However, for most kernels used in applications, explicit formulas for the solutions are not available. This is the case in atmospheric aerosol science, where the diffusion and ballistic
kernels are used \cite{Olenius, Vehkam} (see also \cite{Fried, Fuchs} where the physical aspects of aerosol particles are described). In the multicomponent case, the diffusion and ballistic kernels are respectively of the form \cite{FLNV2}
\begin{align}
    \label{eq:diffusion}
    K(x,y) = c_0
                \left( |x|^{-\frac{1}{3}}+ |y|^{-\frac{1}{3}}  \right) 
                \left( |x|^{\frac{1}{3}}+ |y|^{\frac{1}{3}}  \right) 
\end{align}
and 
\begin{align}
    \label{eq:ballistic}
                K(x,y) = c_0
                \left( |x|^{-1}+ |y|^{-1}  \right)^\frac{1}{2} 
                \left( |x|^{\frac{1}{3}}+ |y|^{\frac{1}{3}}  \right) ^2.
\end{align}

Here, $c_0>0$ and  $|\cdot|$ denotes the $\ell^1$-norm of $\R^d$. The diffusion
 kernel \eqref{eq:diffusion} is used to describe the coagulation rate between particles with
radius of order one micrometre (the so-called continuum region), while the ballistic
kernel \eqref{eq:ballistic} is used for particles with radius of order one nanometre (free molecular region).
Up to today, the rate between other pairs of particles remains unclear, including the rates between medium-sized particles, and  between  small and  large particles
(see \cite{Veshchunov}, \cite{Suresh} and \cite{Thajudeen}, where several derivations of such rates are discussed using numerical simulations, scaling analysis and measurements).
The insufficient data on coagulation rates can be overtaken by considering general classes of kernels which allow for possibly different asymptotic behaviours in different regions of the size space.

In this paper we then consider a very general class of coagulation rate kernels. 
We suppose that the coagulation rate kernel $K$ is continuous, non-negative and 
symmetric, namely,
\begin{align}
    \label{eq:condK_cont}
    K\in C(\sd\times \sd;[0,\infty)), \text{ and } K(x,y)=K(y,x), \ \forall
    x,y\in \sd.
\end{align}
Moreover, we suppose that the kernel $K$ satisfies the following upper bound

\begin{align}
    \label{eq:condK_sym2}
    K(x,y) \leq c_2
    \begin{cases}
         |x|^{-\beta}|y|^{-\beta},&  |x|,|y|\leq1,\\
         |x|^{\gamma_1+\lambda_1}|y|^{-\lambda_1},&  |x|\ge1,|y|\leq1,\\
         |x|^{\gamma_2+\lambda_2}|y|^{-\lambda_2},&  |x|,|y|\ge1 ,\\
    \end{cases}
    \quad
    \text{for }|y|\leq |x|,
    \end{align}
for a constant $0<c_2<\infty$ and parameters $\beta\in\R$ and $\gamma,\lambda\in \R^2$
satisfying $-\lambda_j\leq\gamma_j+\lambda_j$  and $\gamma_j+\lambda_j\leq 1$ for {$j=1,2$}. 
Note that the assumption  $-\lambda_j\leq\gamma_j+\lambda_j$ can be assumed without loss of generality. The reason we do not have two different exponents when $|x|,|y|\leq 1$ is that we could always choose $-\beta$ to be the smaller of the two exponents without losing any relevant information on the kernel or initial data.

While our mass-conservation and gelation results do not need any additional conditions on the kernel, due to technical reasons we have to assume $\gamma_j+\lambda_j<1$ for both $j=1,2$ in order to prove existence.
The existence result hold to any initial data $f_0$ that is a positive Radon measure and satisfies 
\begin{equation}
       \label{eq:ini_cond}%
        \int_{\sd}\left(|x|^{\min(-\beta,-\lambda_1)-r}+|x|\right)f_0(dx) <\infty
\end{equation} 
for some small $r>0$, and the mass-conservation and gelation results hold with $r=0$.

The upper bound \eqref{eq:condK_sym2} generalizes many classes of kernels in the literature which are bounded by only one  power law defined by
$K_{\lambda',\gamma'}(x,y)=|x|^{-\lambda'}|y|^{\gamma'+\lambda'}+|y|^{-\lambda'}|x|^{\gamma'+\lambda'}$
with real parameters $\gamma',\lambda'\in\R,-\lambda'\leq\gamma'+\lambda'$ (see for instance \cite{Dust, FL06, localization, ThromUniqueness}).
We recall that the parameter $\gamma'$ characterizes the rate of coagulation of particles of similar  sizes and $\lambda'$ fine tunes the rate of coagulation of particles of different sizes.
The parameter $\gamma'$ is called homogeneity of $K_{\lambda',\gamma'}(x,y)$ in the literature.
Previous results  (see for instance \cite{LuisaIyerMagnanini23, Barik} and the book \cite{mathbook}) show that $\gamma'\leq1$ is sufficient to ensure mass-conservation, whereas $\gamma'>1$ leads to gelation, i.e., loss of mass-conservation in finite time.

One reason for considering the  upper bound \eqref{eq:condK_sym2} is that 
in this paper we prove that for mass to be conserved it is sufficient to only assume that $\gamma_2\leq1$ and no extra assumptions on the other parameters $\beta,\lambda$ and $\gamma_1$ are needed. 
This means that a kernel can have homogeneity greater than $1$ for 
large-small coalescence pairs and still conserve mass. 
This generalizes the state of the art results in the literature also in the one-component case \cite{Barik}.
Another reason is that this class contains kernels  which satisfy different power-law behaviours 
at small-small, large-large and small-large coalescence pairs, such as
the so-called transition kernels for aerosol particles which equal the diffusive kernel \eqref{eq:diffusion} for $|x|,|y| \geq 1$ 
%in the continuum region (large sizes) 
and the ballistic kernel \eqref{eq:ballistic} for $|x|,|y| \leq 1$,
%in the free molecular region (small sizes) 
i.e., they satisfy \eqref{eq:condK_sym2} with $-\beta=-1/2, \ \gamma_2=0,\ -\lambda_2=-1/3$ and for some $\gamma_1, \lambda_1 \in \R$. 

For completion, we follow the proof given in \cite{Dust} for $d=1$ to show that any given solution fails to conserve mass, i.e., gelation occurs in finite time, provided the kernel is bounded below by a polynomial with homogeneity greater than 1. To this end, we will assume  in addition to \eqref{eq:condK_cont} and \eqref{eq:condK_sym2} the lower bound 
\begin{align}
    \label{eq:lower}
    c_1(|x|^{\ggel+\pgel}|y|^{-\pgel}+|y|^{\ggel+\pgel}|x|^{-\pgel})\leq K(x,y) \quad \forall x,y\in\sd
\end{align}    
for some $c_1>0$, $\ggel>1$ and $-\pgel\leq \ggel+\pgel$. Gelling solutions for multicomponent equations have been recently constructed and studied in \cite{ Heydecker2019, Hoogendijk2024, cross-multi,Norris2000} for multiplicative kernels of the form $K(x,y) = x^TAy$ with $A$ symmetric and non-negative matrix, i.e., they satisfy \eqref{eq:lower} with $\ggel = 2$, and for general gelling kernels in \cite{LuisaIyerMagnanini23} for a generalization of the Smoluchowski coagulation equation, called the Flory's equation. 

Previous well-posedness results for $d=1$ have been obtained for the discrete equation \eqref{eq:B4_discrete} (see for instance \cite{Ball,  Jeon, McLeod}), as well as for the continuous equation \eqref{eq:B4_continuous} with initial data in $L^1$, see for example \cite{Barik, Camejo, Dubowski, Dust, EM05} and the recent book by Lamb, Laurençot and Banasiak \cite{mathbook}. In particular, the recent existence result of mass-conserving solutions in $L^1$ obtained by Barik, Giri and Laurençot in \cite{Barik} holds for kernels satisfying \eqref{eq:condK_sym2} with $\beta>0$, $-\lambda_1=-\beta$ and $\gamma_1+\lambda_1 = \gamma_2+\lambda_2 = 1$ and $-\lambda_2 = 0$ and initial condition satisfying $\int_0^\infty  (x^{-2\beta}+ x) f_0(x)dx<\infty$. We allow more singular initial data by only assuming finiteness of the $-\beta-r$ moment in \eqref{eq:ini_cond}. Although we prove mass-conservation for a strictly larger class of kernels, we are not able to prove existence for kernels that require $\gamma_j+\lambda_j=1$ for $j=1$ or $j=2$ due to the technical assumption $\gamma_j+\lambda_j<1$ needed by our existence proof. On the other hand, we prove existence for mass-conserving kernels with $-\lambda_2\ne0$ or $-\lambda_1\ne-\beta$, and the same existence proof holds for a large class of gelling kernels.

Studies considering measure-valued solutions are more scarce  in the literature. 
In the case of one-component systems, Menon and Pego \cite{MP04} prove existence and uniqueness of measure-valued solutions  for the explicitly solvable kernels. 
 Norris  \cite{Norris} and  Fournier and Laurençot \cite{FL06} obtain several well-posedness results for general kernels that satisfy $K(x,y)\leq\omega(x)\omega(y)$, where $\omega$ is a sub{linear} function, i.e., it satisfies $\omega(a x) \leq a \omega(x)$ for $a \geq 1$, $x >0$.
 In particular, under stronger condition, $\int_{(0,\infty)}  (\omega(x) + \omega(x)^2) f_0(dx)<\infty$, on the initial data and an additional assumption, $\omega^2$ is sublinear or $K(x,y)\leq\omega(x)+\omega(y)$, on the kernel, a global existence result is given in \cite[Theorem 2.1]{Norris} and generalized in \cite{Norris2000} for multi-component systems.
  Interestingly, by using a family of  stochastic coalescent processes, the conditions on the initial data can be relaxed to just one moment bound $\int_{(0,\infty)}  \omega(x) f_0(dx)<\infty$ and the additional condition on the kernel to 
  \begin{align}
      \label{eq:Norris_cond}
       K(x,y)\omega(x)^{-1}\omega(y)^{-1}\to 0 \quad\text{as}\quad (x,y)\to\infty,
  \end{align}
   yielding the most general global existence result for one-component measure-valued solutions we are aware of (see Theorem 4.1 in  \cite{Norris}).
By relying on topological tools in uniform spaces, we are able to develop compactness arguments in the space of time-continuous measures, which allow us to construct a global solution under very minimal assumptions on the initial data and on the kernel. Thus, in addition to extending the existence result in \cite[Theorem 4.1]{Norris} by removing some technical conditions  on the kernel, including \eqref{eq:Norris_cond}, and by allowing for vector-valued size variables, our techniques also allow us to relax the assumptions on the initial data used in \cite{Norris2000},
 only requiring the condition \eqref{eq:ini_cond}. 
Our proof also gives global existence for gelling kernels, extending the results in \cite{Norris2000}, where existence was obtained only up until the gelation time. 
As a Corollary we also obtain existence of time differentiable strong solutions to the discrete coagulation equation \eqref{eq:B4_discrete}.

Moreover, Norris proves mass-conservation for solutions with finite $\omega^2$ moment in \cite{Norris,Norris2000} if there exists $\epsilon>0$ such that $\omega(x)\ge\epsilon x$ for all $x>0$, and so, obtains existence of mass-conserving (global) measure-valued solutions  with the additional assumption $K(x,y)\leq\omega(x)+\omega(y)$ on the kernel. Therefore, our existence and mass-conservation results hold for a strictly larger class of kernels and initial data.

Finally, we remark that
multicomponent systems contain also mathematically interesting phenomena which
are not encountered in the one-component case. The localization considered
in \cite{localization} is an example of such phenomenon. See also the survey \cite{chp} where localization is shown in explicit solutions. 
Our results imply the existence of mass-conserving solutions whose construction had only been outlined in \cite{localization}.
Combining the uniqueness result in \cite{ThromUniqueness} and the time estimates in \cite{localization} with our results further completes the picture of well-posedness and long-time localization for the multicomponent Smoluchowski coagulation equation for a general kernel and initial data (see Section \ref{sec:localization} for the precise statement and class of kernels).

\subsection{Notations}

We collect here the main notations used in this paper. 

We denote size space of particle compositions by $\sd =
[0,\infty)^d\setminus\{(0,\dots,0)\}$. We will use $|\cdot|$ to denote the $\ell^1$-norm 
of $\R^d$, namely,
\begin{align*}
    \label{eq:}
    |x|=\sum_{j=1}^d|x_j|, \quad \text{for } x\in\R^d.
\end{align*}
The set of positive Radon measures on $\sd$ is denoted by $\M_+(\sd)$ and the set 
of signed bounded Radon measures on $\sd$ is denoted by $\M_b(\sd)$. Then
$\M_{+,b}(\sd):=\M_+(\sd)\cap\M_b(\sd)$ is the set of bounded positive Radon measures 
on $\sd$. The bounded Radon measures $\M_b(\sd)$ is a Banach space with respect to the total variation norm which we denote by $\|\cdot\|$.

The set of continuous functions from a topological space $X$ to a topological space $Y$ is denoted by $C(X;Y)$. The shorthand notation $C(X):=C(X,\R)$ will be used. The support of a given function $\varphi:X\to\R$ is denoted by $\supp\varphi$. The set of functions in $C(X)$ with compact support is denoted by $C_c(X)$. Note that the bounded functions in $C(X)$ is a Banach space with respect to the supremum norm $\|\varphi\|_\infty=\sup_{x\in X}|\varphi(x)|$. The closure of $C_c(X)$ in the Banach space of bounded and continuous functions is denoted by $C_0(X)$. The topology used in both $C_c(X)$ and $C_0(X)$ is the topology induced by the supremum norm $\|\cdot\|_\infty$. The set of functions $\varphi\colon\sd\times[0,\infty)\to\R$ such that for each fixed $x\in\sd$ the function $\varphi(x,\cdot)$ is continuously differentiable and $\varphi(\cdot,t)\in C_c(\sd)$ for all $t\in[0,\infty)$ is denoted by $C^1([0,\infty);C_c(\sd))$.

If $\omega\in C(\sd;[0,\infty))$ and $\mu\in\M_+(\sd)$, we define
the weighted measure $\omega\mu\in\M_+(X)$ to be the unique positive Radon measure
obtained from the Riesz Representation Theorem
with the functional mapping each compactly supported test function $\varphi\in
C_c(\sd)$ into $\int_X\varphi(x)\omega(x)\mu(dx)$. Moreover, if
$f:[0,\infty)\to\M_+(\sd)$, then $\omega f:[0,\infty)\to\M_+(\sd)$ is defined
to map each $t\in[0,\infty)$ into the weighted measure $\omega f(\cdot,t)$.

Note that due to the Riesz Representation Theorem $\M_+(\sd)$ is the dual of $C_c(\sd)$, and similarly by the Riesz-Markov-Kakutani Representation Theorem $\M_b(\sd)$ is the dual of $C_0(\sd)$. In both cases we use the dual pairing notation $\langle \varphi, \mu\rangle=\int_\sd \varphi(x)\mu(dx)$. Due to these Theorems we will some times treat measures as functionals and functionals as measures, e.g., use the same letter $\mu$ for the measure and for its corresponding functional $\varphi\mapsto \langle \varphi,\mu\rangle$.

In the set of signed bounded Radon measures $\M_b(\sd)$ we will some times use the topology induced by the total variation norm $\|\cdot\|$, and some times the weak*-topology from the dual of $C_0(\sd)$, i.e., the weakest topology in $\M_b(\sd)$ which makes $\mu\mapsto\langle \phi,\mu\rangle$ continuous for each $\phi\in C_0(\sd)$. We will always specify which topology is used.

For $I=[0,\infty)$ or $I=[0,T]$ for some $T>0$ we define $C^1(I;\M_b(\sd))$ as the set containing all those $f\in C(I;\M_b(\sd))$ for which there exists $\dot{f}\in C(I;\M_b(\sd)$ such that 
\begin{align*}
    \lim_{h\to0}\|h^{-1}\dot{f}(t)-(f(t+h)-f(t))\|=0
\end{align*}
for all $t$ in the interior of $I$, where $\M_b(\sd)$ is endowed with the topology induced by the total variation norm $\|\cdot\|$. The measure-valued function $\dot{f}$ is called the time derivative of $f$.

In subsection \ref{sec:stepIII} uniform spaces will be needed and necessary notations and definitions will be discussed therein. 

\subsection{Main results}
We list here the main results of the paper, namely, the existence Theorem \ref{thm:existence},
mass-conservation Theorem \ref{thm:mass_conservation}, and the gelation Theorem \ref{thm:gel}. 

\begin{theorem}
    \label{thm:existence}
    \textup{(Existence)}
    Suppose that $K$ is as in \eqref{eq:condK_cont} and
    satisfies  the upper bound 
    \eqref{eq:condK_sym2} with $-\beta\in\R$, and
    $\gamma,\lambda\in \R^2$ satisfying $-\lambda_j\leq\gamma_j+\lambda_j$ and  $\gamma_j+\lambda_j<1$ for $j=1,2$. Suppose
    that  a given initial data $f_0 \in \mathcal{M}_+(\sd)$ satisfies
    \eqref{eq:ini_cond}
  for some $r>0$.
    Then, there exists a weak
    solution $f:[0,\infty)\to\M_+(\sd)$ to
    \eqref{B4} in the sense of definition \ref{def:time-dep-sol-thesis} with
    $f(0,\cdot)=f_{0}$.
\end{theorem}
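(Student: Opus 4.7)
The plan is the classical four-step scheme tailored to the measure-valued setting: regularize both the kernel and the initial datum, solve the regularized equation in the Banach space $\M_b(\sd)$, derive a priori estimates uniform in the regularization parameter, and then extract a convergent subsequence using the Arzel\`a--Ascoli framework for uniform spaces advertised in the introduction. The strict inequality $\gamma_j+\lambda_j<1$ and the extra $r>0$ of negative-moment integrability in \eqref{eq:ini_cond} are what will make the various estimates close.

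First, for $\epsilon\in(0,1)$ I would take $\feo$ to be the restriction of $f_0$ to the set $\trunc$, which belongs to $\M_{+,b}(\sd)$ and has support bounded away from the origin and from infinity, and define the capped kernel $K_\epsilon(x,y)=K(x,y)$ if both $x$ and $y$ lie in $\trunc$ and $K_\epsilon(x,y)=0$ otherwise, which is bounded and continuous. The approximate Smoluchowski equation then has a quadratic right-hand side that is globally Lipschitz on $\M_b(\sd)$ with respect to the total variation norm, so a Banach fixed-point argument in $(\M_b(\sd),\|\cdot\|)$ yields a unique $\fe\in C^1([0,\infty);\M_b(\sd))$ supported in $\{|x|\leq 2/\epsilon\}$ with initial datum $\feo$. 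Mass of $\fe$ is preserved because the support stays compact and $K_\epsilon$ is bounded.

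Next, I would establish three uniform-in-$\epsilon$ a priori bounds. Mass conservation of $\fe$ gives $\int|x|\,\fe(dx,t)\leq\int|x|f_0(dx)$. Testing the equation against $\varphi(x)=|x|^s$ with $s=\min(-\beta,-\lambda_1)-r<0$ and using the piecewise bound \eqref{eq:condK_sym2} produces a differential inequality for $\int|x|^s\,\fe(dx,t)$; the sub-unit homogeneity $\gamma_j+\lambda_j<1$ on each large region is crucial here, since after symmetrizing the coagulation integrand the produced power laws must be absorbed either into the first moment or into the $s$-th moment itself, and strict sub-unit homogeneity provides the small gap needed to do so, giving a Gronwall bound on $\int|x|^s\,\fe(dx,t)$ depending only on $f_0$ and $T$. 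Finally, combining these two moment bounds with \eqref{eq:condK_sym2} shows that for every $\varphi\in C_c(\sd)$ the map $t\mapsto\langle\varphi,\fe(t)\rangle$ is Lipschitz in $t$ with an $\epsilon$-uniform constant on every compact time interval.

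The first moment bound controls mass at infinity and the negative moment bound controls mass near the origin, so for each fixed $t$ the family $\{\fe(t)\}_\epsilon$ is relatively compact in $\M_+(\sd)$ for the vague topology. Together with the pointwise equicontinuity in $t$ from the previous step, the Arzel\`a--Ascoli theorem in the uniform-space framework should produce a subsequence $\fen$ converging in $C([0,\infty);\M_+(\sd))$ to some limit $f$ uniformly on compact time intervals. In the weak formulation, the linear sink term passes to the limit directly. The main obstacle is the quadratic gain term, which is an integral of $K(x,y)\varphi(x+y)$ against $\fen\otimes\fen$ and is generally neither compactly supported nor bounded, so one cannot simply exploit vague convergence. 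I would split the domain of integration into a compact subset bounded away from the coordinate axes, where vague convergence of the product measures is sufficient, plus a remainder near $\{|x|=0\}\cup\{|y|=0\}\cup\{|x|+|y|=\infty\}$ whose contribution is shown to be uniformly small by inserting \eqref{eq:condK_sym2} and using the uniform first and negative-moment bounds. The additional $r>0$ of negative integrability and the strict inequality $\gamma_j+\lambda_j<1$ provide precisely the slack needed to close this tail estimate, and this compactness-versus-singularity trade-off is where I expect the bulk of the technical work to lie.
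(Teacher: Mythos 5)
Your overall scheme is the same as the paper's: truncate the data to $\trunc$, solve a regularized problem by a fixed-point argument, derive $\epsilon$-uniform moment bounds, apply an Arzel\`a--Ascoli argument in the uniform-space setting, and pass to the limit in the weak form by splitting the quadratic term into a compact core plus tails controlled by the extra $r$ and by $\gamma_j+\lambda_j<1$. The one genuine gap is in your Step 2. A quadratic map on $\M_b(\sd)$ is only \emph{locally} Lipschitz, not globally, so Banach's theorem does not directly produce a global-in-time solution; more importantly, a fixed point obtained in the Banach space of \emph{signed} measures is not automatically a nonnegative measure, and nonnegativity is indispensable for everything downstream (all moment estimates and the very notion of a measure-valued solution). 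The paper handles both points by using the Duhamel/exponential (mild) formulation \eqref{eq:fixed_point_op}, which is manifestly positivity-preserving, running the contraction on a short interval of length determined by $\|K\|_\epsilon$ and $\|\feo\|$ (Proposition \ref{prop:unique_weak_sol_T}), and then gluing, using the a priori bound \eqref{fEstim}. Your variant of truncating the kernel instead of cutting the gain term with $\zeta_\epsilon$ is legitimate, but note the support is then only contained in $\{|x|\le 4/\epsilon\}$, and you still need the mild formulation (or an equivalent device) to secure positivity.

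Two further points are inaccuracies rather than fatal errors. First, the negative-moment estimate needs neither Gr\"onwall nor $\gamma_j+\lambda_j<1$: for any $\alpha\le 1$ one has $|x+y|^\alpha\le|x|^\alpha+|y|^\alpha$, so testing with $|x|^\alpha$ makes the coagulation contribution nonpositive and $t\mapsto\int_{\sd}|x|^\alpha \fe(dx,t)$ is simply non-increasing (Proposition \ref{prop:trunc_stuff_conservation}); the strict inequality enters only where you also (correctly) place it, in the large-size tail of the limit passage, where one needs $\max(\gamma+\lambda)+r'\le 1$ for some $r'>0$. Second, the unweighted family $\{\fe(t)\}$ need not be bounded in total variation, since \eqref{eq:ini_cond} allows $f_0$ to have infinite total mass near the origin; your vague-topology compactness still works because only local bounds are needed, but the paper instead compactifies the weighted family $\omega\fe$ in the weak-* topology of $\M_b(\sd)=C_0(\sd)^*$, which also delivers directly the continuity condition \eqref{eq:continuity} required by Definition \ref{def:time-dep-sol-thesis}. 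On your route you would still have to upgrade vague continuity of $f$ to weak-* continuity of $\omega f$ and verify \eqref{eq:bound_gamma_moment} and \eqref{eq:conservation} for the limit via a cutoff/monotone-convergence argument, as in Propositions \ref{prop:candidate_initial} and \ref{prop:candidate_bounds}.
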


The above Theorem gives existence for weak solutions. We will also show that the following Corollary \eqref{cor:discrete_existence} holds, which gives the existence of strong solutions for the discrete coagulation equation \eqref{eq:B4_discrete}.

\begin{corollary}
    \label{cor:discrete_existence}
    \textup{(Existence discrete)} Suppose that $K$ is as in Theorem \ref{thm:existence}. Suppose that a given (discrete) initial data $n_0:\dsd\to[0,\infty)$ satisfies 
     \begin{equation}
       \label{eq:discrete_ini_cond}%
       \sum_{\alpha>0}|\alpha|n_0(\alpha)<\infty.
    \end{equation}
    Then there exists a strong solution $n:\dsd\times[0,\infty)\to[0,\infty)$ to the discrete 
    coagulation equation  \eqref{eq:B4_discrete} with $n(\cdot,t)=n_0$, namely, $t\mapsto n(\alpha,0)$ is continuously differentiable for all $\alpha\in\dsd$ and $n(\alpha,t)$ satisfies \eqref{eq:B4_discrete} for all $t\ge0$ and $\alpha\in\dsd$. Moreover, 
    \begin{align}
       \sum_{\alpha>0}
       \alpha n(\alpha,t)
       \leq
       \sum_{\alpha>0}
       \alpha n_0(\alpha),
       \quad 
       \text{and }
       \quad 
       \sup_{s\in[0,t]}
       \sum_{\alpha>0}
       \omega(\alpha) n(\alpha,s)
       <\infty 
    \end{align}
    for all $t\ge0$, where $\omega$ is  defined in \eqref{eq:weight}.
\end{corollary}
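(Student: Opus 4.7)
The strategy is to reduce Corollary \ref{cor:discrete_existence} to Theorem \ref{thm:existence} by lifting the discrete datum to the lattice-supported measure $f_0(dx) := \sum_{\alpha\in\dsd} n_0(\alpha)\,\delta_\alpha(dx) \in \M_+(\sd)$. Since every $\alpha\in\dsd$ satisfies $|\alpha|\ge 1$, the singular weight $|x|^{\min(-\beta,-\lambda_1)-r}$ is bounded by $1$ on $\supp f_0$, and $\int_\sd |x|\,f_0(dx)=\sum_\alpha |\alpha| n_0(\alpha)<\infty$ by \eqref{eq:discrete_ini_cond}. Hence \eqref{eq:ini_cond} holds for any $r>0$, and Theorem \ref{thm:existence} delivers a weak solution $f\colon[0,\infty)\to\M_+(\sd)$ with $f(0)=f_0$.

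The main technical step is to show that the discrete structure is preserved, namely $\supp f(t)\subseteq \dsd$ for every $t\ge 0$. I would establish this by revisiting the approximation scheme used to prove Theorem \ref{thm:existence}: truncating the kernel and the initial datum to a finite lattice $\{\alpha\in\dsd:|\alpha|\le R\}$ reduces the approximating equation to the classical finite-dimensional discrete Smoluchowski ODE system, whose solution is manifestly a finite sum of Dirac masses at integer points. Because $\dsd$ is closed in $\sd$, the property of being supported on $\dsd$ passes to the weak-$*$ limit, so $f(t)$ itself is supported on $\dsd$ and therefore has the form $f(dx,t)=\sum_{\alpha\in\dsd} n(\alpha,t)\delta_\alpha(dx)$ with $n(\alpha,t):=f(t)(\{\alpha\})\ge 0$.

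To recover the pointwise strong formulation I would test the weak equation of Definition \ref{def:time-dep-sol-thesis} against bump functions $\varphi_\alpha\in C_c(\sd)$ satisfying $\varphi_\alpha(\alpha)=1$ and $\varphi_\alpha(\beta)=0$ for every $\beta\in\dsd\setminus\{\alpha\}$; such bump functions exist because $\dsd$ is locally finite in $\sd$. Then $\langle\varphi_\alpha,f(t)\rangle=n(\alpha,t)$, and the weak formulation becomes
\begin{align*}
    n(\alpha,t) - n(\alpha,0) = \int_0^t \frac{1}{2}\iint K(x,y)\bigl[\varphi_\alpha(x+y)-\varphi_\alpha(x)-\varphi_\alpha(y)\bigr] f(s,dx)\,f(s,dy)\,ds.
\end{align*}
Since $f(s)$ is supported on $\dsd$, the double integral collapses to the standard discrete coagulation operator applied to $n(\cdot,s)$, reproducing the right-hand side of \eqref{eq:B4_discrete}. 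Continuity in $s$ of that right-hand side, inherited from the moment bounds on $f$ produced along the construction, upgrades the integral identity to continuous differentiability of $t\mapsto n(\alpha,t)$ with the prescribed derivative. The moment inequalities stated for $n$ transfer directly from the corresponding total-variation and first-moment bounds on $f$.

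The hard part will be the discrete-support preservation in the second step. If the approximation scheme in the proof of Theorem \ref{thm:existence} is not manifestly lattice-supported, I would run a parallel lattice-supported approximation (finite truncated discrete Smoluchowski systems, whose well-posedness is classical) and verify that the compactness and convergence arguments of Theorem \ref{thm:existence} go through verbatim when specialized to these lattice-supported approximants, so that the resulting limit is still a weak solution produced by the general existence scheme.
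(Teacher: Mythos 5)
Your proposal is correct in substance, but it takes a genuinely different route from the paper at the key step. The paper does not revisit the approximation scheme at all: it proves a standalone statement (Proposition \ref{prop:solution_stays_discrete}) that \emph{any} weak solution in the sense of Definition \ref{def:time-dep-sol-thesis} with lattice-supported initial data stays lattice-supported, by an a posteriori test-function argument --- first using $\phi(x)=|x|\ind(|x|<1)$ (a valid test function by Proposition \ref{prop:valid_test_functions}), for which the coagulation bracket is nonpositive and the initial moment vanishes, to conclude $f(\{|x|<1\},t)=0$, and then an induction with $\phi_n(x)=\ind(|x|<n)\ind(x\notin\dsd)$ to exclude mass at non-lattice points; the pointwise discrete equation and the moment bounds are then extracted exactly as in your third step. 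The corollary follows by lifting $n_0$ to $f_0=\sum_\alpha n_0(\alpha)\delta_\alpha$ and invoking Theorem \ref{thm:existence}, just as you do. Your alternative --- propagating lattice support through the regularized problems and passing it to the weak-$*$ limit (support in the closed set $\dsd$ is stable under the convergence \eqref{eq:family_converges}, and multiplication by the continuous positive weight $\omega$ does not change supports) --- also works, and is in fact easy to implement without a ``parallel scheme'': for lattice initial data the truncated datum $f_{\epsilon,0}$ is a finite sum of Dirac masses, the $\zeta_\epsilon$-weighted finite lattice ODE system produces a regularized solution in the sense of Definition \ref{def:requ_solu}, and uniqueness (Theorem \ref{thm:regularized_existence_uniqueness}) forces $f_\epsilon$ to coincide with it. The trade-off: your construction-based argument only shows that \emph{the constructed} solution is discrete (which suffices for the corollary), while the paper's argument is construction-independent and shows every weak solution with discrete data is discrete, which is why the paper can state the corollary as an immediate consequence. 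One small imprecision in your step 1: the weight $|x|^{\min(-\beta,-\lambda_1)-r}$ need not be bounded by $1$ on $\dsd$, since $\min(-\beta,-\lambda_1)-r$ may be positive; what saves you is that $-\lambda_1\le\gamma_1+\lambda_1<1$ forces this exponent to be $<1$, so on $|x|\ge1$ the weight is dominated by $|x|$ and \eqref{eq:ini_cond} indeed follows from \eqref{eq:discrete_ini_cond}.
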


\begin{theorem}
    \label{thm:mass_conservation}
    \textup{(Conservation of mass)}
    Suppose that $K$ is as in \eqref{eq:condK_cont} and satisfies the upper
    bound \eqref{eq:condK_sym2} with $-\beta\in\R$, and
    $\gamma,\lambda\in \R^2$ satisfying $-\lambda_j\leq\gamma_j+\lambda_j$ and  $\gamma_j+\lambda_j\leq1$ for $j=1,2$. Suppose
    that $f_0\in\M_+(\sd)$ satisfies the moment condition \eqref{CondInVal} and that
    $f:[0,\infty)\to\M_+(\sd)$ is any weak solution to \eqref{B4} with the
    initial data $f_0$ in the sense of definition
    \ref{def:time-dep-sol-thesis}. If $\gamma_2\leq 1$,
    then 
    \begin{align}
        \label{eq:mass_conservation}
        \int_{\sd}xf(dx,t)=\int_{\sd}xf_0(dx) 
    \end{align}
    for all $t\ge0$.
\end{theorem}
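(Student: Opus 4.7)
The plan is to test the weak formulation with a truncated first moment and let the truncation go to infinity. Fix a coordinate $k\in\{1,\dots,d\}$ and choose a concave, non-increasing $\chi\in C^\infty([0,\infty);[0,1])$ with $\chi\equiv 1$ on $[0,1]$ and $\chi\equiv 0$ on $[2,\infty)$; set $\chi_R(x)=\chi(|x|/R)$ and $\varphi_R(x)=x_k\chi_R(x)$. Strictly $\varphi_R\notin C_c(\sd)$ because its support is not bounded away from the origin, so to fit Definition~\ref{def:time-dep-sol-thesis} I would first multiply by a smooth lower cutoff on $\{|x|\leq\epsilon\}$ and let $\epsilon\to 0$, using the $r=0$ condition in \eqref{eq:ini_cond} to control $f(\cdot,s)$ near $0$ uniformly on $[0,T]$. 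Suppressing this technicality, the weak formulation yields
\begin{align*}
\int_\sd x_k\chi_R(x)\,f(dx,t)-\int_\sd x_k\chi_R(x)\,f_0(dx)=\tfrac12\int_0^t\!\int\!\int\Phi_R(x,y)K(x,y)\,f(dx,s)f(dy,s)\,ds,
\end{align*}
with $\Phi_R(x,y):=\varphi_R(x+y)-\varphi_R(x)-\varphi_R(y)$. Using $(x+y)_k=x_k+y_k$, $\Phi_R=x_k[\chi_R(x+y)-\chi_R(x)]+y_k[\chi_R(x+y)-\chi_R(y)]$, so the Lipschitz estimate $|\chi_R(z_1)-\chi_R(z_2)|\leq R^{-1}\|\chi'\|_\infty|z_1-z_2|$ gives $|\Phi_R|\leq 2R^{-1}\|\chi'\|_\infty|x||y|$, while $0\leq\chi_R\leq 1$ gives $|\Phi_R|\leq 2(|x|+|y|)$; moreover $\Phi_R=0$ on $\{|x|+|y|\leq R\}$ since $|x+y|=|x|+|y|$ in the $\ell^1$ norm on $\sd$. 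Combining these bounds with the symmetry $|y|\leq|x|$ gives $|\Phi_R(x,y)|\leq C\min(|x|,|y|)\mathbb{1}_{\{|x|+|y|\geq R\}}$.

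The crux is to establish $\int_0^T\!\int\!\int\min(|x|,|y|)K(x,y)f(dx,s)f(dy,s)\,ds<\infty$, so that dominated convergence forces the right-hand side of the weak identity to vanish as $R\to\infty$. For $|y|\leq|x|$, I split into the three regions of \eqref{eq:condK_sym2}. In the large--large region ($|x|,|y|\geq 1$), the joint hypotheses $\gamma_2\leq 1$ and $\gamma_2+\lambda_2\leq 1$ give $\max(\gamma_2,\gamma_2+\lambda_2)\leq 1$, so absorbing $|y|^{-\lambda_2}\leq|x|^{-\lambda_2}$ (when $\lambda_2<0$) or $|y|^{1-\lambda_2}\leq|y|$ (when $\lambda_2\geq 0$) yields $|y|K(x,y)\leq c_2|x||y|$; in the large--small region ($|y|\leq 1\leq|x|$), $|y|^{1-\lambda_1}\leq|y|^{-\lambda_1}$ together with $\gamma_1+\lambda_1\leq 1$ gives $|y|K\leq c_2|x||y|^{-\lambda_1}$; in the small--small region, $|y|K\leq c_2|x|^{-\beta}|y|^{-\beta}$. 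Each piece is integrable against $f(\cdot,s)\otimes f(\cdot,s)$ uniformly in $s\in[0,T]$ once the moments $\int|x|f(\cdot,s)$, $\int_{|x|\leq 1}|x|^{-\lambda_1}f(\cdot,s)$ and $\int_{|x|\leq 1}|x|^{-\beta}f(\cdot,s)$ are uniformly bounded. The first-moment bound follows from the concavity of $\chi$: this makes $\varphi_R$ subadditive and $\Phi_R\leq 0$, so the weak identity forces $\int x_k\chi_R f(\cdot,t)\leq\int x_k\chi_R f_0$, and monotone convergence (using $\chi_R\uparrow 1$) yields $\int|x|f(\cdot,t)\leq\int|x|f_0<\infty$. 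The negative moments are non-increasing under coagulation, since $|x+y|^{-p}\leq|x|^{-p}$ for $p>0$ renders the corresponding integrand nonpositive in an analogous weak identity for $x\mapsto|x|^{-p}$.

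With these bounds, dominated convergence in $(s,x,y)$ drives the right-hand side of the weak identity to $0$ as $R\to\infty$, while the left-hand side converges to $\int x_k f(dx,t)-\int x_k f_0(dx)$ by dominated convergence with the $f$-integrable dominator $x_k$. Since this holds for each $k\in\{1,\dots,d\}$, one obtains the vector identity \eqref{eq:mass_conservation}. The main obstacle I expect is the uniform-in-time propagation of the negative moments near the origin and the legitimate use of $\varphi_R\notin C_c(\sd)$ as a test function; both rely on the condition $r=0$ in \eqref{eq:ini_cond} together with further truncation arguments parallel to the one above.
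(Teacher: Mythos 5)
Your core argument is essentially the paper's proof: truncate the coordinate function at large sizes, use a uniform Lipschitz-type bound for the truncated test functions, split the kernel into the three regions of \eqref{eq:condK_sym2} using $\gamma_2\le 1$ and $\gamma_j+\lambda_j\le 1$ exactly as you do, and pass to the limit by dominated convergence; your kernel estimates coincide with those in \eqref{eq:estimate_mass_conservation2}--\eqref{eq:estimate_mass_conservation4}.

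Two remarks. First, the ``main obstacle'' you flag --- uniform-in-time control of the first moment and of the negative moments near the origin --- requires no derivation at all: it is built into Definition \ref{def:time-dep-sol-thesis}, since condition \eqref{eq:bound_gamma_moment} with the weight \eqref{eq:weight} bounds $\sup_{s\in[0,t]}\int_{\{|x|\le1\}}\bigl(|x|^{-\beta}+|x|^{-\lambda_1}\bigr)f(dx,s)$, and condition \eqref{eq:conservation} bounds the mass. Your proposed substitute for this --- an ``analogous weak identity for $x\mapsto|x|^{-p}$'' showing negative moments are non-increasing --- is not justified as stated: that function is unbounded near the origin and is not an admissible test function (it satisfies neither of the two bounds required in Proposition \ref{prop:valid_test_functions}), so the weak formulation is not known to hold for it; moreover, \eqref{eq:ini_cond} with $r=0$ constrains only $f_0$, not $f(\cdot,s)$ for $s>0$. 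Dropping this detour and simply citing conditions (c)--(d) of the definition makes your argument complete. Second, the lower-cutoff issue you suppress is handled in the paper by truncating the test functions near the origin as well (the functions $\varphi_k(x)=x_j\eta_1(k|x|)\eta_2(|x|/k)$ lie in $C_c(\sd)$ and have a $k$-uniform Lipschitz constant); alternatively, your $x_k\chi_R$ is already admissible by Proposition \ref{prop:valid_test_functions}, being bounded and dominated by $|x|$.
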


Note that both the diffusion \eqref{eq:diffusion} and ballistic \eqref{eq:ballistic} kernels 
as well as their linear combinations satisfy the conditions of the Theorems \ref{thm:existence}
and \ref{thm:mass_conservation}. 

\begin{theorem}
    \label{thm:gel}
    \textup{(Gelation)}
    Suppose that $K$ is as in \eqref{eq:condK_cont} and satisfies the upper
    bound \eqref{eq:condK_sym2} with $-\beta\in\R$, and
    $\gamma,\lambda\in \R^2$ satisfying $-\lambda_j\leq\gamma_j+\lambda_j$ and  $\gamma_j+\lambda_j\leq1$ for $j=1,2$, and the lower bound \ref{eq:lower} with $\ggel\in(1,2)$ and $-\pgel\leq \ggel+\pgel$. Suppose
    that $f_0\in\M_+(\sd)$ satisfies the moment condition \eqref{CondInVal}  and that
    $f:[0,\infty)\to\M_+(\sd)$ is a weak solution to \eqref{B4} with the
    initial data $f_0$ in the sense of definition
    \ref{def:time-dep-sol-thesis}. If $\int_\sd |x|f_0(dx)\ne 0$,  then gelation occurs in finite time, namely, there exists $T_*\in(0,\infty)$ such that 
    \begin{align*}
        \int_\sd |x|f(dx,t)<\int_\sd |x|f_0(dx) \quad \forall t\ge T_*.
    \end{align*}
\end{theorem}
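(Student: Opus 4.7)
The plan is to argue by contradiction: assuming mass is conserved, $M_1(t) := \int_{\sd} |x|\, f(dx, t) = M_1(0) =: \rho > 0$ for every $t \geq 0$, the super-linear coagulation enforced by \eqref{eq:lower} (with $\ggel > 1$) will force a certain truncated first moment to decay at a uniform-in-time rate, eventually becoming negative. This is the one-component argument of \cite{Dust}; the adaptation to the multicomponent setting is essentially notational since $|x+y|=|x|+|y|$ in the positive orthant $\sd$.

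First I would set up the dissipation estimate by testing the weak formulation of \eqref{B4} against (a compactly supported $C^1$ regularisation of) the concave subadditive truncation $\zeta_R(x) := \min(|x|, R)$, for a parameter $R > 1$ to be chosen later. The symmetric defect $G_R(x, y) := \zeta_R(x) + \zeta_R(y) - \zeta_R(x+y)$ is non-negative (by subadditivity of $\zeta_R$), and by direct case analysis equals $R$ on $\{|x|, |y| \geq R\}$ and $|y|$ on $\{|y| \leq R \leq |x|\}$ (and symmetrically). Passing to the limit in the regularisation yields
\begin{align*}
    \int_{\sd} \zeta_R\, f(dx, t) + \frac{1}{2} \int_0^t \iint G_R(x, y) K(x, y)\, f(dx, s) f(dy, s)\, ds \leq \int_{\sd} \zeta_R\, f_0(dx) \leq \rho.
\end{align*}

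Next I would combine the lower bound \eqref{eq:lower} with the hypothetical mass conservation to bound the dissipation integrand below uniformly in $s$. Using $M_0(t) \leq M_0(0)$ (obtained from the existence argument by testing with truncations of $\phi \equiv 1$), for any $R < \rho / M_0(0)$ one has $\int_{|x| \geq R} |x|\, f(dx, t) \geq \rho - R M_0(0) =: \delta_R > 0$. Combining this tail bound with either the AM--GM form $K(x,y) \geq 2 c_1 (|x||y|)^{\ggel/2}$ on the diagonal region $\{|x|,|y|\geq R\}$, or with the asymmetric form $K(x,y) \geq c_1 |x|^{\ggel+\pgel}|y|^{-\pgel}$ (valid for $|y| \leq |x|$) on the off-diagonal region $\{|y| \leq R \leq |x|\}$, and applying Cauchy--Schwarz to convert $\delta_R$ into a lower bound on the relevant moments, one obtains a uniform lower bound $\iint G_R K f\otimes f \geq C(R, \rho, M_0(0), \ggel, \pgel) > 0$ for all $s \in [0, T]$. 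Inserting this back gives $0 \leq \rho - \tfrac{t}{2} C$, which fails for $t > T_\ast := 2\rho/C$; this is the required contradiction and provides an explicit upper bound on the gelation time.

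The delicate step is establishing the uniform positive lower bound on the dissipation integrand. Because $\ggel/2 \in (1/2, 1)$, the AM--GM-based argument alone is insufficient: the moment $\int_{|x| \geq R} |x|^{\ggel/2} f$ that it produces cannot be bounded below by $\delta_R$, since mass escaping to infinity would make it vanish. The remedy is to exploit the asymmetric bound on the off-diagonal region $\{|y|\leq R\leq|x|\}$ (where $G_R = |y|$), producing the factorised lower bound $\int_{|y| \leq R} |y|^{1-\pgel} f \cdot \int_{|x| \geq R} |x|^{\ggel+\pgel} f$: the first factor is controlled using the initial moment condition \eqref{CondInVal} and propagation of small-$|y|$ moments (recalling that $\pgel \geq -\ggel/2$ constrains the relevant exponent), while the second is controlled using $\ggel+\pgel > 0$ together with mass conservation. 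The upper bound \eqref{eq:condK_sym2} enters only implicitly, in providing the a priori moment estimates that justify these manipulations.
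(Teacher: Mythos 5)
Your overall strategy (contradiction from a dissipation inequality obtained with concave truncations) starts out like the paper's, but the core step of your argument has a genuine gap: the uniform-in-time positive lower bound on the dissipation integrand $\iint G_R K\, f\otimes f$ for a \emph{single fixed} truncation level $R$ is not available. Under the assumed mass conservation the only quantity you control from below is the tail first moment $\int_{\{|x|\ge R\}}|x|f(dx,s)$. Your off-diagonal remedy replaces it by the product $\int_{\{|y|\le R\}}|y|^{1-\pgel}f(dy,s)\cdot\int_{\{|x|\ge R\}}|x|^{\ggel+\pgel}f(dx,s)$, but neither factor admits a uniform lower bound: the exponent $\ggel+\pgel$ may well be strictly less than $1$ (only $-\pgel\le\ggel+\pgel$ is assumed), so the second factor suffers from exactly the ``mass escapes to infinity'' problem you yourself identified for the diagonal term; and the first factor can tend to $0$ as $s\to\infty$ because the small-size region gets depleted (already for monodisperse discrete data all of $\int_{\{|y|\le R\}}|y|^{1-\pgel}f(dy,s)$ decays), so the product may be integrable in time and no contradiction with your inequality $\tfrac12\int_0^t\iint G_RK\,f\otimes f\,ds\le\rho$ follows. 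A secondary problem: your bound $\int_{\{|x|<R\}}|x|f\le R\,M_0(0)$ needs $M_0(0)=\int f_0(dx)<\infty$, which \eqref{CondInVal} does not guarantee (and $\phi\equiv1$ is not admissible in Proposition \ref{prop:valid_test_functions}, which requires $\phi(x)\le c|x|$); the paper instead propagates $\int_{\{|x|\le R\}}|x|f(dx,t)\le\int_{\{|x|\le R\}}|x|f_0(dx)$ directly with the test function $|x|\ind(|x|\le R)$.

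The missing idea, which is how the paper proceeds, is to use the whole one-parameter family of truncations $x\mapsto\min(|x|,A)$ and integrate over the truncation level $A$ against a weight $\Phi'(A)A^{-1/2}$ (Proposition \ref{prop:posteriori_upper_bound}, following \cite{Gelation}). This upgrades the superlinear lower bound \eqref{eq:lower} into an $L^2$-in-time estimate on $\int f(dx,t)\,|x|^{\ggel/2}\Phi(|x|)$, and with $\Phi(A)=\max(0,A^{1-\ggel/2}-(R/2)^{1-\ggel/2})$ one obtains Corollary \ref{cor:L2}: the \emph{tail first moment} $\int_{\{|x|\ge R\}}|x|f(dx,t)$ is square-integrable over $[0,\infty)$. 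Since that is precisely the quantity which assumed mass conservation (together with the monotonicity of $\int_{\{|x|\le R\}}|x|f$) bounds below by $\tfrac12\int_\sd|x|f_0(dx)>0$ for all $t$, the contradiction is immediate. With a single fixed $R$ you can only reach sublinear tail moments, and these cannot be bounded below in time; so your proposal as written does not close, and repairing it essentially forces you back to the integrated-in-$A$ estimate of Proposition \ref{prop:posteriori_upper_bound}.
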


As mentioned in subsection \ref{sec:aim}, the proof of Theorem \eqref{thm:gel} follows from results in \cite{Dust} for $d=1$ after small changes required for the multicomponent case. 

\begin{remark}
    Note that Theorems \ref{thm:mass_conservation} and \ref{thm:gel} apply to any solution of the coagulation equation \eqref{B4} that defines a weak solution in the sense of Definition \ref{def:time-dep-sol-thesis}, including solutions to the continuous \eqref{eq:B4_continuous} and discrete \eqref{eq:B4_discrete} equations. For example, if $g : [0,\infty)\to L^1(\sd)$ is a solution to the continuous coagulation equation \eqref{eq:B4_continuous}, then $f:[0,\infty)\to\M_+(\sd)$,  $f(dx,t):= g(x,t) dx$ satisfies the weak coagulation equation \eqref{eq:thesis_weak}. If also the other conditions of Definition \ref{def:time-dep-sol-thesis} hold for this $f$, then one can use the above mentioned Theorems. Similarly for the discrete equation \eqref{eq:B4_discrete}. 
\end{remark}

\begin{remark}
    Note that in the Theorems \ref{thm:mass_conservation} and \ref{thm:gel}, we allow $\gamma_j+\lambda_j=1$ while 
    in the existence Theorem \ref{thm:existence} we require $\gamma_j+\lambda_j<1$ due to technical reasons. 
\end{remark}

\subsection{Plan of the paper and key ideas for proofs}
\label{sec:plan}

The proof of the existence Theorem \ref{thm:existence} will be the most involved of our results. In order to prove it we will proceed according to
the following four steps.
\begin{enumerate}[wide=0pt, leftmargin=\parindent, label=Step \arabic*]
    \item
        \label{step:1}
        For each $\epsilon\in(0,1)$ formulate a regularized coagulation
        equation which is more manageable.
    \item
        \label{step:2}
        Prove existence of unique regularized solution $f_\epsilon$ from each
        regularized equation labeled by $\epsilon\in(0,1)$.
    \item 
        \label{step:3}
        Prove that for the weight function $\omega$ defined in
        \eqref{eq:weight}, the closure of the weighted family $\{\omega
        f_\epsilon\}_{\epsilon\in(0,1)}$ of regularized solutions is
        sequentially compact in the topology of uniform convergence on compact
        sets.  Hence, there exist $f$ and a subsequence $(f_{\epsilon_n})$ of
        regularized solutions, such that $\omega f_{\epsilon_n}\to \omega f$ as
        $\epsilon_n\to0$.

    \item 
        \label{step:4}
        Prove that the limiting function $f$ satisfies the coagulation equation
        \eqref{B4} in the sense of definition \ref{def:time-dep-sol-thesis}.
\end{enumerate}

In section \ref{sec:regulized} we will do the \ref{step:1} and \ref{step:2}.
The proof of the existence Theorem \ref{thm:existence} is finished in section
\ref{sec:existence} where we will do the \ref{step:3} and \ref{step:4} and then combine them in subsection \ref{sec:existence_proof}. In the 
subsection \ref{sec:discrete} the Corollary \ref{cor:discrete_existence} is proved.

The
mass-conservation Theorem \ref{thm:mass_conservation} is proved in section
\ref{sec:mass}. The proof is essentially applying the Dominated Convergence Theorem to a specific sequence of test functions that are uniformly Lipschitz.
The gelation Theorem \ref{thm:gel} is proved in section \ref{sec:gel} and follows the argument from \cite[Theorem 2.4]{Dust} for $L^1$-valued solutions.  In the following section \ref{sec:def} we will give the precise
Definition \ref{def:time-dep-sol-thesis} of a weak solution to the coagulation equation \eqref{B4}.

%auto-ignore
% final sec 2
\section{Definitions and auxiliary results}
\label{sec:def}

We now give the definition of a weak solution to \eqref{B4}.
\begin{definition}                                                              
    \label{def:time-dep-sol-thesis}                                             

    Let $K$ be as in \eqref{eq:condK_cont} and satisfy the
    upper bound \eqref{eq:condK_sym2} for $\gamma,\lambda \in\R^2$ satisfying
    $-\lambda_j \leq\gamma_j+\lambda_j $ for $j=1,2$.  Let a continuous weight function
    $\omega:\sd\to(0,\infty)$ be defined by
    \begin{align}
        \label{eq:weight}
        \omega(x) =
        \begin{cases}
            |x|^{\min(-\beta,-\lambda_1 )} & \text{if } |x|\leq1,\\
            |x|^{\max(\gamma+\lambda )} & \text{if } |x|>1.
        \end{cases}
    \end{align}
    Suppose that                                                                                                                                     
    $f_{0}\in\M_{+}(\sd)$ satisfies
    \begin{equation}                                                            
        \label{CondInVal}
        \int_{\sd}|x|f_{0}(dx)                                 
        +\int_{\sd}\omega(x)f_0(dx)
        <\infty. 
    \end{equation}                                                              
    Let $\M_{b}(\sd)$ be endowed with the weak*-topology.
    A measure-valued function \\$f:[0,\infty)\to\M_{+}(\sd)$   
    is called a weak solution to \eqref{B4} with the initial data 
    $f_{0}$ if the following conditions are satisfied: 
    \begin{enumerate}[label=(\alph*)]
        \item The measure-valued function $f$ agrees with the initial data,
            \begin{align}
                \label{eq:agrees_initial}
                f(\cdot,0)=f_0.
            \end{align}
        \item
            The measure-valued function $f$ is continuous in time in the sense 
            that %\sos{This does not imply (c) because Mb is only pointwise bounded in the total variation norm}
            \begin{align}
                \label{eq:continuity}
                \omega f\in C([0,\infty);\M_b(\sd)).
            \end{align}
        \item 
            For all $t\ge0$ there holds
            \begin{equation}                                                            
                \label{eq:bound_gamma_moment}%                          
                \sup_{s\in\left[  0,t\right]  }                                         
                \int_{\sd}\omega(x)f(dx,s)       
                <\infty. 
             \end{equation}                                                              
        \item 
            The mass is not increasing, namely,
             \begin{equation}                                                            
                \label{eq:conservation}                       
                \int_{\sd}xf(dx,t)
                \leq
                \int_{\sd} xf_{0}(dx)
            \end{equation}                                                              
            for all $t\ge0$.
        \item 
            There holds
            \begin{align}                                                               
                &\int_{\sd}\varphi(x,t)f(dx,t)-\int_{\sd}\varphi(x,0)f_0(dx)          
                \nonumber                                                               
                =\int_0^t \int_{\sd}f(dx,s)\partial_s\varphi(x,s)ds                    
                \\                                                                      
                \label{eq:thesis_weak}                                                  
                &+\frac{1}{2}\int_0^t\int_{\sd}\int_{\sd}                             
                K(x,y)f(dx,s)f(dy,s)                                                    
                \left[\varphi(x+y,s)-\varphi(x,s)-\varphi(y,s)\right]ds            
            \end{align}                                                                 
            for all $t\ge0$ and for all test functions $\varphi\in
            C^1([0,\infty);C_c(\sd))$.
    \end{enumerate}
\end{definition}

Note that the time continuity condition \eqref{eq:continuity} of
$f$ implies for all $\varphi\in C_c(x)$ the continuity of $t\mapsto
\int_{\sd}\varphi(x)f(dx,t)$, since $\varphi/\omega\in C_c(x)$.

The weak formulation \eqref{eq:thesis_weak} is obtained from 
\eqref{eq:B4_continuous} by integrating over $x$ and integrating in time from $0$
to $t$.
To make sure that the conditions \eqref{CondInVal} and \eqref{eq:conservation}
guarantee that everything is well-defined in \eqref{eq:thesis_weak}, we
start by proving the following Proposition \ref{prop:weak_sol_well_defined}.

\begin{proposition}
    \label{prop:weak_sol_well_defined}
    Let $K$ be as in \eqref{eq:condK_cont} and satisfy the upper bound
    \eqref{eq:condK_sym2} with $-\lambda \leq\gamma+\lambda $. Let $\omega$ be as in
    \eqref{eq:weight} and let $f: [0,\infty)\to\M_+(\sd)$. Then
    \begin{align}
        \label{eq:weak_sol_coag_bound}
        \int_{\sd}\int_{\sd} K(x,y)f(dx,t)f(dy,t) \leq
        2c_2
        \left(
            \int_{\sd}\omega(x)f(dx,t) 
        \right)^2
    \end{align}
    for all $t\ge0$, where $c_2>0$ is the constant from the upper bound
    \eqref{eq:condK_sym2}.
\end{proposition}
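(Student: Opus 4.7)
The plan is to establish the pointwise bound $K(x,y) \leq c_2\,\omega(x)\omega(y)$ for all $x,y\in\sd$, after which Tonelli's theorem immediately yields the stated inequality (with the factor $2$ left as slack). Alternatively, one can prove the inequality only on the region $\{|y|\leq |x|\}$ and use the symmetry $K(x,y)=K(y,x)$ to double-count and recover exactly the factor $2$; both routes end in the same place.

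First, since both $K$ and $\omega(x)\omega(y)$ are symmetric in $x,y$, it suffices to work under the standing assumption $|y|\leq |x|$ from \eqref{eq:condK_sym2}. The argument then splits into the three cases there, and in each case I would compare the exponent appearing in the kernel bound to the one appearing in $\omega$, keeping in mind that $r\mapsto |z|^r$ is \emph{decreasing} in $r$ when $|z|\leq 1$ and \emph{increasing} when $|z|\geq 1$.

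For $|x|,|y|\leq 1$, the relation $\min(-\beta,-\lambda_1)\leq -\beta$ combined with $|x|,|y|\leq 1$ gives $|x|^{-\beta}\leq \omega(x)$ and $|y|^{-\beta}\leq \omega(y)$. For $|y|\leq 1\leq |x|$, the factor $|x|^{\gamma_1+\lambda_1}$ is bounded by $|x|^{\max(\gamma+\lambda)}=\omega(x)$ since $|x|\geq 1$, and the factor $|y|^{-\lambda_1}$ is bounded by $|y|^{\min(-\beta,-\lambda_1)}=\omega(y)$ since $|y|\leq 1$. For $1\leq |y|\leq |x|$, one has $|x|^{\gamma_2+\lambda_2}\leq |x|^{\max(\gamma+\lambda)}=\omega(x)$, and the control $|y|^{-\lambda_2}\leq |y|^{\max(\gamma+\lambda)}=\omega(y)$ is the only step that invokes the standing hypothesis $-\lambda_j\leq \gamma_j+\lambda_j$, through the chain $-\lambda_2\leq \gamma_2+\lambda_2\leq \max(\gamma+\lambda)$.

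With the pointwise bound in place, Tonelli's theorem applied to the product measure $f\otimes f$ yields
\[
\int_{\sd}\int_{\sd} K(x,y)\,f(dx,t)f(dy,t)\;\leq\; c_2\Bigl(\int_{\sd}\omega(x)f(dx,t)\Bigr)^{2},
\]
which is stronger than the claimed inequality by a factor of $2$. I do not expect any serious obstacle here; the only point requiring care is consistently tracking the sign-flip in the exponent comparison across the boundary $|z|=1$ and applying the hypothesis $-\lambda_j\leq \gamma_j+\lambda_j$ precisely at the large-large case, where neither $\omega$ nor the kernel bound already provides a nonnegative exponent on the smaller variable.
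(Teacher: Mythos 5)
Your proof is correct and takes essentially the same route as the paper: the paper's entire argument is the one-line pointwise bound $K(x,y)\leq 2c_2\,\omega(x)\omega(y)$ followed by integration, which you simply verify case by case (obtaining the slightly sharper constant $c_2$ in place of $2c_2$). Your case analysis, including the observation that $-\lambda_2\leq\gamma_2+\lambda_2$ is needed only in the large-large region, is accurate.
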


\begin{proof}
    This follows from $K(x,y)\leq2\omega(x)\omega(y)$.
\end{proof}

Together with the condition \eqref{eq:bound_gamma_moment}, the above Proposition 
\ref{prop:weak_sol_well_defined} imply uniform boundedness for the
time-integrand of the coagulation term in the right hand side of
\eqref{eq:thesis_weak}, i.e.
\begin{align*}
    &
    \sup_{s\in[0,t]}
    \left| 
        \int_{\sd}\int_{\sd}                             
        K(x,y)f(dx,s)f(dy,s)                                                    
        \left[\varphi(x+y,s)-\varphi(x,s)-\varphi(y,s)\right]ds     
    \right| 
    \\
    &\leq 
    6c_2
    \sup_{s\in[0,t]}
    \|\varphi(\cdot,s)\|_\infty
    \left( 
        \sup_{s\in\left[  0,t\right]  }                                         
            \int_{\sd}\omega(x)f(dx,s)       
    \right) ^2
    \\
    &<\infty.
\end{align*}
The uniform bound above, together with the
continuity of $f$ in time by condition
\eqref{eq:continuity} guarantees the
well-definedness of the coagulation term in \eqref{eq:thesis_weak}. The other
terms in \eqref{eq:thesis_weak} are well-defined by continuity of $f$,
continuous differentiability in time of the test function $\varphi$ and its
compact support in size. Note that because of continuity of the time-integrands on the left hand side of \eqref{eq:thesis_weak}, we have that $t\mapsto \int_\sd \varphi(x,t)f(dx,t)$ is continuously differentiable. 

\begin{proposition}
    \label{prop:valid_test_functions}
    \textup{(Valid test functions)}
    Let $\phi:\sd\to[0,\infty)$ be defined as the pointwise limit of some sequence $(\varphi_n)\subset C_c(\sd)$. Suppose there exists $c,C\in(0,\infty)$ such that $\phi(x)\leq c|x|$ and $\phi(x)\leq C$ for all $x\in\sd$. Then any solution $f$ to \eqref{B4} in the sense of Definition \ref{def:time-dep-sol-thesis} satisfies the coagulation equation \eqref{eq:thesis_weak} with the test function $\varphi(x,t)=\phi(x)$.
\end{proposition}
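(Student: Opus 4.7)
The plan is to apply the weak formulation \eqref{eq:thesis_weak} to a suitable sequence of $C_c$-approximations to $\phi$ and then pass to the limit via dominated convergence. Because $\varphi(x,t)=\phi(x)$ is time-independent, the $\partial_s\varphi$-term in \eqref{eq:thesis_weak} drops out, so it suffices to control the boundary integrals $\int_\sd\psi_n\,f(dx,\cdot)$ and the coagulation double integral.

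The approximating sequence is obtained by clipping $(\varphi_n)$ against the two given envelopes. Set
\[
\psi_n(x) := \min\bigl(c|x|,\,C,\,\max(\varphi_n(x),0)\bigr).
\]
Since $\varphi_n\in C_c(\sd)$ and the operations $\max(\cdot,0)$ and $\min$ preserve continuity, $\psi_n$ is continuous; and because $\psi_n$ vanishes wherever $\varphi_n\leq 0$, its support is contained in $\supp\varphi_n$, so $\psi_n\in C_c(\sd)$. By construction $0\leq\psi_n(x)\leq\min(c|x|,C)$ for every $x$, and $\psi_n\to\phi$ pointwise: for any fixed $x$, $\varphi_n(x)\to\phi(x)\in[0,\min(c|x|,C)]$, so $\max(\varphi_n(x),0)\to\phi(x)$ and the outer minima collapse to $\phi(x)$. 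Each $\psi_n$ is a legitimate (time-independent) element of $C^1([0,\infty);C_c(\sd))$, so \eqref{eq:thesis_weak} holds with test function $\psi_n$.

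Passing to the limit $n\to\infty$ then reduces to two applications of dominated convergence. For the boundary terms, the pointwise bound $\psi_n\leq c|x|$ supplies an $f$-integrable dominator, since $\int_\sd|x|f(dx,t)<\infty$ by \eqref{eq:conservation} and $\int_\sd|x|f_0(dx)<\infty$ by \eqref{CondInVal}. For the coagulation term, $\psi_n(x+y)-\psi_n(x)-\psi_n(y)\to\phi(x+y)-\phi(x)-\phi(y)$ pointwise, and the $L^\infty$ bound $|\psi_n|\leq C$ together with Proposition \ref{prop:weak_sol_well_defined} gives a uniform $(x,y)$-dominator proportional to $\omega(x)\omega(y)$, integrable against $f(dx,s)f(dy,s)$ by \eqref{eq:bound_gamma_moment}; the resulting $s$-integrand is then bounded uniformly on $[0,t]$ by a constant times $\bigl(\sup_{s\in[0,t]}\int_\sd\omega(x) f(dx,s)\bigr)^2<\infty$, so a second dominated convergence in $s$ finishes the argument. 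The main subtlety is securing the pointwise bound $\psi_n\leq\min(c|x|,C)$ rather than merely $|\psi_n|\leq C$: without the factor $c|x|$ the boundary integrals need not converge, since $f$ is not assumed to be a finite measure and only the first and $\omega$-moments are controlled.
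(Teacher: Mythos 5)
Your proof is correct and follows essentially the same route as the paper's: insert compactly supported approximants into \eqref{eq:thesis_weak} and pass to the limit by dominated convergence, using the bound $\phi\le C$ (via Proposition \ref{prop:weak_sol_well_defined}) for the coagulation term and $\phi\le c|x|$ together with \eqref{CondInVal} and \eqref{eq:conservation} for the linear terms. Your clipping $\psi_n=\min\bigl(c|x|,C,\max(\varphi_n,0)\bigr)$ is a small refinement: the paper's terse proof applies the envelopes of $\phi$ directly to the given sequence $(\varphi_n)$, whereas your construction makes the needed uniform domination of the approximants explicit.
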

\begin{proof}
    The Dominated Convergence Theorem together with the Proposition \ref{prop:weak_sol_well_defined} and the fact that $\varphi_n$ is uniformly bounded due to $\phi\leq C$ imply
    \begin{align*}
        &\lim_{n\to \infty}
        \int_{\sd}
        \int_{\sd}
        f(dx,t)f(dy,t)K(x,y)[\varphi_n(x+y)-\varphi_n(x)-\varphi_n(y)]
        \\
        =&
        \int_{\sd}
        \int_{\sd}
        f(dx,t)f(dy,t)K(x,y)[\phi_n(x+y)-\phi_n(x)-\phi_n(y)].
    \end{align*}
    Similarly, the Dominated Convergence Theorem together with finiteness of the mass \eqref{CondInVal}, \eqref{eq:conservation} and $\phi(x)/|x|\leq c$ imply 
    \begin{align*}
        \lim_{n\to \infty}
        \int_{\sd}
        \varphi_n(x)f(dx,t)
        =&
        \lim_{n\to \infty}
        \int_{\sd}
        \frac{\varphi_n(x)}{|x|}|x|f(dx,t)
        =
        \int_{\sd}
        \phi(x)f(dx,t)
    \end{align*}
    for all $t\ge0$. This proves the claim.
\end{proof}

%auto-ignore
% final sec 3
\section{The regularized problem}
\label{sec:regulized}

\subsection{Step 1: regularization}

In this subsection we will do the \ref{step:1}. For each $\epsilon\in(0,1)$ we will regularize the dynamics to the compact subset 
$\{\epsilon\leq|x|\leq2/\epsilon\}$ of $\sd$. This is done according to the following
definition.

\begin{assumption}
    \label{ass:Astep2}
    Suppose we are given 
    \begin{enumerate}[label=(\alph*)]
        \item  a regularization parameter $\epsilon\in(0,1)$,
        \item
            \label{ass:Astep2_K}
            a continuous, non-negative and symmetric function $K:
            \sd\times\sd\rightarrow {\R}_+$,
        \item
            \label{ass:Astep2_zeta}
            a continuous function $\zeta_\epsilon:\sd\to [0,1]$, such that
            $\zeta _\epsilon\left( x\right) =1$ for $|x|\leq 1/\epsilon$,
            and $\zeta _\epsilon \left( x\right) =0$ for $|x|\geq 2/\epsilon$.
    \end{enumerate}
\end{assumption}

In the regularized problem we do not need anymore assumption on the kernel $K$,
than what is in Assumption \ref{ass:Astep2}\ref{ass:Astep2_K}. The reason for this is that we are 
working in the compact set $\{\epsilon\leq|x|\leq2/\epsilon\}$ instead of the whole 
space $\sd$.

\begin{definition}
    \label{def:requ_solu}
    Suppose that Assumption \ref{ass:Astep2} holds. Suppose our initial
    data $\feo\in\M_+(\sd)$ satisfies 
    \begin{align}
        \label{eq:reguCondInVal}
        \feo
        \left(
            \sd\setminus\{\epsilon\leq|x|\leq2/\epsilon\}
        \right)
        =0.
    \end{align}
    Let our time domain be either $I=[0,\infty)$ or $I=[0,T]$ for some $T>0$.
    Let $\M_{+,b}(\sd)$ be endowed with the total variation norm topology of
    the Banach space $\M_b(\sd)$. We say that $\fe\in
    C^1(I;\M_{b}(\sd))\cap C(I;\M_{+,b}(\sd))$ is a regularized
    solution with the initial condition $\feo$, if for every $t\in I$ the
    following conditions are satisfied:
    \begin{enumerate}[label=(\alph*)]
        \item 
            \begin{align}
                \label{eq:regu_t0}
                \fe(\cdot,0) &= \feo.
            \end{align}
        \item 
            \begin{align}
                \label{fVanish}
                \fe 
                \left(
                        \sd\setminus\{\epsilon\leq|x|\leq2/\epsilon\}
                        ,t
                \right) = 0.
            \end{align}
        \item 
            \begin{equation}
            \int_{\sd}\fe(dx,t)\leq  \int_{\sd}\feo(dx). \label{fEstim}
            \end{equation}%
        \item 
            for all $\varphi\in C^1(I;C_c(\sd))$.
            \label{eq:regu_measure}
            \begin{align}
                \nonumber
                &\int_{\sd}\varphi(x,t)\dot{\fe}(dx,t)  
                \\
                \label{coagEq_regu_measure2} 
                &=
                \frac{1}{2}\int_{\sd}\int_{\sd} 
                K(x,y)
                \left[\zeta_\epsilon(x+y)\varphi(x+y,t)-\varphi(x,t)-\varphi(y,t)\right]
                \fe(dx,t)\fe(dy,t).
            \end{align}
    \end{enumerate}
\end{definition}

\begin{remark}
    \label{re:regu_sol}
    The condition \ref{eq:regu_measure} is
    equivalent to its integrated version
    \begin{align}
         \nonumber  
        &\int_{\sd}\varphi(x,t)\fe(dx,t)
        -
        \int_{\sd}\varphi(x,0)\feo(dx)
        =
        \int_0^t\int_{\sd}f(dx,s)\partial_s\varphi(x,s)ds
        \\
         \label{coagEq_regu_measure1} 
        &+\frac{1}{2}\int_0^t\int_{\sd}\int_{\sd} 
        K(x,y)
        \left[\zeta_\epsilon(x+y)\varphi(x+y,s)-\varphi(x,s)-\varphi(y,s)\right]
        \fe(dx,s)\fe(dy,s)ds, 
    \end{align}
    holding for all $\varphi\in C^1(I;C_c(\sd))$.  Also note that we can
    choose time-independent test functions in both \eqref{coagEq_regu_measure2}
    and \eqref{coagEq_regu_measure1}. Moreover, by the condition \eqref{fVanish},
    the equations \eqref{coagEq_regu_measure2} and \eqref{coagEq_regu_measure1}
    hold for all test functions $\phi\in C^1(I;C_0(\sd))$ if they hold for
    all $\varphi\in C^1(I;C_c(\sd))$.
\end{remark}

\subsection{Step 2: solving the regularized problem }
We will do the \ref{step:2}, that is, we will uniquely solve the regularized coagulation equation
by proving the following Theorem.

\begin{theorem}
    \label{thm:regularized_existence_uniqueness}
    Suppose that Assumption \ref{ass:Astep2} holds. Then, for any initial
    condition $\feo\in \mathcal{M}_+(\sd)$ satisfying \eqref{eq:reguCondInVal},
    there exists a unique regularized solution $\fe\in
    C^1([0,\infty);\M_{b}(\sd))\cap C([0,\infty);\M_{+,b}(\sd))$ with the
    initial condition $\feo$ in the sense of definition \ref{def:requ_solu}.
\end{theorem}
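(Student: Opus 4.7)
The plan is to treat \eqref{coagEq_regu_measure2} as an abstract Cauchy problem $\dot f_\epsilon = F_\epsilon(f_\epsilon)$, $f_\epsilon(0)=\feo$, in the Banach space $(\M_b(\sd),\|\cdot\|)$, where $F_\epsilon(\mu)$ is defined dually by the right-hand side of \eqref{coagEq_regu_measure2} with $f_\epsilon(dx,t)$ replaced by $\mu(dx)$. Since $K$ is continuous on the compact annulus $A_\epsilon:=\{\epsilon\leq|x|\leq 2/\epsilon\}$, it is bounded there by $M_\epsilon:=\sup_{A_\epsilon\times A_\epsilon}K<\infty$, and since $|\zeta_\epsilon(x+y)\varphi(x+y)-\varphi(x)-\varphi(y)|\leq 3\|\varphi\|_\infty$, one obtains for $\mu,\mu_1,\mu_2\in\M_b(\sd)$ supported in $A_\epsilon$
\begin{align*}
    \|F_\epsilon(\mu)\|\leq \tfrac{3}{2}M_\epsilon\|\mu\|^2,\qquad
    \|F_\epsilon(\mu_1)-F_\epsilon(\mu_2)\|\leq \tfrac{3}{2}M_\epsilon(\|\mu_1\|+\|\mu_2\|)\|\mu_1-\mu_2\|.
\end{align*}
Thus $F_\epsilon$ is locally Lipschitz on bounded sets, so a standard Banach fixed point argument applied to the integrated Cauchy problem on $C([0,T^*];\M_b(\sd))$ with $T^*$ small enough yields a unique local $C^1$ solution in the total variation topology.

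The central obstacle is ensuring that this local solution actually lies in $\M_{+,b}(\sd)$ and satisfies the support constraint \eqref{fVanish}. My preferred route is to decompose $F_\epsilon = G_\epsilon - \mathcal{L}_\epsilon$ into the positive gain
\begin{align*}
    \langle \varphi, G_\epsilon(\mu)\rangle = \tfrac{1}{2}\int_{\sd}\int_{\sd} K(x,y)\zeta_\epsilon(x+y)\varphi(x+y)\mu(dx)\mu(dy)
\end{align*}
and loss $\mathcal{L}_\epsilon(\mu)(dx) := L_\mu(x)\mu(dx)$ with $L_\mu(x):=\int_{\sd} K(x,y)\mu(dy)$, and recast the problem in Duhamel (mild) form
\begin{align*}
    f_\epsilon(dx,t) = e^{-\int_0^t L_{f_\epsilon(\cdot,s)}(x)ds}\feo(dx)
    + \int_0^t e^{-\int_s^t L_{f_\epsilon(\cdot,r)}(x)dr}G_\epsilon(f_\epsilon(\cdot,s))(dx)ds.
\end{align*}
Running Picard iteration for this fixed-point equation, starting from the constant trajectory $t\mapsto\feo$, stays inside the closed convex set of positive measures supported in $A_\epsilon$: positivity is preserved because both the exponential weight and $G_\epsilon$ are positivity preserving, and the support in $A_\epsilon$ is preserved because $|x+y|\geq\max(|x|,|y|)\geq\epsilon$ automatically for $x,y\in A_\epsilon$, while $\zeta_\epsilon$ annihilates any contribution with $|x+y|>2/\epsilon$; the loss simply multiplies a measure supported in $A_\epsilon$ by a nonnegative weight. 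By uniqueness of solutions to the ODE, this mild solution coincides with the abstract one, which therefore inherits positivity and the support condition \eqref{fVanish}.

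To globalize and prove \eqref{fEstim}, I test \eqref{coagEq_regu_measure2} with any $\varphi\in C_c(\sd)$ that equals $1$ on $A_\epsilon$ (legitimate by Remark \ref{re:regu_sol} combined with \eqref{fVanish}); this gives
\begin{align*}
    \frac{d}{dt}\|f_\epsilon(\cdot,t)\| = \tfrac{1}{2}\int_{\sd}\int_{\sd}K(x,y)[\zeta_\epsilon(x+y)-2]f_\epsilon(dx,t)f_\epsilon(dy,t)\leq 0,
\end{align*}
since $\zeta_\epsilon\leq 1<2$. Hence $\|f_\epsilon(\cdot,t)\|\leq\|\feo\|$ uniformly in $t$, which rules out finite-time blow-up in the total variation norm and, by the standard maximal-existence-interval continuation argument, extends the local solution uniquely to all of $[0,\infty)$, completing the proof.
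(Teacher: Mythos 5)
Your proposal is correct and follows essentially the same route as the paper: a Banach/Picard fixed-point argument on the Duhamel (exponential) form of the truncated equation for measures supported in $\{\epsilon\le|x|\le2/\epsilon\}$, positivity and the support condition \eqref{fVanish} read off from that mild formulation, the bound \eqref{fEstim} obtained by testing with a function equal to one on the support, and global extension from the resulting uniform total variation bound. The paper merely packages the continuation step as an explicit gluing-by-induction with a fixed time step determined by $\|K\|_\epsilon$ and $\|\feo\|$, which is the same mechanism as your maximal-interval argument.
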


In order to prove the above Theorem \ref{thm:regularized_existence_uniqueness}, we first prove the following Proposition \ref{prop:unique_weak_sol_T} that gives existence and uniqueness for short times.

\begin{proposition}
    \label{prop:unique_weak_sol_T}
    Suppose that Assumption \ref{ass:Astep2} holds and $\feo\in\M_+(\sd)$
    satisfies the condition \eqref{eq:reguCondInVal}.  Let $T\in(0,\infty)$
    satisfy $T\leq\frac{1}{12\|K\|_\epsilon(1+\|\feo\|)^2}$, where $\|K\|_\epsilon:=\sup\{|K(x,y)|\ : \ |x|,|y|\in[\epsilon,2/\epsilon]\}$.  Then there exists
    a unique regularized solution $\fe\in C^1([0,2T];\M_{b}(\sd))\cap
    C([0,2T];\M_{+,b}(\sd))$ in the sense of definition \ref{def:requ_solu}.
\end{proposition}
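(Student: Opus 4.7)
The plan is to recast the regularized problem as an integral fixed-point equation on $C([0,2T];\M_b(\sd))$ endowed with the sup-in-time of the total variation norm, solve it by the Banach contraction principle, and then recover the positivity and the structural properties required by Definition \ref{def:requ_solu}.

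First, I would introduce the symmetric bilinear operator $Q\colon \M_b(\sd)\times\M_b(\sd)\to\M_b(\sd)$ determined by duality with $C_0(\sd)$ via
\begin{equation*}
\langle\varphi,Q[\mu,\nu]\rangle = \tfrac{1}{2}\int_{\sd}\int_{\sd}K(x,y)\bigl[\zeta_\epsilon(x+y)\varphi(x+y)-\varphi(x)-\varphi(y)\bigr]\mu(dx)\nu(dy).
\end{equation*}
For measures supported in the compact set $\{\epsilon\leq|x|\leq 2/\epsilon\}$ the kernel is bounded by $\|K\|_\epsilon$ and the bracket by $3\|\varphi\|_\infty$, which yields the estimate $\|Q[\mu,\nu]\|\leq \tfrac{3}{2}\|K\|_\epsilon\|\mu\|\|\nu\|$. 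On the closed ball $B_R := \{f\in C([0,2T];\M_b(\sd)) : \sup_t\|f(t)\|\leq R\}$ with $R := 1+\|\feo\|$, the operator $\mathcal{S}[f](t) := \feo + \int_0^t Q[f(s),f(s)]\,ds$ then satisfies $\|\mathcal{S} f\|_\infty\leq \|\feo\|+3T\|K\|_\epsilon R^2$ and $\|\mathcal{S} f_1 - \mathcal{S} f_2\|_\infty \leq 6T\|K\|_\epsilon R\,\|f_1-f_2\|_\infty$, and the hypothesis $T\leq 1/(12\|K\|_\epsilon(1+\|\feo\|)^2)$ is precisely calibrated to make $\mathcal{S}$ both map $B_R$ into itself and be a strict contraction there.

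The Banach fixed-point theorem then produces a unique $\fe\in B_R$ with $\fe=\mathcal{S}\fe$, and the continuity of $s\mapsto Q[\fe(s),\fe(s)]$ in $\M_b(\sd)$ promotes $\fe$ to $C^1([0,2T];\M_b(\sd))$ with $\dot{\fe}(t)=Q[\fe(t),\fe(t)]$, i.e.\ exactly \eqref{coagEq_regu_measure2}. The support condition \eqref{fVanish} is automatic by induction on the Picard iterates because $\zeta_\epsilon$ kills the gain term outside $\{|x|\leq 2/\epsilon\}$ and the loss term inherits the support of its argument, while the initial condition \eqref{eq:reguCondInVal} already has this support. Once positivity is proven, the mass inequality \eqref{fEstim} follows by testing \eqref{coagEq_regu_measure2} against any $\varphi\in C_c(\sd)$ equal to $1$ on $\{\epsilon\leq |x|\leq 2/\epsilon\}$: the bracket satisfies $\zeta_\epsilon(x+y)\varphi(x+y)-\varphi(x)-\varphi(y)\leq -1\leq 0$ on the support of $\fe(s)\otimes\fe(s)$, so $t\mapsto\int\fe(dx,t)$ is non-increasing.

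The main obstacle, in my view, is establishing the positivity $\fe(t)\in\M_+(\sd)$, since the Banach fixed point a priori lives only in the signed space $\M_b(\sd)$. I would handle it by rewriting the equation in Duhamel form using the continuous non-negative function $k_\mu(x):=\int_\sd K(x,y)\mu(dy)$ as integrating factor, namely
\begin{equation*}
\fe(t)(dx) = e^{-\int_0^t k_{\fe(s)}(x)\,ds}\feo(dx) + \int_0^t e^{-\int_s^t k_{\fe(u)}(x)\,du}\,G[\fe(s),\fe(s)](dx)\,ds,
\end{equation*}
where $G[\mu,\mu]$ is the positive measure dual to $\varphi\mapsto \tfrac{1}{2}\iint K(x,y)\zeta_\epsilon(x+y)\varphi(x+y)\mu(dx)\mu(dy)$. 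Since multiplication of a positive measure by a bounded positive continuous function preserves $\M_+(\sd)$, a Picard iteration for this Duhamel formula performed inside the sub-ball of non-negative measures of $B_R$ converges to a non-negative fixed point (the Lipschitz estimates for the exponential factors and for $G$ are of the same type as before, only the numerical constants change), which by the uniqueness already established must coincide with $\fe$.
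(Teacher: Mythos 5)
Your overall strategy is viable and close in spirit to the paper's, but organized differently: the paper runs a \emph{single} Banach fixed-point argument directly on the Duhamel (mild) formulation with the integrating factor $e^{-\int a[g]}$, posed on a set of \emph{positive} measures supported in $\trunc$, so positivity, the support condition and the mass bound all come out of one contraction argument; you first solve the plain integral equation $f=\feo+\int_0^t Q[f(s),f(s)]\,ds$ in the signed space and only afterwards recover positivity through a second, Duhamel-based Picard iteration. That costs you a second round of contraction-type estimates, but it is legitimate, and your numerical constants are indeed compatible with the interval $[0,2T]$ under the stated smallness condition on $T$.

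There is, however, one concrete flaw in the argument as written: $Q$, and hence $\mathcal{S}$, is not well defined on your ball $B_R\subset C([0,2T];\M_b(\sd))$. Assumption \ref{ass:Astep2} only requires $K$ to be continuous on $\sd\times\sd$, so $K$ may blow up as $|x|\to0$ or $|x|\to\infty$ (such singular kernels are exactly what the regularization is designed to handle), and your estimate $\|Q[\mu,\nu]\|\le\tfrac32\|K\|_\epsilon\|\mu\|\,\|\nu\|$ holds only for measures carried by $\trunc$; for a general $\mu\in\M_b(\sd)$ with mass accumulating near the origin or at infinity the defining double integral can diverge. The remedy is the one the paper adopts: take as underlying space the closed subspace of $C([0,2T];\M_b(\sd))$ consisting of measures supported in $\trunc$, which also builds in \eqref{fVanish} and makes your Picard-iterate induction on the support unnecessary. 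Two smaller points should be made explicit as well: (i) uniqueness in the sense of Definition \ref{def:requ_solu} requires the short additional step that \emph{any} regularized solution satisfies your integral equation (test \eqref{coagEq_regu_measure1} with time-independent test functions) and lies in the ball $B_R$, which follows from its positivity and \eqref{fEstim}; (ii) in the final step you must check that the non-negative fixed point of the Duhamel equation actually solves the original integral equation (a Fubini/differentiation computation), otherwise the appeal to "the uniqueness already established" has nothing to compare against. Both are routine, but they are the hinges on which the uniqueness and positivity claims turn.
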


\begin{proof}
    This proof is a standard Banach fixed point argument. 
    For example, 
    the proof of \cite[Proposition 3.6]{FLNV} is essentially the same. Also a
    very detailed proof of our claim, Proposition \ref{prop:unique_weak_sol_T}, in
    the case $d=1$ can be found in \cite{SakariThesis}.
    Therefore, we omit some standard computations.
    
    First step is to reformulate the question as a fixed point equation $\op[f]=f$.
     We start by collecting all positive
    Radon measures satisfying the condition \eqref{eq:reguCondInVal} into a set
    $\X$. Then $\X\subset \M_{+,b}(\sd)$.
    By the condition \eqref{fVanish}, we are looking for solutions in $C([0,T];\X)$. We equip the space
    $C([0,T];\X)$ with the norm
    $\|f\|_T:=\sup_{t\in[0,T]} \|f(\cdot,t)\|$,
    where $\|\cdot\|$ is the total variation norm.
    We define our fixed point operator $\op:C([0,T];\X)\to C([0,T];\X)$ by 
    setting 
    \begin{align}
    \nonumber
        &\langle \phi, \op[g](t)\rangle
        := \int_{\sd}\phi(x) e^{-\int_0^t a[g](x,s)ds}\feo(dx),
        \\
        \label{eq:fixed_point_op}
        &\quad + \frac{1}{2}\int_0^t \int_{\sd} \int_{\sd}
        K(x,y)\zeta_\epsilon(x+y)\phi(x+y)
        e^{-\int_s^t a[g](x+y,\xi)d\xi}g(dx,s)g(dy,s)ds
    \end{align}
    for all $t\in[0,T]$ and $\phi\in C_0(\sd)$. Where we defined $a[g]\in C(\sd\times [0,T];[0,\infty))$ by
    \begin{align}
      \label{aFunct}
    a\left[ g\right] \left( x,t\right)
    &:=
    \int_{{\sd}}K\left(
    x,y\right) {g\left( dy,t\right)}.
    \end{align}
    
    Motivation for the definition of $\op$, is that now if $\fe$ is a regularized solution with the initial
    condition $\feo$ in the sense of definition \ref{def:requ_solu}, then
    $\op[\fe]=\fe$. To see this, fix $t\in[0,T]$ and
    $\phi\in C_0(\sd)$. Let $\varphi\in C^1([0,T];C_0(\sd))$ be 
    defined by  $\varphi(x,s)=\phi(x)e^{-\int_s^t
    a[\fe](x,\xi)d\xi}$. Then $\langle \phi, \op[\fe](t)\rangle = \langle\phi, \fe(\cdot,t)\rangle$ follows from a straightforward computation after putting $\varphi$ as the test function to the regularized coagulation equation \eqref{coagEq_regu_measure2}.

    Let $X=\{g\in C([0,T];\X)\ : \ \|g-\feo\|_T\leq 1$. Then $X$ is a complete metric space as a closed subset  of the Banach space $C([0,T];\M_b(\sd))$. Next we do a standard Banach fixed point argument to prove that $\op$ has a unique fixed point on $X$. 
    To prove that $\op$ is contraction, 
     we
    estimate the two terms $\opi$
    and $\opii$ of $\op$ separately. Here by $\opi$ we mean the term in \eqref{eq:fixed_point_op} with the integral over $\feo$  and $\opii:=\op-\opi$. Let 
    $\phi\in C_0(\sd)$ with $\|\phi\|_\infty\leq1$ and $t\in[0,T]$. 
    By 
    the fact that $|e^{-x}-e^{-y}|\leq |x-y|$ for $x,y\ge0$,
    we have 
    \begin{align}
        |\langle \phi, \opi[f](t)-\opi[g](t)
        \rangle|
        \label{eq:T1_contract_est}
        \leq &
        T
        \|K\|_\epsilon
        \|\feo\|
        \|f-g\|_T.
    \end{align}
    Similarly the second term can be estimated by 
    \begin{align}
        \label{eq:T2_contract_est}
        |\langle \phi, \opii[f](t)-\opii[g](t)
        \rangle|
        \leq 
        \frac{1}{2} T^2
        \|K\|_\epsilon^2
        \|f\|_T^2
        \|f-g\|_T 
        +
        T
        \|K\|_\epsilon
        \|f\|_T
        \|f-g\|_T.
    \end{align}
    Note that $\|h\|_T\leq
    1+\|\feo\|$ for all $h\in X$.
    Then \eqref{eq:T1_contract_est}, \eqref{eq:T2_contract_est} and
    $T\leq\frac{1}{12\|K\|_\epsilon(1+\|\feo\|)^2}$ imply
    \begin{align}
        \label{eq:contraction2}
        \|\op[f]-\op[g]\|_T
        \leq \frac{1}{2}\|f-g\|_T
    \end{align}
    for all $f,g\in X$. This proves that $\op$ is a contraction on $X$.

    Let $g\in X$. To prove that $\op[g]\in X$, we  
    telescope with $\op[\feo]$, use \eqref{eq:contraction2} and $\op=\opi+\opii$ to obtain
    \begin{align*}
        \|\op[g]-\feo\|_T \leq&
        \frac{1}{2}
        +
         \|\opi[\feo]-\feo\|_T +
         \|\opii[\feo]\|_T. 
    \end{align*}

     Take $t\in[0,T]$ and $\phi\in C_0(\sd)$ with $\|\phi\|_\infty\leq1$.
    Using $|e^{-x}-1|\leq x$ for
    $x\ge0$ implies
    \begin{align}
        |\langle \phi, \opi[\feo](t)-\feo
        \rangle|
        \leq &
         T
        \|K\|_\epsilon
        \|\feo\|^2
        %\\
        \leq
        \label{eq:T1_mapsto_itself}
        \frac{1}{12}.
    \end{align}
    Since $|\zeta_\epsilon(x+y)\phi(x+y) e^{-\int_s^t
    a[\feo](x+y,\xi)d\xi}|\leq 1$, the second term can be estimated by
    \begin{align}
        |\langle \phi, \opii[\feo](t)
        \rangle|
        \leq &
        \frac{1}{2}
        T
        \|K\|_\epsilon
        \|\feo\|^2
        \leq
        \frac{1}{24}.
        \label{eq:T2_mapsto_itself}
    \end{align}
    Hence, $\|\op[g]-\feo\|_T \leq 1/2+1/12+1/24\leq1$, and so, $\op[g]\in X$.
    Thereby, $\op$ maps $X$ to itself. Thus, 
    Banach fixed point Theorem gives the existence of a unique $\fe \in X$ satisfying $\op[\fe]=\fe$.
    It remains to be proven that $\fe$ is a unique solution to the regularized coagulation equation in the sense of definition \ref{def:requ_solu}.

    To prove uniqueness, suppose that $g_\epsilon\in
    C^1([0,T];\M_{+,b}(\sd))$ is a regularized solution with the initial
    data $\feo$ in the sense of definition \ref{def:requ_solu}.  Then $\op[g_\epsilon]=g_\epsilon$ by the argument after equation \eqref{aFunct}. Putting
    $\phi\in C_0(\sd)$ as a test function to the regularized coagulation equation
    \eqref{coagEq_regu_measure1} and using  $T\leq\frac{1}{12\|K\|_\epsilon(1+\|\feo\|)^2}$
    together with the condition \eqref{fEstim} imply
        $\left| \langle \phi,g_\epsilon(\cdot,t)-\feo\rangle \right| \leq 1$. Hence, $g_\epsilon\in X$. On the other hand, $\op$ has a unique fixed point on $X$. Therefore, $g_\epsilon=\fe$. This proves uniqueness. 

    It remains to be proven that $\fe$ is differentiable and satisfies the conditions \eqref{eq:regu_t0}-\eqref{coagEq_regu_measure1} of Definition \ref{def:requ_solu}. The condition \eqref{fVanish} holds as $\fe\in C([0,T];\X)$. The condition \eqref{eq:regu_t0} is true, since $\fe=\op[\fe]$ and $\op[\fe](0)=\feo$ by definition \eqref{eq:fixed_point_op} of $\op$.

    We define $g\in C([0,T];\M_b(\sd))$ by setting
    \begin{align}
        \label{eq:derivative}
        \langle \phi, g(t)\rangle :=
                \frac{1}{2}\int_{\sd}\int_{\sd}
                K(x,y)
                \left[\zeta_\epsilon(x+y)\phi(x+y)-\phi(x)-\phi(y)\right]
                \fe(dx,t)\fe(dy,t),
    \end{align}
    for $\phi\in C_0(\sd)$ and $t\in[0,T]$.
    Note that $g(t)$ is the functional on the right hand side of the regularized
    equation \eqref{coagEq_regu_measure2}.  
      Hence, proving $\dot{\fe}(\cdot,t)=g(t)$ for
    every $t\in[0,T]$ will prove differentiability of $\fe$ and that \eqref{coagEq_regu_measure2} holds.
    By continuity of $g$ it is sufficient to prove $\dot{\fe}(\cdot,t)=g(t)$ for
    $t\in(0,T)$.
    Let $t\in(0,T)$.
    Recall that $\dot{\fe}(\cdot,t)=g_t$ means that
    \begin{align}
        \label{eq:limit_derivative}
        |h|^{-1}\|\fe(\cdot,t+h)-\fe(\cdot,t)-hg(t)\|\to0
    \end{align}
    as $h\to0$. The limit \eqref{eq:limit_derivative} follows from standard computations after writing $\fe=\op[\fe]$.
    For details see \cite{SakariThesis}.

    Take $\varphi \in C_c(\sd)$, such that $\varphi(x)=1$
    whenever
    $|x|\in[\epsilon,4/\epsilon]$. Then
    $\zeta_\epsilon(x+y)\varphi(x+y)-\varphi(x)-\varphi(y)\leq0$ whenever 
    $|x|,|y|\in[\epsilon,2/\epsilon]$. Hence, putting  $\varphi$ into the regularized
    coagulation equation
    \eqref{coagEq_regu_measure1} and using condition \eqref{fVanish} imply condition \eqref{fEstim}.
    Thus $\fe$ is a
    regularized solution in the sense of Definition \ref{def:requ_solu}.
\end{proof}

To prove the Theorem \ref{thm:regularized_existence_uniqueness} we just have to 
extend the solution obtained from Proposition \ref{prop:unique_weak_sol_T} to
the whole time interval $[0,\infty)$. This is done by gluing as follows.
\\

\begin{proofof}[Proof of Theorem \ref{thm:regularized_existence_uniqueness}]
    The idea of the proof is to use induction and Proposition
    \ref{prop:unique_weak_sol_T}. 
    Fix $T\in(0,\infty)$ to be
    such that it satisfies $2T\leq\frac{1}{12\|K\|_\epsilon(1+\|\feo\|)^2}$.
    To shorten the notation, let 
    \[
        \mathcal{C}^n=C^1([0,(2+n)T];\M_{b}(\sd))\cap C([0,(2+n)T];\M_{+,b}(\sd)),
    \]
    for $n=0,1,2,\dots$.
    Let $\fe^0$ be the unique
    regularized solution with the initial condition $\feo$ obtained from the
    Proposition \ref{prop:unique_weak_sol_T}. Then $\fe^0\in \mathcal{C}^0$.

    Let $n\in\N\cup\{0\}$. Suppose that $\fe^n\in \mathcal{C}^n$ is a unique
    regularized solution 
    in the sense of definition
    \ref{def:requ_solu} with $\fe^n(\cdot,0)=\feo$. Let
    $\tilde{\feo}^{n+1}=\fe^n(\cdot,(1+n)T)$. Then
    $\|\tilde{\feo}^{n+1}\|\leq\|\feo\|$ by \eqref{fEstim}. Hence, by
    Proposition \ref{prop:unique_weak_sol_T} there exists a unique solution
    $\tilde{\feo}^{n+1}\in \mathcal{C}^0$ in the sense
    of definition \ref{def:requ_solu}
    with $\tilde{\fe}^{n+1}(\cdot,0)=\tilde{\feo}^{n+1}$. Note that also
    $t\mapsto \fe^n(\cdot,t+(1+n)T)$ is a regularized solution on the interval
    $[0,2T]$ in the sense of definition \ref{def:requ_solu} with initial data
    $\fe^n(\cdot,(1+n)T)$. Hence, $\tilde{\feo}^{n+1}=\fe^n(\cdot,(1+n)T)$ and
    uniqueness by Proposition \ref{prop:unique_weak_sol_T} imply that there holds
    $\tilde{\fe}^{n+1}(\cdot,t)=\fe^n(\cdot,(1+n)T+t)$ for all $t\in[0,T]$.

    Let $\fe^{n+1}\in \mathcal{C}^{n+1}$ be
    defined by
    $\fe^{n+1}(\cdot,t)=\fe^{n}(\cdot,t)$ for all $t\in[0,(2+n)T]$ and
    $\fe^{n+1}(\cdot,t)=\tilde{\fe}(\cdot,t-(1+n)T)$ for all $t\in
    [(1+n)T,(3+n)T]$.  Then $\fe^{n+1}$ is a unique regularized solution in the
    sense of definition \ref{def:requ_solu} on the interval $[0,(3+n)T]$ with
    $\fe^{n+1}(\cdot,0)=\feo$. Hence, induction implies that for every
    $n\in\N\cup\{0\}$ there exists a unique regularized solution  $\fe^n\in
    \mathcal{C}^n$ in the sense
    of definition \ref{def:requ_solu} with $\fe^n(\cdot,0)=\feo$.

    Notice that by uniqueness, for every $n,N\in\N\cup\{0\}$ with $n\leq N$
    there holds $\fe^n(\cdot,t)=\fe^N(\cdot,t)$ for all $t\in[0,(2+n)T]$.
    Thereby, we can define $\fe\in C^1([0,\infty);\M_{b}(\sd))\cap
    C([0,\infty);\M_{+,b}(\sd))$ by $\fe(\cdot,t)=\fe^n(\cdot,t)$  for all
    $t\in[0,(2+n)T]$ and for all $n\in\N\cup\{0\}$. This makes $\fe$ into a
    unique regularized solution in the whole time interval $[0,\infty)$ in the
    sense of definition \ref{def:requ_solu} with $\fe(\cdot,0)=\fe$.
\end{proofof}

%auto-ignore
% final sec 4
\section{Proof of the existence result}
\label{sec:existence}

\subsection{Step 3: sequential compactness}
\label{sec:stepIII}

In this subsection we will do the \ref{step:3}. We start by collecting the
family of regularized solutions into one Assumption:

\begin{assumption}
    \label{ass:Astep3}

    Suppose $\M_{+,b}(\sd)\subset \M_b(\sd)$ is endowed with the weak*-topology. Suppose we are
    given:
    \begin{enumerate}[label=(\alph*)]
        \item
            \label{ass:Astep3_K}
            A kernel $K$ as in \eqref{eq:condK_cont} and it satisfies the
            upper bound \eqref{eq:condK_sym2}
            with $\gamma,\ \lambda \in\R^3$, such that $ -\lambda _l\leq\gamma_l+\lambda _l$ and
            $\gamma_l+\lambda _l<1$ for each $l\in\{1,2,3\}$.
        \item
            \label{ass:Astep3_weight}
            The continuous weight function $\omega:\sd\to(0,\infty)$ defined in
            \eqref{eq:weight}, that is, $\omega(x)=|x|^{\min\{-\lambda _l\}}$ for $|x|\leq 1$ and
            $\omega(x)=|x|^{\max\{\gamma_l+\lambda _l\}}$ for $|x|>1$.
        \item
            \label{ass:Astep3_ini}
            An initial data $f_0\in\M_+(\sd)$ and a constant $r>0$
            satisfying \eqref{eq:ini_cond}.
        \item
            \label{ass:Astep3_zeta}
            For each $\epsilon\in(0,1)$, a continuous function
            $\zeta_\epsilon:\sd\to [0,1]$ such that $\zeta _\epsilon(
            x) =1$ for $|x|\leq 1/\epsilon$, and $\zeta _\epsilon(x) =0$ for
            $|x|\geq 2/\epsilon$.
        \item
            \label{ass:Astep3_fam}
            Family of weighted regularized solutions 
            $F=\{\omega f_\epsilon\}_{\epsilon\in(0,1)}$, where $f_\epsilon$ is a regularized
            solution in the sense of Definition \ref{def:requ_solu} with the
            initial data $f_{\epsilon,0}(\cdot):=f_0(\cdot\cap
            \{\epsilon\leq|x|\leq2/\epsilon\})$, truncation $\zeta_\epsilon$, and the
            kernel $K$.
    \end{enumerate}
\end{assumption}

\begin{remark}
    {Note that $F\subset
            C([0,\infty);\M_{+,b}(\sd))$ with $\M_{+,b}(\sd)$ endowed
            with weak*-topology since by Definition 
            \ref{def:requ_solu} we know that $F\subset C([0,\infty);\M_{+,b}(\sd))$ with $\M_{+,b}(\sd)$ endowed with the topology induced by the
            total variation-norm.}
\end{remark}

\subsubsection*{Space for the convergence}

As explained in \ref{step:3}, we want to prove that the family $F$ of
regularized solutions is sequentially compact in the topology induced by the
uniformity of uniform convergence in compact sets. We remark this topology is equivalent
with the compact open topology \cite[Theorem 11 on p. 230]{Kelley}, which will be used below in the proof of Proposition \ref{prop:sequentally_compactness}. We suppose 
that the reader is familiar with the basic concepts and results of uniform spaces (cf.
\cite[chapters 6 and 7]{Kelley} or \cite{SakariThesis}). 

To obtain the
sequential compactness we will first prove that the closure of $F$ is compact and 
then prove its metrizability to obtain the sequentially compactness. The compactness 
will be obtained from Arzelà-Ascoli Theorem \cite[Theorem 17 on p.233-234]{Kelley}, 
which we present for the readers convenience below.

We are going to need also two simple results, Proposition
\ref{prop:handling_closure} and \ref{prop:sequentally_compactness}, related to
uniform spaces. We will give the proofs of these results, since we were 
unable to find them elsewhere. The first, Proposition \ref{prop:handling_closure}, 
will be needed in our case to control the closure of the regularized
solutions $F$. The second, Proposition \ref{prop:sequentally_compactness},
will be used to obtain the {pseudo-}metrizability of the closure of $F$. We will apply the Arzelà-Ascoli Theorem \ref{thm:Arzela-Ascoli} and Proposition \ref{prop:sequentally_compactness} to 
$X=[0,\infty)$ and $Y=\M_b(\sd)$ with the weak star topology. While Proposition \ref{prop:candidate_bounds} will be applied for 
$Z=C_0(\sd)$, and so $Z^*\cong \M_b(\sd)$.

\begin{theorem}
    \label{thm:Arzela-Ascoli}
    \textup{(Arzelà-Ascoli)} Let $C(X;Y)$ be the family of all continuous
    functions on a $T_3$ locally compact topological space $X$ to a Hausdorff
    uniform space $Y$, and let $C(X;Y)$ have the topology of uniform
    convergence on compact sets. Then a subfamily $F$ of $C(X;Y)$ is compact if and
    only if
    \begin{enumerate}[label=(\alph*)]
        \item \label{thm:AA_a} 
            $F$ is closed in $C(X;Y)$,
        \item \label{thm:AA_b}
            $F[x]:=\{f(x) \ | \ f\in F\}$ has a compact closure for all $x\in X$, and
        \item \label{thm:AA_c}
            the family $F$ is equicontinuous. 
    \end{enumerate}
\end{theorem}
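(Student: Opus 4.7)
The plan is to treat necessity and sufficiency separately. For necessity, compactness of $F$ implies each of (a), (b), (c) by standard arguments: (a) follows because $C(X;Y)$ is Hausdorff whenever $Y$ is (distinct continuous maps differ at some $x$, where the evaluation map separates them), so every compact subset is closed; (b) follows because for each $x\in X$ the evaluation map $e_x\colon C(X;Y)\to Y$ is continuous in the topology of uniform convergence on compact sets (apply the defining uniformity to the compact singleton $\{x\}$), whence $F[x]=e_x(F)$ is a continuous image of a compact set; and (c) follows by covering $F$ with finitely many ``balls'' $B(f_i,W)$ in the compact-open uniform structure and then shrinking to a common neighborhood of a given $x$ using continuity of each $f_i$ together with a symmetric entourage $W$ of $Y$ with $W\circ W\circ W\subset V$.

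The main content is sufficiency. I would embed $F$ via the evaluation map
\[
\Phi\colon F\hookrightarrow \prod_{x\in X}Y,\qquad f\mapsto (f(x))_{x\in X},
\]
and exploit Tychonoff's theorem. By hypothesis (b), each $\overline{F[x]}$ is compact, so $P:=\prod_{x\in X}\overline{F[x]}$ is compact in the product (pointwise convergence) topology. Let $\tilde F$ denote the closure of $\Phi(F)$ in $P$; then $\tilde F$ is compact in the pointwise topology as a closed subset of $P$.

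The heart of the proof is then to establish two facts. First, $\tilde F\subset C(X;Y)$, i.e., every pointwise limit of a net from $F$ remains continuous: given $g\in\tilde F$, a point $x\in X$ and an entourage $V$ of $Y$, choose a symmetric $W$ with $W\circ W\circ W\subset V$; equicontinuity of $F$ furnishes a neighborhood $U$ of $x$ on which all $f\in F$ vary within $W$, and passing to a pointwise limit along a net converging to $g$ shows that $g$ varies within $W\circ W\circ W\subset V$ on $U$ (the same argument shows $\tilde F$ itself is equicontinuous). Second, on the equicontinuous family $\tilde F$, the pointwise topology coincides with the topology of uniform convergence on compacts: given a compact $K\subset X$ and entourage $V$, equicontinuity combined with compactness of $K$ produces a finite cover $K\subset\bigcup_{i=1}^n U_i$ with chosen $x_i\in U_i$, after which pointwise control of $f\mapsto f(x_i)$ for $i=1,\ldots,n$ promotes to uniform control on $K$ via a three-triangle entourage estimate.

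With both facts in hand, $\tilde F$ is compact in the compact-open topology, so $F\subset\tilde F$, being closed in $C(X;Y)$ by (a), is a closed subset of a compact space and is therefore compact. I expect the main obstacle to be the clean identification of the pointwise and compact-open topologies on $\tilde F$: this is where the full strength of equicontinuity enters and where the hypotheses $T_3$ and local compactness of $X$ become relevant, ensuring that the compact-open topology is indeed induced by a Hausdorff uniformity and that neighborhoods of points in $X$ can be chosen relatively compact. The principal technical nuisance compared to the metric case is that all closeness estimates must be carried out through nets and symmetric entourages rather than sequences and $\varepsilon$-balls.
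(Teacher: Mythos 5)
Your proposal is correct in substance, but note that the paper does not prove this theorem at all: it is quoted (up to naming conventions) from Kelley's \emph{General Topology}, Theorem 17 on p.~233--234, and is used as a black box, so there is no internal proof to compare against. Your argument is the standard one: necessity via Hausdorffness of the topology of uniform convergence on compacta, continuity of the evaluation maps, and a three-entourage estimate; sufficiency via the evaluation embedding into $\prod_{x\in X}\overline{F[x]}$, Tychonoff's theorem, stability of continuity and equicontinuity under pointwise limits, and the coincidence of the pointwise and uniform-on-compacta topologies on equicontinuous families. Two small points deserve attention. First, in the necessity of equicontinuity the basic entourages of $C(X;Y)$ only control functions on compact sets, so the ``balls'' $B(f_i,W)$ must be taken with respect to a compact neighbourhood $K$ of the given point $x$; this is exactly where local compactness of $X$ (and, in Kelley's treatment, regularity) is needed, whereas your closing remarks locate the role of these hypotheses in the sufficiency half, which in fact goes through for an arbitrary topological space $X$. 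Second, at the end you need $F$ to be closed in $\tilde F$ for the pointwise topology; this follows because assumption (a) gives closedness in $C(X;Y)$ and your second fact identifies the two relative topologies on $\tilde F$, but it is worth saying explicitly. With those clarifications your argument is a complete and correct proof of the cited theorem, and in particular independent of the paper, which relies on the citation rather than a proof.
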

In the above Theorem \ref{thm:Arzela-Ascoli}, by $X$  being $T_3$ we mean that every closed set $E\subset X$ and point in $X\setminus E$ have disjoint neigborhoods. We note that Kelley uses different naming conventions in \cite{Kelley}, e.g. our $T_3$ is called regular in \cite{Kelley}.

We recall that equicontinuity of $F$ in the condition \ref{thm:AA_c} means that for all $x\in X$ and for each member $V$ in the uniformity of $Y$ there is a neigbourhood $U\subset X$ of $x$, such that $f[U]\subset V[f(x)]$ for all $f\in F$. Here, $f[U]$ denotes the image of $U$ and $V[f(x)]=\{ y\in Y\ : \ (f(x),y)\in V$. The latter shorthand notation is very useful, and therefor if $X$ is a set and $U\subset X^2$, then we denote
\begin{align}
    U[x] :=\{y\in X:(x,y)\in U\}.
\end{align}
We will also need the following two shorthand notations. If $d:X^2\to [0,\infty)$ and $r>0$, then we denote 
\begin{align}
    \mathbb{B}_{d,r} := \{(x,y)\in X^2\ : \ d(x,y)<r\}. \label{eq:unicormity_V_set}
\end{align}
If $X$ is a vector space and $p:X\to[0,\infty)$, then we denote
\begin{align}
    \mathbb{B}_{p,r} := \{(x,y)\in X^2\ : \ p(x-y)<r\}. \label{eq:unicormity_V_set_norm}
\end{align}

\begin{proposition}
    \label{prop:handling_closure}
    Let $X$ be a topological space and $Z$ a locally compact Hausdorff space.
    Let $Z^*$ be equipped with the uniformity generated by the pseudo-norms
    obtained for each $z\in Z$ by the dual-pairing $z^*\mapsto|\langle
    z,z^*\rangle|$. Let $C(X;Z^*)$ be equipped with the uniformity of uniform
    convergence on compact sets. Let $F\subset C(X;Z^*)$ and denote its closure
    by $\bar{F}$. If $\bar{f}\in \bar{F}$, then for every $z\in Z$, $K\subset
    X$ compact and $\delta>0$, there exists $f\in F$, such that
    \begin{align}
        |\langle z,\bar{f}(x)-f(x)\rangle|<\delta,
    \end{align}
    for all $x\in K$.
\end{proposition}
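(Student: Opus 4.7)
The plan is to chase definitions, since the statement is essentially the assertion that for the uniformity of uniform convergence on compact sets, a basic entourage encodes exactly the kind of uniform approximation stated. The only substantive content is identifying the correct basic entourage in $C(X;Z^*)$ corresponding to the data $(z,K,\delta)$.

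First I would recall the explicit description of a base of entourages in each space. By hypothesis, a base of entourages of $Z^*$ is generated by the seminorms $p_z(z^*)=|\langle z,z^*\rangle|$ for $z\in Z$, so the sets $V_{z,\delta}:=\mathbb{B}_{p_z,\delta}=\{(z_1^*,z_2^*)\in (Z^*)^2 : |\langle z, z_1^*-z_2^*\rangle|<\delta\}$ form a subbase. A base for the uniformity of uniform convergence on compact sets on $C(X;Z^*)$ consists of the sets
\begin{align*}
W(K,V):=\{(g,h)\in C(X;Z^*)^2 : (g(x),h(x))\in V \text{ for all } x\in K\},
\end{align*}
where $K\subset X$ is compact and $V$ is an entourage of $Z^*$; see \cite[Chapter 7]{Kelley}.

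Next, fix $z\in Z$, a compact $K\subset X$, and $\delta>0$. Define the basic entourage $U:=W(K,V_{z,\delta})$ of $C(X;Z^*)$. By definition of closure in a uniform space, $\bar{f}\in \overline{F}$ implies that the neighborhood $U[\bar{f}]$ of $\bar{f}$ meets $F$, i.e., there exists $f\in F$ with $(\bar{f},f)\in U$. Unwinding the definitions of $U$ and $V_{z,\delta}$, this means that $|\langle z, \bar{f}(x)-f(x)\rangle|<\delta$ for every $x\in K$, which is exactly the claim.

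There is no real obstacle here: the statement holds by construction once the base of entourages is spelled out. The only care to take is to make sure we use the entourage characterization of closure (equivalently, that $\bar{f}$ belongs to the closure iff every basic entourage neighborhood $U[\bar{f}]$ meets $F$), which is standard for uniform — in particular Hausdorff — spaces and requires nothing beyond the definitions collected in \cite{Kelley}. In the subsequent application $(X=[0,\infty)$, $Z=C_0(\sd)$, $Z^*\cong\M_b(\sd)$ with the weak$^*$ uniformity) this Proposition is exactly what lets us transfer approximation statements from members of the closure $\bar{F}$ back to genuine regularized solutions.
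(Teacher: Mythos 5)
Your proposal is correct and follows essentially the same route as the paper: identify the basic entourage of $C(X;Z^*)$ determined by $(z,K,\delta)$, note that $U[\bar{f}]$ is a neighbourhood of $\bar{f}$ in the uniform topology (the paper cites \cite[Theorem 4 on p.~178]{Kelley} for this), and conclude from the definition of closure that $U[\bar{f}]\cap F\neq\emptyset$. No gaps; the argument matches the paper's proof step for step.
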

\begin{proof}
    Fix any $\bar{f}\in\bar{F}$ and fix $z\in Z$, $K\subset X$ compact and
    $\delta>0$.
    By definition, $\mathbb{B}_{|\langle
    z,\cdot\rangle|,\delta}=\{(z^*_1,z^*_2)\in (Z^*)^2 \ : |\langle
    z,z^*_1-z^*_2\rangle|<\delta \}$ is in the uniformity of $Z^*$.  Since $K$
    is compact, we obtain that
    \begin{align*}
        U &:= \{(f,g)\in C(X;Z^*)^2 \ :
        \ (f(x),g(x))\in \mathbb{B}_{|\langle
        z,\cdot\rangle|,\delta} \quad \forall x\in K\} \\
        &=\{(f,g) \ : \ |\langle z,g(x)-f(x)\rangle| < \delta \quad \forall x\in K\}
    \end{align*}
    is in the uniformity of $C(X;Z^*)$. Then by \cite[Theorem 4 on p. 178]{Kelley},
    the set 
    \[
        U[\bar{f}]=\{f\in C(X;Z^*)\ : \ |\langle z,\bar{f}(x)-f(x)\rangle|<\delta \quad \forall x\in K\}
    \]
    contains a neighborhood of $\bar{f}$. Hence, by definition of closure,
    there exists $f\in U[\bar{f}]\cap F$.
\end{proof}
\begin{proposition}
    \label{prop:sequentally_compactness}
    Let $X$ be a $\sigma$-compact topological space.
    Let $Y$ be a uniform
    space, whose topology is pseudo-metrizable by a pseudo-metric $d$. Let $F\subset
    C(X;Y)$, where $C(X;Y)$ is equipped with the uniformity of uniform
    convergence on compact sets. Then the topology of $F$ 
    is pseudo-metrizable.
\end{proposition}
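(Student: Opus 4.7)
The plan is to pseudo-metrize the topology of $F$ explicitly by building a countable family of ``uniform convergence on a single compact'' pseudo-metrics from an exhausting sequence of compact sets in $X$, and combining them into one pseudo-metric via a standard weighted sum. Using $\sigma$-compactness of $X$, I would first write $X = \bigcup_{n\in\N} K_n$ with $K_n$ compact, and by replacing $K_n$ with $\bigcup_{i\leq n} K_i$ arrange that the sequence is increasing.

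For each $n$, I define
\[
d_n(f,g) := \sup_{x \in K_n} d(f(x), g(x)), \qquad f,g\in F.
\]
This is finite because $f,g\in C(X;Y)$ are continuous and $K_n$ is compact, so $f(K_n)\cup g(K_n)$ is compact in $Y$ and the continuous pseudo-metric $d$ is bounded on that compact set. Each $d_n$ is a pseudo-metric on $F$, and I combine them into
\[
D(f,g) := \sum_{n=1}^\infty 2^{-n}\,\frac{d_n(f,g)}{1+d_n(f,g)},
\]
which is a well-defined pseudo-metric on $F$ since each summand is bounded by $2^{-n}$.

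The core step is then to check that $D$ induces the subspace topology on $F$ coming from the uniformity of uniform convergence on compact sets. For one inclusion, given $r>0$, choose $N$ with $\sum_{n>N} 2^{-n} < r/2$; the monotonicity $d_n\leq d_N$ for $n\leq N$ (from $K_n\subset K_N$) shows that the basic subspace neighborhood $\{g\in F : d_N(f,g) < r/2\}$ is contained in the $D$-ball of radius $r$ around $f$, so every $D$-open set is subspace-open. For the reverse inclusion, given a basic subspace neighborhood $\{g\in F : \sup_{x\in K} d(f(x),g(x))<\delta\}$ for $K\subset X$ compact and $\delta>0$, pick $N$ with $K\subset K_N$; then $D(f,g) < 2^{-N}\delta/(1+\delta)$ forces $d_N(f,g)<\delta$, and therefore $\sup_{x\in K} d(f(x),g(x))\leq d_N(f,g)<\delta$.

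The main obstacle is the selection of $N$ with $K\subset K_N$ in the reverse direction, which is a hemicompactness-type property rather than bare $\sigma$-compactness. It holds automatically in the setting where the proposition is applied in this paper, namely $X=[0,\infty)$, which is both $\sigma$-compact and locally compact Hausdorff and hence hemicompact, so that every compact $K\subset X$ is contained in some $K_N$. Once both inclusions are established, $D$ generates the topology of $F$, proving that this topology is pseudo-metrizable.
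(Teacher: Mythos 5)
Your construction is essentially the paper's argument: an increasing compact exhaustion $K_1\subset K_2\subset\dots$ of $X$, sup-pseudometrics over the $K_n$, and the observation that these generate the topology of uniform convergence on compact sets; the only real difference is that you assemble an explicit pseudo-metric $D$ by the weighted-sum trick, whereas the paper exhibits the countable family $U_n=\{(g,h)\,:\,(g(x),h(x))\in\mathbb{B}_{d,1/n}\ \forall x\in K_n\}$ as a base of the uniformity and then invokes the Metrization Theorem for uniformities with a countable base \cite[Theorem 13 on p.~186]{Kelley}. Two remarks. First, you are right that the reverse inclusion needs every compact $K\subset X$ to be contained in some $K_N$, i.e.\ hemicompactness, which bare $\sigma$-compactness does not supply (e.g.\ $X=\mathbb{Q}$); the paper's proof makes exactly the same silent leap when it asserts that $\{U_n\}_{n=1}^\infty$ is a base of the uniformity of $C(X;Y)$, so flagging it and patching it via local compactness of $[0,\infty)$ --- the only $X$ to which the proposition is applied, in Proposition \ref{prop:subsequence_of_family} --- is an improvement rather than a defect. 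Second, a small omission on your side: you treat the sets $\{g\in F\,:\,\sup_{x\in K}d(f(x),g(x))<\delta\}$ as basic neighborhoods of $f$ in $F$, but the uniformity of uniform convergence on compact sets is built from the given uniformity of $Y$, which induces the topology pseudo-metrized by $d$ yet need not coincide with the $d$-uniformity. Since $F$ consists of continuous functions this is harmless, because the topology (though not the uniformity) of $C(X;Y)$ depends only on the topology of $Y$ \cite[Theorem 11 on p.~230]{Kelley}; this is precisely the reduction carried out in the first paragraph of the paper's proof, and you should state it before defining the $d_n$.
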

\begin{proof}
    By Theorem \cite[Theorem 11 on p.230]{Kelley}, while the uniformity of $C(X;Y)$ depends on the uniformity of $Y$, the topology of $C(X;Y)$ depends only on the topology induced by the uniformity of $Y$.
    Therefore, we can equip $Y$ with the uniformity induced by the pseudo-metric $d$ without changing the topology of $C(X;Y)$ nor $F$.

    Since $X$ is $\sigma$-compact, there exist compact sets $K_1\subset K_2\subset
    \dots\subset X$, such that $\bigcup_{n=1}^\infty K_n = X$.
    For each $n\in\mathbb{N}$
    we define
    \[
        U_n := \{(g,h) \in C(X;Y)^2 \ : \ (g(x),h(x)) \in \mathbb{B}_{d,1/n} \
        \forall x\in K_n \}.
    \]

    Since $\{\mathbb{B}_{d,1/n}\}_{n=1}^\infty$ is the basis of the uniformity 
    induced to $Y$ by the pseudo-metric $d$ and $\bigcup_{n=1}^\infty K_n = X$, we 
    obtain that $\{U_n\}_{n=1}^\infty$ is a basis for the uniformity of $C(X;Y)$.
    Hence, the pseudo-metrizability follows from the Metrization Theorem
    \cite[Theorem 13 on p. 186]{Kelley}.
\end{proof}

\subsubsection*{Uniform bounds}

We will now prove some bounds that are uniform in $\epsilon$ and $t$. These bounds 
are needed to prove the conditions \ref{thm:AA_a}-\ref{thm:AA_c}  of the Arzelà-Ascoli Theorem \ref{thm:Arzela-Ascoli}.

\begin{proposition}
    \label{prop:trunc_stuff_conservation}
    Suppose that Assumption \ref{ass:Astep3} holds. Let $t\in[0,\infty)$ and $\epsilon\in(0,1)$.
    Then for every $\alpha\in[\min\{-\beta,-\lambda_1 \}-r,1]$, it holds
    \begin{align}
        \label{eq:trunc_stuff_conservation}
        \int_{\sd}|x|^\alpha f_\epsilon(dx,t)\leq  \int_{\sd}|x|^\alpha f_{\epsilon,0}(dx),
    \end{align}
    and, moreover,
    \begin{align}
        \label{eq:trunc_stuff_bounded}
        \int_{\sd}|x|^\alpha f_\epsilon(dx,t)\leq  \int_{\sd}|x|^\alpha f_0(dx)<\infty.
    \end{align}
\end{proposition}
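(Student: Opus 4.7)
The plan is to test the weak formulation of the regularized coagulation equation against a non-negative, compactly supported function that coincides with $|x|^\alpha$ on the regions that matter, and then use the subadditivity of $t\mapsto t^\alpha$ for $\alpha\leq 1$ to show that the coagulation contribution is non-positive. Since $\fe(\cdot,t)$ is supported in $\trunc$ by \eqref{fVanish}, sums $x+y$ of two masses in the support satisfy $|x+y|\in[2\epsilon,4/\epsilon]$, so I only need $\varphi$ to equal $|x|^\alpha$ on a slightly larger compact annulus.

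Concretely, I fix $\alpha\in[\min\{-\beta,-\lambda_1\}-r,1]$ and pick $\varphi\in C_c(\sd)$ with $\varphi\geq0$ and $\varphi(x)=|x|^\alpha$ for all $x$ with $\epsilon/2\leq|x|\leq 4/\epsilon$. Inserting this time-independent $\varphi$ into \eqref{coagEq_regu_measure1}, the left-hand side becomes $\int_\sd|x|^\alpha\fe(dx,t)-\int_\sd|x|^\alpha\feo(dx)$, while the coagulation term reads
\[
\frac{1}{2}\int_0^t\!\!\int_\sd\!\!\int_\sd K(x,y)\bigl[\zeta_\epsilon(x+y)|x+y|^\alpha-|x|^\alpha-|y|^\alpha\bigr]\fe(dx,s)\fe(dy,s)\,ds.
\]
Because $|\cdot|$ is the $\ell^1$-norm on $\sd=[0,\infty)^d\setminus\{0\}$, we have $|x+y|=|x|+|y|$. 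Together with $\zeta_\epsilon\in[0,1]$ this gives the pointwise bound $\zeta_\epsilon(x+y)|x+y|^\alpha-|x|^\alpha-|y|^\alpha\leq(|x|+|y|)^\alpha-|x|^\alpha-|y|^\alpha$. For $\alpha\in[0,1]$ the standard subadditivity of $t\mapsto t^\alpha$ makes this $\leq 0$; for $\alpha\leq 0$ the inequality $(|x|+|y|)^\alpha\leq|x|^\alpha$ immediately yields the same conclusion. Since $K\geq 0$ and $\fe\geq 0$, the coagulation integral is non-positive, and \eqref{eq:trunc_stuff_conservation} follows.

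For the second inequality \eqref{eq:trunc_stuff_bounded}, I would observe that by construction in Assumption \ref{ass:Astep3}\ref{ass:Astep3_fam} the measure $\feo$ is the restriction of $f_0$ to $\trunc$, so $\int|x|^\alpha\feo(dx)\leq\int|x|^\alpha f_0(dx)$. Finiteness of the right-hand side comes from \eqref{eq:ini_cond} via the pointwise interpolation bound $|x|^\alpha\leq|x|^{\min(-\beta,-\lambda_1)-r}+|x|$, which holds for every $\alpha\in[\min(-\beta,-\lambda_1)-r,1]$ (split into $|x|\leq 1$ and $|x|>1$ and use monotonicity of $|x|^\alpha$ in $\alpha$ on each piece).

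I do not expect a real obstacle here: the regularization confines $\fe$ to a compact annulus on which $K$ is bounded, so all integrals in sight are finite, and the main technical point is simply to produce the cutoff $\varphi\in C_c(\sd)$ matching $|x|^\alpha$ on the relevant annulus, which is standard. The only subtlety is recalling that the $\ell^1$-norm makes $|x+y|=|x|+|y|$ on $\sd$, so the subadditivity argument applies to all $\alpha\leq 1$ at once, including negative values in the range permitted by the initial moment assumption \eqref{eq:ini_cond}.
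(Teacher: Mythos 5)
Your proposal is correct and follows essentially the same route as the paper: test the integrated regularized equation \eqref{coagEq_regu_measure1} with a nonnegative compactly supported cutoff equal to $|x|^\alpha$ on an annulus covering the support of $\fe$ and its coagulation images, use $0\leq\zeta_\epsilon\leq1$ together with $|x+y|^\alpha\leq|x|^\alpha+|y|^\alpha$ for $\alpha\leq1$ to see the coagulation term is non-positive, and then deduce \eqref{eq:trunc_stuff_bounded} from $\feo=f_0(\cdot\cap\trunc)$ and \eqref{eq:ini_cond}. The only cosmetic difference is your slightly larger annulus $[\epsilon/2,4/\epsilon]$ where the paper uses $[\epsilon,4/\epsilon]$; both work.
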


\begin{proof}
    Take $\varphi\in C_c(\sd)$, such that $\varphi\ge0$, and 
    $\varphi(x)=|x|^\alpha$ for all $|x|\in[\epsilon,4/\epsilon]$. 
    Then for all $x,y\in\trunc$
    \begin{align*}
        \zeta_\epsilon(x+y)\varphi(x+y)-\varphi(x)-\varphi(y)&\leq
        |x+y|^\alpha-|x|^\alpha-|y|^\alpha\\ &\leq |x|^\alpha +|y|^\alpha -
        |x|^\alpha-|y|^\alpha\\ &\leq 0,
    \end{align*}
    where we used $0\leq\zeta_\epsilon\leq 1$ in the first step and
    $\alpha\leq1$ in the second.  Then putting $\varphi$ into \eqref{coagEq_regu_measure1} and using \eqref{fVanish} 
    yield the desired result
    \[
        \int_{\sd}x^\alpha f_\epsilon(dx,t)\leq  \int_{\sd}x^\alpha f_\epsilon(dx,0).
    \]
     Then, the 
    uniform bound \eqref{eq:trunc_stuff_bounded} follows from
    \eqref{eq:trunc_stuff_conservation}, $f_{\epsilon,0}(\cdot)=f_0(\cdot\cap
    [\epsilon,2/\epsilon])$ and \eqref{eq:ini_cond}.
\end{proof}

\begin{corollary}
    \label{cor:bound_coag_op}
    Suppose that Assumption \ref{ass:Astep3} holds. Then there exists $C\in
    (0,\infty)$, such that
    \begin{align}
        \int_{\sd}\omega(x)f_\epsilon(dx,t)\leq C, \label{eq:unibound}
    \end{align}
    and
    \begin{align}
        \label{eq:coag_op_bounded}
           \int_{\sd}\int_{\sd} K(x,y)f_\epsilon(dx,t)
            f_\epsilon(dy,t)
        \leq C
    \end{align}
    %for all $\varphi\in C^1([0,\infty);C_c(\sd))$ and
    for all $\epsilon\in(0,1)$ and
    for all $t\in[0,\infty)$.
\end{corollary}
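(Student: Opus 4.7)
The plan is to reduce both bounds to the moment-conservation estimates of Proposition \ref{prop:trunc_stuff_conservation}, combined with the pointwise inequality $K(x,y) \leq 2c_2\,\omega(x)\omega(y)$ established in the proof of Proposition \ref{prop:weak_sol_well_defined}. Once the weighted first-moment bound \eqref{eq:unibound} is in hand with some constant $C_1$, the coagulation-operator bound \eqref{eq:coag_op_bounded} follows by squaring, and both can be absorbed into a single $C = \max(C_1, 2c_2 C_1^2)$.

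For \eqref{eq:unibound}, I would exploit the two pieces of the weight \eqref{eq:weight}. On $\{|x|\leq 1\}$, $\omega(x) = |x|^{\min(-\beta,-\lambda_1)}$; on $\{|x|>1\}$, $\omega(x) = |x|^{\max(\gamma+\lambda)} \leq |x|$ since Assumption \ref{ass:Astep3}\ref{ass:Astep3_K} gives $\gamma_j+\lambda_j<1$ for $j=1,2$. Hence
\begin{align*}
    \int_{\sd}\omega(x)f_\epsilon(dx,t) \leq \int_{\sd}|x|^{\min(-\beta,-\lambda_1)}f_\epsilon(dx,t) + \int_{\sd}|x|f_\epsilon(dx,t),
\end{align*}
and Proposition \ref{prop:trunc_stuff_conservation} with $\alpha=\min(-\beta,-\lambda_1)$ (admissible since $r>0$) and with $\alpha=1$ bounds each term by the corresponding $f_0$-moment. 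Finiteness of these $f_0$-moments comes from \eqref{eq:ini_cond}: on $\{|x|\leq 1\}$ one has $|x|^{\min(-\beta,-\lambda_1)}\leq |x|^{\min(-\beta,-\lambda_1)-r}$, and on $\{|x|>1\}$ one has $|x|^{\min(-\beta,-\lambda_1)}\leq 1+|x|$. This yields a constant $C_1\in(0,\infty)$ independent of $\epsilon$ and $t$.

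For \eqref{eq:coag_op_bounded}, I would verify $K(x,y)\leq 2c_2\,\omega(x)\omega(y)$ pointwise by a case analysis matching \eqref{eq:condK_sym2} to \eqref{eq:weight}: in the region $|x|,|y|\leq 1$ use $\min(-\beta,-\lambda_1)\leq -\beta$; in the mixed region $|y|\leq 1\leq |x|$ use $-\lambda_1\geq \min(-\beta,-\lambda_1)$ together with $\gamma_1+\lambda_1\leq\max(\gamma+\lambda)$; in the region $|x|,|y|\geq 1$ use $\gamma_2+\lambda_2\leq\max(\gamma+\lambda)$ combined with the standing hypothesis $-\lambda_2\leq\gamma_2+\lambda_2$. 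Integrating against $f_\epsilon(dx,t)f_\epsilon(dy,t)$ and invoking the bound just obtained on $\int\omega f_\epsilon$ gives $\int_{\sd}\int_{\sd} K f_\epsilon f_\epsilon \leq 2c_2 C_1^2$, and setting $C:=\max(C_1, 2c_2 C_1^2)$ concludes. There is no genuine obstacle here: the corollary is really a packaging of Proposition \ref{prop:trunc_stuff_conservation} with the kernel--weight comparison, and the only subtle point is that $r>0$ strictly in \eqref{eq:ini_cond} is essential for absorbing the small-size behaviour of $|x|^{\min(-\beta,-\lambda_1)}$ into the initial datum.
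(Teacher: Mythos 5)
Your proposal is correct and follows essentially the same route as the paper: \eqref{eq:unibound} is obtained from the uniform moment bound \eqref{eq:trunc_stuff_bounded} of Proposition \ref{prop:trunc_stuff_conservation} together with the form of the weight \eqref{eq:weight}, and \eqref{eq:coag_op_bounded} then follows from the kernel--weight comparison $K(x,y)\leq 2c_2\,\omega(x)\omega(y)$, i.e.\ Proposition \ref{prop:weak_sol_well_defined}. You merely spell out the case analysis and the admissibility of the exponents ($r>0$, $\gamma_j+\lambda_j<1$) that the paper leaves implicit.
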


\begin{proof}
    The bound \eqref{eq:unibound} follows from \eqref{eq:trunc_stuff_bounded} and \ref{eq:weight}.
    Then \eqref{eq:coag_op_bounded} follows
    from Propositions \ref{prop:weak_sol_well_defined}.
\end{proof}

\subsubsection*{Family maps into a compact metric space}

We will now prove everything needed for the condition \ref{thm:AA_b} of Arzelà-Ascoli by proving that
$\bar{F}\subset C([0,\infty),\mathcal{B})$ for a compact and metrizable subset
$\mathcal{B}\subset \M_b(\sd)$ defined as follows.

\begin{definition}
    We define
    \begin{align}
        \label{eq:codomain_B}
        \mathcal{B} := \{\mu \in \M_b(\sd) \ : \
        |\langle\phi,\mu\rangle|\leq1 \ \forall\phi\in V\},
    \end{align}
    where  $V:=\{\phi\in C_0(\sd) \ : \ \|\phi\|_\infty<\frac{1}{2C}\}$, and
    the constant $C\in(0,\infty)$ is the same as in \eqref{eq:unibound}.
\end{definition}

\begin{proposition}
    \label{prop:codomain_B_nice}
    The set $\mathcal{B}\subset \M_b(\sd)$ is compact and metrizable
    in the weak*-topology.
\end{proposition}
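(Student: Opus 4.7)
The plan is to recognise $\mathcal{B}$ as the polar of the open $0$-neighbourhood $V$ in the Banach space $C_0(\sd)$ and then invoke two standard results from functional analysis: the Banach-Alaoglu theorem to obtain weak*-compactness, and the weak*-metrizability of norm-bounded subsets of the dual of a separable Banach space to obtain metrizability.

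For the compactness part, I would first verify that $\mathcal{B}$ is contained in the closed norm-ball of radius $2C$ in $\M_b(\sd)$. This is a short scaling argument: if $\mu \in \mathcal{B}$ and $\phi \in C_0(\sd)$ satisfies $\|\phi\|_\infty \leq 1$, then for every $\delta > 0$ the rescaled function $\phi/(2C+\delta)$ lies in $V$, so $|\langle \phi,\mu\rangle| \leq 2C+\delta$; taking $\delta \to 0$ yields $\|\mu\| \leq 2C$. Next, $\mathcal{B}$ is weak*-closed as an intersection over $\phi \in V$ of the weak*-closed slabs $\{\mu : |\langle \phi, \mu\rangle| \leq 1\}$. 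Since the closed norm-ball of $\M_b(\sd) = C_0(\sd)^*$ is weak*-compact by Banach-Alaoglu, the weak*-closed subset $\mathcal{B}$ of that ball is weak*-compact as well.

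For the metrizability part, I would use that $\sd$ is a $\sigma$-compact locally compact Hausdorff space, which forces $C_0(\sd)$ to be separable (for example, by density of continuous functions with rational values built from a countable dense set of coordinates and compactly supported bumps, or via Stone--Weierstrass on the one-point compactification). Letting $\{\phi_n\}_{n \ge 1}$ be a countable norm-dense subset of the closed unit ball of $C_0(\sd)$, the formula
\begin{align*}
    d(\mu,\nu) := \sum_{n=1}^\infty 2^{-n}\bigl|\langle \phi_n, \mu-\nu\rangle\bigr|
\end{align*}
defines a metric on $\M_b(\sd)$ (it separates points because $\{\phi_n\}$ is dense in the unit ball of $C_0(\sd)$, which is norming for $\M_b(\sd)$) whose restriction to any norm-bounded set induces the weak*-topology. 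Applying this to the norm-bounded set $\mathcal{B}$ yields the desired metrizability.

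The only subtle point is to check that $d$ actually induces the weak*-topology on $\mathcal{B}$, rather than a strictly weaker topology. This follows from a standard three-epsilon argument: if $d(\mu_\alpha,\mu) \to 0$ then $\langle \phi_n,\mu_\alpha\rangle\to\langle \phi_n,\mu\rangle$ for every $n$, and the uniform norm bound $\|\mu\|,\|\mu_\alpha\| \le 2C$ together with density of $\{\phi_n\}$ in the unit ball of $C_0(\sd)$ upgrades this to $\langle \phi,\mu_\alpha\rangle\to\langle \phi,\mu\rangle$ for every $\phi\in C_0(\sd)$; the converse direction is immediate from dominated convergence in the defining series for $d$. No genuine obstacle is expected; the proof is essentially bookkeeping on classical dual-space facts.
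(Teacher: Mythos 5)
Your proof is correct and follows essentially the same route as the paper: the paper likewise observes that $V$ is a neighbourhood of $0$ (so $\mathcal{B}$ is a norm-bounded, weak*-closed set, essentially a polar) and invokes Banach--Alaoglu together with the separability of $C_0(\sd)$ for metrizability. You merely spell out in detail the standard facts the paper cites, so there is nothing substantive to add.
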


\begin{proof}
    We notice that $V$ is a neighborhood of $0$. Hence, the compactness and metrizability follows
    from Banach-Alaoglu Theorem as $C_0(\sd)$ is a separable 
    Banach space.
\end{proof}

To clarify the statements of the following Propositions we present the following 
Definition and Assumption. 

\begin{definition}
    We define for each $\phi\in C_0(\sd)$ a pseudo-norm
    \[
        p_\phi :\M_b(\sd)\to[0,\infty),\ p_\phi(\mu)=|\langle\phi,\mu\rangle|.
    \]
    We define $\mathcal{P}$ to be the family of these pseudo-norms,
    \[
        \mathcal{P} := \{p_\phi \ : \ \phi\in C_0(\sd)\}.
    \]
\end{definition}

\begin{assumption}
    \label{ass:compacta}
    Let $\M_b(\sd)$ be equipped with the uniformity $\mathcal{V}$ generated by the
    family $\mathcal{P}$ of pseudo-norms, and $C([0,\infty);\M_{b}(\sd))$ 
    with the uniformity of uniform convergence on compact sets. The topologies
    we consider in $\M_b(\sd)$ and $C([0,\infty);\M_{b}(\sd))$ are the ones 
    induced by their uniformities, respectively, the weak*-topology and the topology
    of uniform convergence on compact sets.
\end{assumption}

We are now ready to prove the above claimed result.
\begin{proposition}
    \label{prop:family_maps_to_metric_space}
    Suppose that Assumptions \ref{ass:Astep3} and \ref{ass:compacta} hold.
    Then $\bar{F}\subset C([0,\infty);\mathcal{B})$.
\end{proposition}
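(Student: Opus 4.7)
The plan is to verify two things: first, that every element of $F$ already takes values in $\mathcal{B}$; second, that this property is preserved when passing to the closure $\bar F$ in $C([0,\infty);\M_b(\sd))$ endowed with uniform convergence on compacta.

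For the first step, I would use the uniform bound \eqref{eq:unibound} from Corollary \ref{cor:bound_coag_op}. Fix $\omega f_\epsilon \in F$, $t \in [0,\infty)$, and any $\phi \in V$, so $\|\phi\|_\infty < 1/(2C)$. Then
\begin{align*}
    |\langle \phi, \omega f_\epsilon(t)\rangle|
    = \left| \int_{\sd} \phi(x)\omega(x) f_\epsilon(dx,t) \right|
    \leq \|\phi\|_\infty \int_{\sd} \omega(x) f_\epsilon(dx,t)
    \leq \frac{1}{2C}\cdot C = \frac{1}{2}.
\end{align*}
Taking the supremum over $\phi \in V$ shows $\omega f_\epsilon(t) \in \mathcal{B}$ for every $t$, so $F \subset C([0,\infty);\mathcal{B})$.

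For the second step, take any $\bar f \in \bar F$, any $t \in [0,\infty)$, and any $\phi \in V$. Apply Proposition \ref{prop:handling_closure} with $X = [0,\infty)$, $Z = C_0(\sd)$ (so $Z^* \cong \M_b(\sd)$), the compact set $K = \{t\}$, the single element $z = \phi$, and arbitrary $\delta > 0$; this produces some $f \in F$ with $|\langle \phi, \bar f(t) - f(t)\rangle| < \delta$. Combining with step one,
\begin{align*}
    |\langle \phi, \bar f(t)\rangle|
    \leq |\langle \phi, \bar f(t) - f(t)\rangle| + |\langle \phi, f(t)\rangle|
    < \delta + \tfrac{1}{2}.
\end{align*}
Letting $\delta \to 0$ yields $|\langle \phi, \bar f(t)\rangle| \leq 1/2 \leq 1$, so $\bar f(t) \in \mathcal{B}$. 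Finally, since $\bar F \subset C([0,\infty);\M_b(\sd))$ and $\mathcal{B}$ carries the weak-* subspace topology, the corestriction $\bar f\colon[0,\infty)\to\mathcal{B}$ remains continuous, giving $\bar f \in C([0,\infty);\mathcal{B})$.

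The only delicate point is the passage from $F$ to $\bar F$: because closure in $C([0,\infty);\M_b(\sd))$ is taken with respect to the uniformity generated by uniform convergence on compacta of all pseudo-norms $p_\phi$, one must test the bound defining $\mathcal{B}$ one functional $\phi$ at a time. Proposition \ref{prop:handling_closure} is tailored precisely for this, and this is the step that makes the whole argument work — no new estimate is required beyond the uniform moment bound already established in Corollary \ref{cor:bound_coag_op}.
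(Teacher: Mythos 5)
Your argument is correct and follows essentially the same route as the paper: values of $F$ lie in $\mathcal{B}$ by the uniform bound \eqref{eq:unibound}, and membership passes to $\bar{F}$ via Proposition \ref{prop:handling_closure} applied one test function $\phi\in V$ at a time (the paper fixes $\delta=\tfrac12$ and uses $C\|\phi\|_\infty+\tfrac12<1$, whereas you let $\delta\to0$; the difference is cosmetic). Your explicit remark on the continuity of the corestriction to $\mathcal{B}$ is a detail the paper leaves implicit, but the proofs are otherwise the same.
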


\begin{proof}
Fix $t\in[0,\infty)$ and take any $\bar{f}\in\bar{F}$ and $\phi\in C_0(\sd)$.
Then by Proposition \ref{prop:handling_closure} there exists $f_\phi \in F$,
such that
\[
    \left| \int_{\sd} \phi(x)\left[ \bar{f}(dx,t)-f_\phi(dx,t) \right] \right| < \frac{1}{2}.
\]

Then
\begin{align*}
    \left| \int_{\sd} \phi(x)\bar{f}(dx,t)\right|
    &\leq \left| \int_{\sd} \phi(x)f_\phi(dx,t)\right|
    + \left| \int_{\sd} \phi(x)\left[ \bar{f}(dx,t)-f_\phi(dx,t) \right] \right|\\
    & \leq C\|\phi\|_\infty+\frac{1}{2},
\end{align*}
where in the last step we used Corollary \ref{cor:bound_coag_op}. Thereby,
\[
\left| \int_{\sd} \phi(x)\bar{f}(dx,t)\right| \leq 1
\]
for all $\bar{f}\in \bar{F}$ and all
$\phi\in\{\phi\in C_0(\sd) \ : \ \|\phi\|_\infty<\frac{1}{2C}\}=V$.
Hence, $\bar{f}(\cdot,t)\in\mathcal{B}$.
\end{proof}

\subsubsection*{Difference bound for equicontinuity}
In order to prove the equicontinuity condition \ref{thm:AA_c} of Arzelà-Ascoli for the closure
of the regularized family $F$, we are going to need the following result.
\begin{proposition}
    \label{prop:difference_bound_for_equicontinuity}
    Suppose that Assumptions \ref{ass:Astep3} and \ref{ass:compacta} hold.
    Then there exists $C\in(0,\infty)$, such that
    \begin{align}
        \left|
        \int_{\sd}\phi(x)
        \left[
            f_\epsilon(dx,t_2)-f_\epsilon(dx,t_1)
        \right]
        \right|
        \leq C \|\phi\|_\infty|t_2-t_1| \label{eq:difference_bound_for_equicontinuity}
    \end{align}
    for all $\epsilon\in(0,1)$, for all $\phi\in C_0(\sd)$ and for all
    $t_1,t_2\in [0,\infty)$.
\end{proposition}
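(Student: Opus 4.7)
The plan is to apply the integrated form of the regularized coagulation equation from Remark \ref{re:regu_sol}, using the time-independent test function $\varphi(x,s) := \phi(x)$. Although the formulation in Definition \ref{def:requ_solu} is stated for $\varphi \in C^1(I;C_c(\sd))$, Remark \ref{re:regu_sol} explicitly extends validity to $\varphi \in C^1(I;C_0(\sd))$ (this is where the condition \eqref{fVanish} that $f_\epsilon$ is supported in $\{\epsilon \leq |x| \leq 2/\epsilon\}$ is crucial), so taking a constant-in-time $\phi \in C_0(\sd)$ is legitimate. This immediately yields
\begin{align*}
\int_{\sd} \phi(x) f_\epsilon(dx,t_2) - \int_{\sd} \phi(x) f_\epsilon(dx,t_1) = \frac{1}{2}\int_{t_1}^{t_2}\!\!\int_{\sd}\!\int_{\sd} K(x,y)\,\Psi_\epsilon(x,y)\, f_\epsilon(dx,s) f_\epsilon(dy,s)\, ds,
\end{align*}
where $\Psi_\epsilon(x,y) := \zeta_\epsilon(x+y)\phi(x+y) - \phi(x) - \phi(y)$.

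Next, I would estimate the integrand pointwise. Since $0 \leq \zeta_\epsilon \leq 1$, the triangle inequality gives the crude but sufficient bound $|\Psi_\epsilon(x,y)| \leq 3\|\phi\|_\infty$, which is uniform in $\epsilon$ and in $(x,y)$. Substituting this bound and pulling $\|\phi\|_\infty$ outside the integral, the estimate reduces to controlling
\begin{equation*}
\int_{\sd}\int_{\sd} K(x,y)\, f_\epsilon(dx,s) f_\epsilon(dy,s)
\end{equation*}
uniformly in $s \in [0,\infty)$ and $\epsilon \in (0,1)$. But this is precisely the content of Corollary \ref{cor:bound_coag_op}, equation \eqref{eq:coag_op_bounded}, which supplies a constant $C_0 < \infty$ independent of $s$ and $\epsilon$.

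Combining these observations, I would conclude
\begin{equation*}
\left| \int_{\sd} \phi(x)\bigl[f_\epsilon(dx,t_2) - f_\epsilon(dx,t_1)\bigr] \right| \leq \tfrac{3}{2} C_0\, \|\phi\|_\infty\, |t_2 - t_1|,
\end{equation*}
so that the choice $C = \tfrac{3}{2}C_0$ proves \eqref{eq:difference_bound_for_equicontinuity}. There is no serious obstacle at this stage of the argument: all the real work has been done upstream, first in establishing the uniform moment bound of Proposition \ref{prop:trunc_stuff_conservation}, and then in converting it via Proposition \ref{prop:weak_sol_well_defined} into the uniform bound \eqref{eq:coag_op_bounded} on the full coagulation double integral. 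The present Proposition is merely the mechanism that turns this uniform bound into an equicontinuity modulus (in fact, uniform Lipschitz continuity), which will feed directly into the verification of condition \ref{thm:AA_c} of the Arzelà--Ascoli Theorem in the subsequent compactness argument.
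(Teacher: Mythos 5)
Your proposal is correct and follows essentially the same route as the paper: apply the integrated regularized equation \eqref{coagEq_regu_measure1} with a time-independent test function (extended to $C_0(\sd)$ via Remark \ref{re:regu_sol}), bound the bracket by $3\|\phi\|_\infty$ using $0\leq\zeta_\epsilon\leq1$, and control the coagulation double integral uniformly in $\epsilon$ and $s$ by Corollary \ref{cor:bound_coag_op}. No gaps.
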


\begin{proof}
    Fix $\epsilon\in(0,1)$, $\phi\in C_0(\sd)$ and $t_1,t_2\in [0,\infty)$.
    Due to the symmetry $t_1\leftrightarrow t_2$ in
    \eqref{eq:difference_bound_for_equicontinuity}, we may assume without loss
    of generality that $t_1\leq t_2$. The time integrated regularized coagulation equation
    \eqref{coagEq_regu_measure1} with the time independent test function
    $\phi$ implies
    \begin{align}
        \label{eq:int_trunc_lol_lol}
        &\int_{\sd}\phi(x) f_\epsilon(dx,T) =
        \int_{\sd}\phi(x) f_\epsilon(dx,0)\nonumber\\
        &+\frac{1}{2}\int_0^T\int_{\sd}\int_{\sd}
        K(x,y)f_\epsilon(dx,t)f_\epsilon(dy,t)
        \left[
            \zeta_\epsilon(x+y)\phi(x+y)-\phi(x)-\phi(y)
        \right]
        dt.
    \end{align}
    Recall that $0\leq\zeta_\epsilon\leq1$. Subtracting \eqref{eq:int_trunc_lol_lol} at $T=t_2$ from \eqref{eq:int_trunc_lol_lol} at $T=t_1$ implies
    \begin{align}
        \nonumber
        \left|\int_{\sd}\phi(x)
        \left[f_\epsilon(dx,t_2)-f_\epsilon(dx,t_1)\right]\right| 
        %\\
        %%
        %\nonumber
        %&\left|\frac{1}{2}\int_{t_1}^{t_2}\int_{\sd}\int_{\sd}
        %K(x,y)f_\epsilon(dx,s)f_\epsilon(dy,s)
        %\left[\zeta_\epsilon(x+y)\phi(x+y)-\phi(x)-\phi(y)\right]ds\right|
        %\\
        %%
        %\nonumber
        %\leq&
        %\frac{1}{2}\int_{t_1}^{t_2}\int_{\sd}\int_{\sd}
        %K(x,y)f_\epsilon(dx,s)f_\epsilon(dy,s)
        %\left|\zeta_\epsilon(x+y)\phi(x+y)-\phi(x)-\phi(y)\right|ds
        %\\
        &\leq \frac{3}{2} \|\phi\|_\infty
        \int_{t_1}^{t_2}\int_{\sd}\int_{\sd}
        K(x,y)f_\epsilon(dx,s)f_\epsilon(dy,s)ds
        \\
        \label{eq:thing11}
        &\leq C\|\phi\|_\infty|t_2-t_1|,
    \end{align}
    where Corollary \ref{cor:bound_coag_op} was used in the last step.
\end{proof}

\subsubsection*{Closure of the family is compact}

We are now ready to prove that the closure of $F$ is compact.

\begin{proposition}
    \label{prop:compactness}
    Suppose that Assumptions \ref{ass:Astep3} and \ref{ass:compacta} hold.
    Then $\bar{F}$ is a compact subset of $C([0,\infty);\M_{b}(\sd))$.
\end{proposition}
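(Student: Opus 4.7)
The plan is to apply the Arzelà-Ascoli Theorem \ref{thm:Arzela-Ascoli} with $X=[0,\infty)$ and $Y=\M_b(\sd)$ endowed with the weak*-uniformity. The space $[0,\infty)$ is $T_3$ and locally compact, and $\M_b(\sd)$ is Hausdorff in the weak*-topology because the family $\mathcal{P}$ of pseudo-norms $p_\phi$, $\phi\in C_0(\sd)$, separates points (by density of $C_c(\sd)$ in $C_0(\sd)$ and the Riesz-Markov-Kakutani representation). Thus I need only verify the three conditions \ref{thm:AA_a}--\ref{thm:AA_c} for $\bar F$.

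Condition \ref{thm:AA_a} holds trivially: $\bar F$ is closed since it is defined as the closure of $F$. Condition \ref{thm:AA_b} follows directly from the preparatory results already proved: by Proposition \ref{prop:family_maps_to_metric_space}, $\bar F[t]\subset \mathcal{B}$ for every $t\in[0,\infty)$, and by Proposition \ref{prop:codomain_B_nice} the set $\mathcal{B}$ is compact in the weak*-topology; hence the closure of $\bar F[t]$ is compact.

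The main work is equicontinuity \ref{thm:AA_c}, and this is where I would spend the bulk of the proof. A neighborhood basis for the uniformity of $\M_b(\sd)$ at any point is given by intersections of finitely many sets of the form $\mathbb{B}_{p_\phi,\delta}$ with $\phi\in C_0(\sd)$ and $\delta>0$, so it suffices to treat a single such basic entourage. Fix $t_0\in[0,\infty)$, $\phi\in C_0(\sd)$ and $\delta>0$. For any $f_\epsilon\in F$, Proposition \ref{prop:difference_bound_for_equicontinuity} gives
\begin{align*}
    p_\phi\bigl(f_\epsilon(t)-f_\epsilon(t_0)\bigr)\leq C\|\phi\|_\infty|t-t_0|,
\end{align*}
so the neighborhood $U:=\{t\in[0,\infty):|t-t_0|<\delta/(2C\|\phi\|_\infty)\}$ works for every element of $F$. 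For an arbitrary $\bar f\in\bar F$, I would apply Proposition \ref{prop:handling_closure} with the compact set $K=\overline{U}\cap[0,\infty)$ (or just the pair $\{t_0,t\}$), the test function $\phi$, and any tolerance $\delta/4$, obtaining $f_\epsilon\in F$ such that $p_\phi(\bar f(s)-f_\epsilon(s))<\delta/4$ for all $s\in K$. A three-term triangle-inequality split
\begin{align*}
    p_\phi(\bar f(t)-\bar f(t_0))\leq p_\phi(\bar f(t)-f_\epsilon(t))+p_\phi(f_\epsilon(t)-f_\epsilon(t_0))+p_\phi(f_\epsilon(t_0)-\bar f(t_0))
\end{align*}
then yields $p_\phi(\bar f(t)-\bar f(t_0))<\delta$ for all $t\in U$, uniformly in $\bar f\in\bar F$. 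This establishes equicontinuity and completes the verification of the Arzelà-Ascoli hypotheses, so $\bar F$ is compact.

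The only potential subtlety I anticipate is the transfer of the uniform bound from $F$ to $\bar F$: the estimate of Proposition \ref{prop:difference_bound_for_equicontinuity} is stated only for the regularized solutions, and a direct closure argument is not quite immediate because equicontinuity is a statement uniform over $\bar F$. Proposition \ref{prop:handling_closure} is tailor-made to plug this gap, and checking that $K$ can be chosen compact in the needed way (for instance $K=\{t_0\}\cup\overline{U}$, shrinking $U$ so its closure lies in any fixed compact neighborhood of $t_0$) is the only nontrivial bookkeeping.
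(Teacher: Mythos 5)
Your proposal is correct and follows essentially the same route as the paper: Arzelà--Ascoli with conditions (a) and (b) supplied by Propositions \ref{prop:codomain_B_nice} and \ref{prop:family_maps_to_metric_space}, and equicontinuity obtained by combining the Lipschitz-in-time bound of Proposition \ref{prop:difference_bound_for_equicontinuity} on $F$ with Proposition \ref{prop:handling_closure} and a three-term triangle inequality to pass to $\bar F$. The only cosmetic difference is that you reduce to a single pseudo-norm $p_\phi$ and intersect neighborhoods afterwards, whereas the paper treats the finite family $\phi_1,\dots,\phi_n$ of a basic entourage with one common $\delta$; both are equivalent.
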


\begin{proof}
    Since the closure $\bar{F}$ is
    closed by definition, it satisfies the condition \ref{thm:AA_a} of Arzelà-Ascoli
    \ref{thm:Arzela-Ascoli}.  The
    weak*-topology of $\M_b(\sd)\cong C_0(\sd)^*$ is same as the
    topology induced by the uniformity $\mathcal{V}$ of $\M_b(\sd)$. Hence,
    condition \ref{thm:AA_b}  is satisfied by Propositions \ref{prop:codomain_B_nice} and
    \ref{prop:family_maps_to_metric_space}.

    It remains to prove that $\bar{F}$ is equicontinuous, i.e., the condition \ref{thm:AA_c}  
    of  Arzelà-Ascoli \ref{thm:Arzela-Ascoli}. Take any $t\in[0,\infty)$ and
    $V\in\mathcal{V}$. Then by the  
    definition of the uniformity of $C([0,\infty),\M_b(\sd))$, there exist
    finite number of test functions $\phi_1,\phi_2,\dots,\phi_n\in C_0(\sd)$
    and $r_1,r_2,\dots,r_n\in(0,\infty)$, such that
    \begin{align}
        \label{eq:uniformity_base_set}
        B:=\bigcap_{j=1}^n \mathbb{B}_{p_{\phi_j},r_j}\subset V.
    \end{align}
    Let $\delta>0$ and $U_\delta=(t-\delta,t+\delta)\cap[0,\infty)$. Since
    $U_\delta$ is open in $[0,\infty)$, by \eqref{eq:uniformity_base_set} and
    definition of equicontinuity, 
    it remains to be proven that there exists $\delta>0$, such that $\bar{f}(U_\delta)\subset
    B[\bar{f}(\cdot,t)]$ for all $\bar{f}\in\bar{F}$,
    where
    \begin{align*}
        B[\bar{f}(\cdot,t)] &= \bigcap_{j=1}^n \{\mu \in \M(\sd) \ : \
        |\langle\phi_j,\mu-\bar{f}(\cdot,t)\rangle|<r_j\}, \text{ and}\\
        \bar{f}(U_\delta) &= \{\bar{f}(\cdot,s) \ : \ s\in U_\delta \}.
    \end{align*}
    Take $\bar{f}\in\bar{F}$ and $s\in U_\delta$. Let $j\in\{1,2,\dots n\}$.
    By Proposition \ref{prop:handling_closure} there exists
    $f \in F$, such that
    \[
        \left|
            \int_{\sd} \phi_j(x)
                \left[
                    \bar{f}(dx,s)-f(dx,s)
                \right]
        \right| < \delta
    \]
    for all $s\in [0,t+\delta]$.

    Then by telescoping and triangle inequality, for all $s\in U_\delta \subset
    [0,t+\delta]$ there holds
    \begin{align*}
        |\langle
            \phi_j,\bar{f}(\cdot,s)-\bar{f}(\cdot,t)
        \rangle|
        &\leq
        \left|
            \int_{\sd}\phi_j(x)
            \left[
                \bar{f}(dx,s)-f(dx,s)
            \right]
        \right|
        +
        \left|
            \int_{\sd}\phi_j(x)
            \left[
                f(dx,s)-f(dx,t)
            \right]
        \right|
        \\
        &+
        \left|
            \int_{\sd}\phi_j(x)
            \left[
                \bar{f}(dx,t)-f(dx,t)
            \right]
        \right|
        \\
        &\leq 2\delta + C\|\phi_j\|_\infty2\delta,
    \end{align*}
    where in the last step we used also Proposition
    \ref{prop:difference_bound_for_equicontinuity} and
    $\sup\{|s-t| \ : \ s,t\in U_\delta \}\leq 2\delta$.
    We are free to choose $\delta$ so that
    \[
     2\delta
     \left(
        1+ C \max\{\|\phi_1\|_\infty,\dots,\|\phi_n\|_\infty\}
     \right)
     <\min\{r_1,\dots,r_n\}
    \]
    holds. Hence, $\bar{f}(U_\delta)\subset B[\bar{f}(\cdot,t)]$ for all
    $\bar{f}\in\bar{F}$. This proves the equicontinuity of $\bar{F}$.

    The compactness of  $\bar{F}$ follows  now from the Arzelà-Ascoli Theorem
     \ref{thm:Arzela-Ascoli}.
 \end{proof}

 \subsubsection*{The family has a convergent subsequence}

 We are finally ready to finish the \ref{step:3} by proving the following 
 Proposition \ref{prop:subsequence_of_family}. 

\begin{proposition}
    \label{prop:subsequence_of_family} 

    Suppose that Assumptions \ref{ass:Astep3} and \ref{ass:compacta} hold. 
    Then there exist $\bar{f}\in \bar{F}\cap C([0,\infty);\M_{+,b}(\sd))$ and
    a strictly decreasing sequence  $(\epsilon_n)_{n=1}^\infty\subset (0,1)$,
    such that $\lim_{n\to\infty} \epsilon_n=0$, and $(\omega
    f_{\epsilon_n})_{n=1}^\infty\subset{F}$ converges to $\bar{f}$ in
    $C([0,\infty);\M_{b}(\sd))$. Moreover,
    \begin{align}
        \label{eq:family_converges}
        \lim_{n\to\infty} \sup_{t\in[0,T]} 
        \left| 
            \int_{\sd} \phi(x)
            \left[
                \omega(x)f_{\epsilon_n}(dx,t)-\bar{f}(dx,t)
            \right]
        \right|=0, 
    \end{align}
    for every $T\ge0$ and $\phi \in C_0(\sd)$.
\end{proposition}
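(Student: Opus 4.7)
The plan is to extract the desired subsequence by combining the compactness of $\bar{F}$ established in Proposition \ref{prop:compactness} with a metrizability argument based on Proposition \ref{prop:sequentally_compactness}, and then to verify positivity of the limit by a standard weak$^*$ argument.

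First I would set up the metrizability. The base space $[0,\infty)$ is $\sigma$-compact via $[0,\infty) = \bigcup_{n\in\N}[0,n]$. By Proposition \ref{prop:family_maps_to_metric_space} we have $\bar{F}\subset C([0,\infty);\mathcal{B})$, and by Proposition \ref{prop:codomain_B_nice} the subspace $\mathcal{B}\subset\M_b(\sd)$ is compact and metrizable in the weak$^*$ topology, so there exists a metric $d$ on $\mathcal{B}$ inducing this topology. Applying Proposition \ref{prop:sequentally_compactness} with $X=[0,\infty)$ and $Y=\mathcal{B}$ then shows that the topology on $\bar{F}$, viewed as a subspace of $C([0,\infty);\mathcal{B})$ with the topology of uniform convergence on compact sets, is pseudo-metrizable. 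Combined with the compactness established in Proposition \ref{prop:compactness}, this gives that $\bar{F}$ is sequentially compact (a standard fact for compact pseudo-metrizable spaces).

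Next I would carry out the extraction. Pick any strictly decreasing sequence $(\tilde{\epsilon}_n)_{n\in\N}\subset(0,1)$ with $\tilde{\epsilon}_n\to 0$. The sequence $(\omega f_{\tilde{\epsilon}_n})\subset F\subset \bar{F}$ then admits, by sequential compactness, a subsequence $(\omega f_{\epsilon_n})$ converging in $C([0,\infty);\M_b(\sd))$ to some $\bar{f}\in\bar{F}$. By definition of the topology of uniform convergence on compact sets in terms of the pseudo-norms $p_\phi$, this convergence is exactly \eqref{eq:family_converges}: for each $T\ge 0$ and $\phi\in C_0(\sd)$, $\sup_{t\in[0,T]} p_\phi(\omega f_{\epsilon_n}(\cdot,t)-\bar{f}(\cdot,t))\to 0$.

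It remains to show $\bar{f}(\cdot,t)\in\M_{+,b}(\sd)$ for every $t\ge 0$, which is the one place requiring a small argument beyond the abstract machinery. Boundedness is automatic since $\bar{f}(\cdot,t)\in\mathcal{B}\subset\M_b(\sd)$. For positivity, fix $t$ and any non-negative $\phi\in C_0(\sd)$. Since $f_{\epsilon_n}(\cdot,t)\in\M_{+,b}(\sd)$ and $\omega>0$, we have $\langle\phi,\omega f_{\epsilon_n}(\cdot,t)\rangle\ge 0$ for every $n$. Passing to the limit using \eqref{eq:family_converges} at that single $t$ gives $\langle\phi,\bar{f}(\cdot,t)\rangle\ge 0$ for every non-negative $\phi\in C_0(\sd)$, whence $\bar{f}(\cdot,t)\in\M_{+,b}(\sd)$ by the Riesz representation theorem. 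This yields $\bar{f}\in\bar{F}\cap C([0,\infty);\M_{+,b}(\sd))$, completing the proof. The main subtle point, more bookkeeping than obstacle, is to be careful that sequential compactness really follows from compactness plus the pseudo-metrizability given by Proposition \ref{prop:sequentally_compactness}, since the underlying topology is not a priori Hausdorff; but since positivity and boundedness of $\bar{f}$ only require values at each fixed $t$, the pseudo-metric is enough for our purposes.
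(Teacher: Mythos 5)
Your proposal is correct and follows essentially the same route as the paper: compactness of $\bar{F}$ (Proposition \ref{prop:compactness}) plus metrizability of $\mathcal{B}$ and Proposition \ref{prop:sequentally_compactness} yield sequential compactness, the extracted limit satisfies \eqref{eq:family_converges} by unwinding the uniformity of uniform convergence on compact sets, and positivity of $\bar{f}(\cdot,t)$ follows by passing to the limit against non-negative $\phi\in C_0(\sd)$ and invoking the Riesz representation theorem. The only cosmetic difference is that the paper extracts the subsequence from $(\omega f_{1/n})$ while you start from an arbitrary strictly decreasing null sequence, which is equivalent.
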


\begin{proofof}[Proof of Proposition \ref{prop:subsequence_of_family}]
    By \ref{prop:family_maps_to_metric_space} and \ref{prop:compactness},  
    the closure $\bar{F}$ is compact in 
    $C([0,\infty);\mathcal{B})$ when the uniformity of
    $C([0,\infty);\mathcal{B})$ is induced from $C([0,\infty);\M_b(\sd))$.
    The topology of $\mathcal{B}$ is metrizable by Proposition
    \ref{prop:codomain_B_nice}.  We obtain sequential
    compactness of $\bar{F}$ from Proposition \ref{prop:sequentally_compactness}.
    Then there exist $\bar{f}\in \bar{F}$ and a subsequence $(\omega
    f_{\epsilon_n})_{n=1}^\infty$ of $(\omega f_{1/n})_{n=1}^\infty\subset F$,
    such that $(\omega f_{\epsilon_n})_{n=1}^\infty$ converges to $\bar{f}$.
    Therefore by the definition of the uniformity of uniform convergence on compact sets, for any $r>0$, $T\ge0$ and $\phi
    \in C_0(\sd)$, there exists $N>0$, such that for all $n>N$ we have
    \begin{align*}
        &\omega f_{\epsilon_n} \in \{\bar{g} \in \bar{F} \ | \
            (\bar{f}(\cdot,t),\bar{g}(\cdot,t))\in \mathbb{B}_{p_\phi,r} \ \forall t\in
        [0,T]\}
        \\
        &\iff \sup_{t\in[0,T]} \left| \int_{\sd}
        \phi(x)\left[w(x)f_{\epsilon_n}(dx,t)-\bar{f}(dx,t)\right]\right|<r.
    \end{align*}
    This proves \eqref{eq:family_converges}.
    Finally, $(\omega\fen)_{n=1}^\infty\subset
    C([0,\infty);\M_{+,b}(\sd))$ together with \eqref{eq:family_converges} imply
    for each $t\ge0$ that $\langle \phi, \bar{f}(\cdot,t)\rangle\ge0$ for every
    $\phi\in C_0(\sd)$ taking only positive values. Then the Riesz representation
    Theorem guarantees that $\bar{f}\in
    C([0,\infty);\M_{+,b}(\sd))$.
\end{proofof}

\subsection{Step 4: the limiting measure-valued function satisfies the coagulation equation}

In this subsection we will do the \ref{step:4}, followed by the collection of
all the pieces in order to prove the existence Theorem \ref{thm:existence}. 

We start by giving the following Assumption \ref{ass:candidate} which gives 
us the candidate solution.

\begin{assumption}
    \label{ass:candidate}
    Suppose that Assumptions \ref{ass:Astep3} and \ref{ass:compacta} hold.
    Let $(\omega f_{\epsilon_n})_{n=1}^\infty\subset
    C([0,\infty);\M_{+,b}(\sd))$ and $\bar{f}\in C([0,\infty);\M_{+,b}(\sd))$
    be the converging sequence and its limit, respectively, obtained from
    Proposition \ref{prop:subsequence_of_family}.  We denote our candidate
    solution $f:[0,\infty)\to\M_+(\sd)$ to Theorem \ref{thm:existence}
    by $f:=\frac{1}{\omega}\bar{f}$.
\end{assumption}

\subsubsection*{Initial condition}

Next we prove that our candidate satisfies the initial condition
\eqref{eq:agrees_initial}. 

\begin{proposition}
    \label{prop:candidate_initial}
    Suppose that Assumption \ref{ass:candidate} holds. Then $f(\cdot,0) = f_0(\cdot)$.
\end{proposition}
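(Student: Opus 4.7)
The plan is to exploit the convergence from Proposition \ref{prop:subsequence_of_family} specialized at $t=0$, combined with the explicit formula for the regularized initial data $f_{\epsilon_n,0}(\cdot) = f_0(\cdot \cap \{\epsilon_n \leq |x| \leq 2/\epsilon_n\})$. First I would fix an arbitrary test function $\phi \in C_0(\sd)$ and observe that \eqref{eq:family_converges} applied with $T = 0$ (or any $T \ge 0$ and $t=0$) yields
\begin{align*}
    \int_{\sd} \phi(x)\, \omega(x)\, f_{\epsilon_n,0}(dx) \;\longrightarrow\; \int_{\sd} \phi(x)\, \bar{f}(dx,0)
\end{align*}
as $n \to \infty$. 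Rewriting the left-hand side explicitly gives
\begin{align*}
    \int_{\sd} \phi(x)\, \omega(x)\, f_{\epsilon_n,0}(dx)
    = \int_{\sd} \phi(x)\, \omega(x)\, \ind_{\{\epsilon_n \leq |x| \leq 2/\epsilon_n\}}(x)\, f_0(dx).
\end{align*}

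Next I would apply the Dominated Convergence Theorem to the right-hand side. The integrand converges pointwise $f_0$-almost everywhere to $\phi(x)\omega(x)$ as $\epsilon_n \to 0$, and it is dominated in absolute value by $\|\phi\|_\infty\, \omega(x)$. The domination is $f_0$-integrable: indeed, using the definition \eqref{eq:weight} of $\omega$ together with the assumption \eqref{eq:ini_cond}, one has $\omega(x) \leq |x|^{\min(-\beta,-\lambda_1) - r}$ for $|x| \leq 1$ (since $r > 0$) and $\omega(x) \leq |x|$ for $|x| \geq 1$ (since $\gamma_j + \lambda_j \leq 1$), and therefore $\int_{\sd} \omega(x)\, f_0(dx) < \infty$. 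Hence
\begin{align*}
    \int_{\sd} \phi(x)\, \omega(x)\, f_{\epsilon_n,0}(dx) \;\longrightarrow\; \int_{\sd} \phi(x)\, \omega(x)\, f_0(dx).
\end{align*}

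Combining the two limits, I get $\int_{\sd} \phi(x)\,\bar{f}(dx,0) = \int_{\sd} \phi(x)\,\omega(x)\,f_0(dx)$ for every $\phi \in C_0(\sd)$. By the Riesz--Markov--Kakutani Representation Theorem, this uniquely identifies $\bar{f}(\cdot, 0) = \omega f_0$ as elements of $\M_b(\sd)$. Dividing by $\omega$ yields $f(\cdot, 0) = \frac{1}{\omega}\bar{f}(\cdot,0) = f_0$, which is the desired identity. The only non-routine check is the $f_0$-integrability of $\omega$, which is precisely where the slack parameter $r > 0$ in the moment condition \eqref{eq:ini_cond} is used; this is what I expect to be the most delicate point, as it is the bridge between the weighted convergence produced in Step 3 and the unweighted initial condition stated in Definition \ref{def:time-dep-sol-thesis}.
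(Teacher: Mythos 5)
Your proof is correct. It rests on the same engine as the paper's argument, namely the uniform-in-time convergence \eqref{eq:family_converges} from Proposition \ref{prop:subsequence_of_family} evaluated at $t=0$ together with the explicit form $f_{\epsilon_n,0}(\cdot)=f_0(\cdot\cap\{\epsilon_n\leq|x|\leq2/\epsilon_n\})$, but the execution differs slightly: the paper tests against $\varphi\in C_c(\sd)$ and telescopes, so that the truncation term vanishes \emph{exactly} once $\supp\varphi\subset\trunc$, and only the convergence of $\langle\varphi/\omega,\omega f_{\epsilon_n}(\cdot,0)\rangle$ is needed; you instead test against $\phi\in C_0(\sd)$, remove the truncation by Dominated Convergence with dominating function $\|\phi\|_\infty\,\omega$, and identify $\bar f(\cdot,0)=\omega f_0$ in $\M_b(\sd)$ by $C_0$-duality before dividing by $\omega$. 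Your route needs the extra check $\int_{\sd}\omega(x)f_0(dx)<\infty$, which you justify correctly from \eqref{eq:ini_cond} and \eqref{eq:weight} (this is also exactly the condition \eqref{CondInVal} required of admissible initial data, and the same deduction is used in Proposition \ref{prop:candidate_bounds}), and in return it gives the marginally stronger conclusion that $\omega f_{\epsilon_n,0}\to\omega f_0$ against all of $C_0(\sd)$, not just $C_c(\sd)$; the paper's compact-support trick avoids both the integrability check and the DCT step. Either way the final identification $f(\cdot,0)=\tfrac{1}{\omega}\bar f(\cdot,0)=f_0$ is sound, since $\omega$ is continuous and strictly positive.
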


\begin{proof}
    Take any $\varphi\in C_c(\sd)$. Then by telescoping and triangle inequality we
    obtain
    \begin{align*}
        \left| \int_{\sd} \varphi(x)\left[ f(dx,0)-f_0(dx) \right]  \right|
        \leq&
        \left| \int_{\sd} \varphi(x)\left[ f_{\epsilon_n}(dx,0)-f_0(dx) \right]  \right|
        \\
        &+\left| \int_{\sd} \varphi(x)\left[ f_{\epsilon_n}(dx,0)-f(dx,0) \right]  \right|
    \end{align*}

    Recall that $f_{\epsilon}(\cdot,0)=f_0(\cdot\cap \trunc)$.
    Continuity of $\omega$ and $\omega(x)\ne0$ for all $x\in\sd$ imply
    $\frac{\varphi}{\omega}\in C_c(\sd)$. Then by \eqref{eq:family_converges} and
    the fact that $\varphi$ has a compact support we obtain from the above inequality
    at the limit $n\to\infty$ that
    \begin{align*}
        \left| \int_{\sd} \varphi(x)\left[ f(dx,0)-f_0(dx) \right]  \right|=0.
    \end{align*}
    Hence, the result follows from the Riesz Representation Theorem.
\end{proof}

\subsubsection*{Limit of the regularized coagulation equation}
%\subsubsection*{Moment estimates}

We want to prove that each term in the regularized coagulation equation
\eqref{coagEq_regu_measure2} converges to a corresponding term in the
regularized coagulation equation \eqref{eq:thesis_weak}. In order to take these
limits we need the following Proposition \ref{prop:candidate_bounds} to control
needed moments.

\begin{proposition} \label{prop:candidate_bounds} Suppose that Assumption
    \ref{ass:candidate} holds. Then for any
    $\alpha\in[\min(-\beta,-\lambda _2)-r,1]$ and $t\ge0$ there holds
    \begin{align}
        \label{eq:candidate_bounded1}
        \int_{\sd} |x|^\alpha f(dx,t) \leq \int_{\sd} |x|^\alpha f_0(dx),
    \end{align}
    and
    \begin{align}
        \label{eq:candidate_bounded2}
        \int_{\sd} \omega(x)f(dx,t) \leq C,
    \end{align}
    for some finite constant $C>0$ independent of $t$.
\end{proposition}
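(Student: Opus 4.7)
The plan is to obtain both bounds by passing to the limit in the analogous uniform estimates already established for the regularized solutions: namely, \eqref{eq:trunc_stuff_bounded} of Proposition \ref{prop:trunc_stuff_conservation} for the first inequality and \eqref{eq:unibound} of Corollary \ref{cor:bound_coag_op} for the second. The obstruction is that neither $|x|^\alpha$ nor $\omega(x)$ lies in $C_0(\sd)$, so one cannot simply test the weak*-convergence \eqref{eq:family_converges} against them. The strategy is to approximate from below by compactly supported functions so that \eqref{eq:family_converges} applies, and then conclude by monotone convergence.

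More concretely, I would fix a non-decreasing sequence of cutoffs $\chi_k\in C_c(\sd)$ with $0\leq \chi_k\leq 1$ and $\chi_k(x)=1$ on $\{1/k\leq |x|\leq k\}$, so that $\chi_k(x)\nearrow 1$ pointwise on $\sd$. For the first bound, set $\varphi_k(x):=|x|^\alpha\chi_k(x)\in C_c(\sd)$. Since $\omega$ is continuous and strictly positive on $\sd$, the quotient $\varphi_k/\omega$ also lies in $C_c(\sd)\subset C_0(\sd)$, and therefore qualifies as a test function for \eqref{eq:family_converges}. Using $\bar f=\omega f$, this convergence gives, for each fixed $t\geq 0$,
\[
\int_{\sd}\varphi_k(x)f(dx,t)
=\lim_{n\to\infty}\int_{\sd}\frac{\varphi_k(x)}{\omega(x)}\,\omega(x)f_{\epsilon_n}(dx,t)
=\lim_{n\to\infty}\int_{\sd}\varphi_k(x)f_{\epsilon_n}(dx,t).
\]
By Proposition \ref{prop:trunc_stuff_conservation}, each approximation on the right is bounded above by $\int_{\sd}|x|^\alpha f_{\epsilon_n,0}(dx)\leq \int_{\sd}|x|^\alpha f_0(dx)$, uniformly in $n$. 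Letting $k\to\infty$ and invoking the monotone convergence theorem (since $\varphi_k\nearrow |x|^\alpha$) then yields \eqref{eq:candidate_bounded1}.

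The second bound \eqref{eq:candidate_bounded2} follows by the same scheme with the choice $\psi_k:=\omega\chi_k\in C_c(\sd)$, so that $\psi_k/\omega=\chi_k\in C_c(\sd)$ and $\psi_k\nearrow \omega$. The analogous convergence argument combined with the uniform bound \eqref{eq:unibound} of Corollary \ref{cor:bound_coag_op} gives $\int_{\sd}\psi_k(x)f(dx,t)\leq C$, and monotone convergence as $k\to\infty$ concludes. I expect the only slightly delicate point to be in checking that the approximants $\varphi_k/\omega$ and $\psi_k/\omega$ are genuine $C_0(\sd)$ test functions so that \eqref{eq:family_converges} applies, but since $\omega$ is continuous and strictly positive on $\sd$ and the cutoffs $\chi_k$ have compact support away from $0$, this is immediate. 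Everything else is a routine pointwise-limit-and-monotone-convergence argument, and the uniformity of the bound in $t$ in \eqref{eq:candidate_bounded2} is inherited directly from the $t$-uniformity of \eqref{eq:unibound}.
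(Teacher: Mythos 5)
Your proposal is correct and follows essentially the same route as the paper: compactly supported cutoffs, division by the strictly positive continuous weight $\omega$ to produce $C_c(\sd)$ test functions for the weak* convergence \eqref{eq:family_converges}, the uniform estimates of Proposition \ref{prop:trunc_stuff_conservation}, and the Monotone Convergence Theorem. The only cosmetic difference is that the paper obtains \eqref{eq:candidate_bounded2} directly from \eqref{eq:candidate_bounded1} together with \eqref{eq:ini_cond} and the definition \eqref{eq:weight} of $\omega$, whereas you re-run the limiting argument with the uniform bound \eqref{eq:unibound} of Corollary \ref{cor:bound_coag_op}; both are equally valid.
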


\begin{proof}
    For each $k\in\N$ take $\varphi_k\in C_c(\sd)$, such that
    $0\leq\varphi_k\leq1$ and $\varphi_k(x)=1$ for every $|x|\in[1/k,k]$. Then
    for any $n,k\in\N$ there holds
    \begin{align*}
        \int_{\{1/k\leq|x|\leq k\}} |x|^\alpha f(dx,t) \leq& \int_{\sd}
        \varphi_k(x) |x|^\alpha
        \left[
            f(dx,t)-f_{\epsilon_n}(dx,t)
        \right]
        +\int_{\sd} \varphi_k(x)|x|^\alpha f_{\epsilon_n}(dx,t)
        \\
        \leq& \int_{\sd}\varphi_k(x)|x|^\alpha\omega(x)^{-1}\omega(x)
        \left[
            f(dx,t)-f_{\epsilon_n}(dx,t)
        \right]
        +\int_{\sd} |x|^\alpha f_0(dx),
    \end{align*}
    where in the last step we used $\varphi_k\leq1$ and Proposition
    \ref{prop:trunc_stuff_conservation}. Note that $
    x\mapsto\varphi_k(x)|x|^\alpha\omega(x)^{-1}\in C_c(\sd)$. Then by taking
    the limit $n\to\infty$ Proposition \ref{prop:subsequence_of_family} yields
    \[
        \int_{\sd} \mathbb{1}_{\{1/k\leq|x|\leq k\}}|x|^\alpha f(dx,t) \leq \int_{\sd}
        |x|^\alpha f_0(dx).
    \]
    Hence, \eqref{eq:candidate_bounded1} follows from Monotone Convergence Theorem
    when letting $k\to\infty$.
    Finally, \eqref{eq:candidate_bounded1}, \eqref{eq:ini_cond} and
    \eqref{eq:weight} imply
    \eqref{eq:candidate_bounded2}.
\end{proof}

In the following Propositions \ref{prop:converges_derivative}, \ref{prop:converges_coag_op}
and \ref{prop:converges_coag_op_int} we prove that the terms  
in the regularized coagulation equation
\eqref{coagEq_regu_measure2} converges to corresponding terms in the
regularized coagulation equation \eqref{eq:thesis_weak}.

\begin{proposition}
    \label{prop:converges_derivative}
    Suppose that Assumption \ref{ass:candidate} holds and let $t\ge0$. Then
    \begin{align}
        \label{eq:convergence_derivative1}
        \lim_{n \to \infty} \int_{\sd}\varphi(x)f_{\epsilon_n}(dx,t)
        =\int_{\sd}\varphi(x)f(dx,t)
    \end{align}
    for all $\varphi\in C_c(\sd)$, and
    \begin{align}
        \label{eq:convergence_derivative2}
        \lim_{n \to \infty}
        \int_0^t\int_{\sd}f_{\epsilon_n}(dx,s)\partial_s\varphi(x,s)ds
        =\int_0^t\int_{\sd}f(dx,s)\partial_s\varphi(x,s)ds
    \end{align}
    for all $\varphi\in C^1([0,\infty);C_c(\sd))$.
\end{proposition}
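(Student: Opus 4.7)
The plan is to deduce both convergences from the uniform-in-time convergence \eqref{eq:family_converges} of Proposition \ref{prop:subsequence_of_family}, using the identity $f = \omega^{-1}\bar{f}$ built into Assumption \ref{ass:candidate}.

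For \eqref{eq:convergence_derivative1}, I would fix $t\ge 0$ and $\varphi \in C_c(\sd)$ and set $\phi := \varphi/\omega$. Since $\omega$ is continuous and strictly positive on $\sd$ and $\varphi$ has compact support, the quotient $\phi$ belongs to $C_c(\sd) \subset C_0(\sd)$, so it is an admissible test function for \eqref{eq:family_converges}. Applying that convergence at time $t$ yields
\begin{align*}
    \int_{\sd}\varphi(x)f_{\epsilon_n}(dx,t)
    = \int_{\sd}\phi(x)\omega(x)f_{\epsilon_n}(dx,t)
    \longrightarrow \int_{\sd}\phi(x)\bar{f}(dx,t)
    = \int_{\sd}\varphi(x)f(dx,t),
\end{align*}
where the last equality uses $f = \omega^{-1}\bar{f}$.

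For \eqref{eq:convergence_derivative2}, I would apply the Dominated Convergence Theorem in the time variable. Fix $\varphi \in C^1([0,\infty);C_c(\sd))$ and $t\ge 0$. By the convention on the space $C^1([0,\infty);C_c(\sd))$, the supports $\mathrm{supp}(\partial_s\varphi(\cdot,s))$ are contained in a common compact set $K \subset \sd$ for all $s \in [0,t]$, and $\partial_s\varphi$ is uniformly bounded on $\sd \times [0,t]$. For each fixed $s$, the function $\partial_s\varphi(\cdot,s)$ is an admissible test function for \eqref{eq:convergence_derivative1}, which gives pointwise convergence
\begin{align*}
    \int_{\sd}\partial_s\varphi(x,s)f_{\epsilon_n}(dx,s) \longrightarrow \int_{\sd}\partial_s\varphi(x,s)f(dx,s)
    \quad \text{for every } s \in [0,t].
\end{align*}

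The key uniform bound needed to close the DCT argument comes from the compactness of $K$ combined with Corollary \ref{cor:bound_coag_op}: setting $c_K := \inf_{x\in K}\omega(x) > 0$, I obtain
\begin{align*}
    \left| \int_{\sd}\partial_s\varphi(x,s)f_{\epsilon_n}(dx,s) \right|
    \leq \|\partial_s\varphi\|_\infty\, c_K^{-1} \int_{\sd}\omega(x)f_{\epsilon_n}(dx,s)
    \leq \|\partial_s\varphi\|_\infty\, c_K^{-1}\, C,
\end{align*}
uniformly in $n$ and $s \in [0,t]$. Since this constant bound is integrable on $[0,t]$, DCT gives \eqref{eq:convergence_derivative2}. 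The only mildly subtle point is producing the uniform-in-$s$ compact support $K$, which is standard for the Fréchet-style space $C^1([0,\infty);C_c(\sd))$; everything else reduces to applying the first claim pointwise in $s$ and invoking the moment bound already established for the regularized solutions.
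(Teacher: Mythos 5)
Your proposal is correct and follows essentially the same route as the paper: the first limit by testing \eqref{eq:family_converges} with $\varphi/\omega\in C_c(\sd)$ and using $f=\omega^{-1}\bar f$, and the second by applying the first limit pointwise in $s$ and closing with the Dominated Convergence Theorem via the uniform $\omega$-moment bound. The only (immaterial) difference is that you dominate by $\|\partial_s\varphi\|_\infty\,(\inf_K\omega)^{-1}\,C$ while the paper uses $\sup_{s\in[0,t]}\|\partial_s\varphi(\cdot,s)/\omega\|_\infty\,C$, and your implicit use of a common compact support on $[0,t]$ is the same standing convention the paper itself invokes.
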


\begin{proof}
    For any $\varphi\in C_c(\sd)$ there holds $\frac{\varphi}{\omega}\in
    C_c(\sd)$, since $\omega$ is continuous and strictly positive. Then
    Proposition \ref{prop:subsequence_of_family} implies the first limit
    \eqref{eq:convergence_derivative1}.
    %\begin{align*}
        %\lim_{n \to \infty}
        %\left|
             %\int_{\sd}\varphi(x)f_{\epsilon_n}(dx,t)
            %-\int_{\sd}\varphi(x)f(dx,t)
        %\right|
        %=&
        %\lim_{n \to \infty}
        %\left|
             %\int_{\sd}\varphi(x)
             %\omega(x)^{-1}
             %\left[
                %\omega(x)f_{\epsilon_n}(dx,t)
                %-\omega(x)f(dx,t)
             %\right]
        %\right|
        %\\
        %=&0.
    %\end{align*}

    Take $\varphi\in C^1([0,\infty);C_c(\sd))$. Then compactness of $[0,t]$
    and continuity of $s\mapsto\partial_s\varphi(\cdot,s)/\omega(\cdot)\in
    C([0,\infty);C_c(\sd))$ and continuity of the norm $\|\cdot\|_\infty$ imply
    $M:=\sup_{s\in[0,t]}\|\partial_s\varphi(\cdot,s)/\omega(\cdot)\|_\infty<\infty$.
    Thereby, for every $s\in[0,t]$ there holds
    \begin{align*}
        \left|
            \int_{\sd} f_{\epsilon_n}(dx,s)\partial_s\varphi(x,s)
        \right|
        \leq M\int_{\sd}\omega(x)f_{\epsilon_n}(dx,s)
        \leq MC,
    \end{align*}
    where in the last step Proposition \ref{prop:candidate_bounds} was used. Hence, the
    second limit \eqref{eq:convergence_derivative2} follows from the first limit
    \eqref{eq:convergence_derivative1} with the Dominated Convergence Theorem.
\end{proof}

\begin{proposition}
    \label{prop:converges_coag_op}
    Suppose that Assumption \ref{ass:candidate} holds. Then for all $\varphi\in
   C_c(\sd)$ and for all $t\ge0$ there holds
    \begin{align}
        \nonumber
        \lim_{n \to \infty}  & 
        \int_{\sd}\int_{\sd} K(x,y)
        \left[ 
            \zeta_{\epsilon_n}(x+y)\varphi(x+y)-\varphi(x)-\varphi(y)
        \right] f_{\epsilon_n}(dx,t)
        f_{\epsilon_n}(dy,t)
        \\
        \label{eq:coag_op_lim}
        =&
        \int_{\sd}\int_{\sd} K(x,y)
        \left[ 
           \varphi(x+y)-\varphi(x)-\varphi(y)
        \right]f(dx,t)
        f(dy,t).
    \end{align}
\end{proposition}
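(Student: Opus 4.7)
The plan is to first reduce to the unregularized kernel, then recast both sides of \eqref{eq:coag_op_lim} in terms of the weighted measures $\omega f_{\epsilon_n}$ and $\omega f$, and finally pass to the limit by combining the weak-$*$ convergence from Proposition \ref{prop:subsequence_of_family} on a bounded truncation with a uniform tail estimate that exploits the strict inequality $\gamma_j+\lambda_j<1$.

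I would begin by fixing $M>0$ with $\supp\varphi\subset\{|x|\leq M\}$ and observing that for all $n$ with $1/\epsilon_n>M$ one has $\zeta_{\epsilon_n}(x+y)\varphi(x+y)=\varphi(x+y)$ pointwise, since both vanish for $|x+y|>M$ and $\zeta_{\epsilon_n}\equiv1$ otherwise. Writing $G(x,y):=K(x,y)[\varphi(x+y)-\varphi(x)-\varphi(y)]$, the task then reduces to showing that $\int_{\sd}\int_{\sd}G\,f_{\epsilon_n}(dx)f_{\epsilon_n}(dy)\to\int_{\sd}\int_{\sd}G\,f(dx)f(dy)$.

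Next I would establish a tail estimate uniform in $n$. For $R>M$ and $|x|>R$ one has $\varphi(x)=0$ and $\varphi(x+y)=0$ (using $|x+y|\geq|x|$), so $G(x,y)=-K(x,y)\varphi(y)$ forces $y\in\supp\varphi$. Combining $K(x,y)\leq 2\omega(x)\omega(y)$ from Proposition \ref{prop:weak_sol_well_defined}, the uniform bound $\int\omega f_{\epsilon_n}\leq C$ from Corollary \ref{cor:bound_coag_op}, and the inequality $\omega(x)\leq|x|^{\max(\gamma+\lambda)}$ for $|x|\geq 1$, peeling off one factor of $|x|$ yields
\[
\int_{\{|x|>R\}\cup\{|y|>R\}}|G|\,f_{\epsilon_n}(dx)f_{\epsilon_n}(dy)\leq c\,R^{\max(\gamma+\lambda)-1},
\]
with $c$ depending on $\|\varphi\|_\infty$ and $\int|x|f_0(dx)$ but not on $n$; the mass bound for $f_{\epsilon_n}$ is supplied by Proposition \ref{prop:trunc_stuff_conservation}. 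The strict inequality $\max(\gamma+\lambda)<1$ makes the right-hand side vanish as $R\to\infty$, and the same estimate with $f$ replacing $f_{\epsilon_n}$ follows from Proposition \ref{prop:candidate_bounds}.

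For the bounded piece I would choose a cutoff $\eta_R\in C_c(\sd)$ with $0\leq\eta_R\leq 1$, $\eta_R=1$ on $\{|x|\leq R\}$ and $\supp\eta_R\subset\{|x|\leq 2R\}$, and set
\[
\psi_R(x,y):=\eta_R(x)\eta_R(y)\frac{G(x,y)}{\omega(x)\omega(y)}.
\]
The ratio $G(x,y)/(\omega(x)\omega(y))$ is bounded since $K\leq 2\omega(x)\omega(y)$, and it vanishes uniformly as $|x|\to 0$ or $|y|\to 0$ by the modulus of continuity of $\varphi$ (for $|x|$ small we have $\varphi(x)=0$ and $|\varphi(x+y)-\varphi(y)|\to 0$ uniformly in $y$), placing $\psi_R\in C_0(\sd\times\sd)$. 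The one-variable weak-$*$ convergence $\omega f_{\epsilon_n}\to\omega f$ together with the uniform total-variation bound then upgrades to product weak-$*$ convergence $(\omega f_{\epsilon_n})\otimes(\omega f_{\epsilon_n})\to(\omega f)\otimes(\omega f)$ against $C_0(\sd\times\sd)$ test functions, via the standard density of elementary tensors. Applying this to $\psi_R$, sending $n\to\infty$ and then $R\to\infty$ while controlling the difference $G-\psi_R\,\omega(x)\omega(y)$ by the tail estimate above closes the argument. I expect the main subtlety to be the product weak-$*$ convergence: the natural integrand $G(x,y)/(\omega(x)\omega(y))$ does not itself lie in $C_0(\sd\times\sd)$ since it does not decay as $|y|\to\infty$ when $x\in\supp\varphi$, so the cutoff $\eta_R$ and the uniform tail bound must be interleaved carefully.
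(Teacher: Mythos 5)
Your argument is correct, but it takes a genuinely different route from the paper. The paper splits the bracket into its three terms $\varphi(y)$, $\varphi(x)$, $\varphi(x+y)$ and, for each, inserts compactly supported cutoffs excluding both small and large sizes; the price is that the excluded regions must be estimated separately, and in particular the small-size region near the deleted origin is controlled quantitatively using the improved moment $\int |x|^{\min(-\beta,-\lambda_1)-r}f_{\epsilon_n}(dx)$ coming from \eqref{eq:ini_cond} (giving a factor $m^{-r}$), while the large-size region uses a spare exponent $r'\in(0,1-\max(\gamma+\lambda)]$. You instead keep the combination $\varphi(x+y)-\varphi(x)-\varphi(y)$ intact, divide by $\omega(x)\omega(y)$, and observe that the cancellation together with $K\leq 2c_2\,\omega\otimes\omega$ makes the ratio extend continuously by zero at the deleted origin, so no small-size cutoff or $-r$ moment is needed in this step at all; only a large-size cutoff remains, which you control uniformly in $n$ by peeling off one power of $|x|$ and using $\int|x|f_{\epsilon_n}\leq\int|x|f_0$, $\int\omega f_{\epsilon_n}\leq C$ and $\max(\gamma+\lambda)<1$ (and the analogous bounds for $f$ from Proposition \ref{prop:candidate_bounds}). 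The limit passage then tests the weighted product measures $(\omega f_{\epsilon_n})\otimes(\omega f_{\epsilon_n})$ against a single $C_0(\sd\times\sd)$ function, upgraded from one-variable weak-$*$ convergence via tensor density and the uniform total-variation bound — the paper uses the same kind of product convergence, but only for $C_c(\sd\times\sd)$ integrands. What your approach buys is a cleaner treatment of the singular region near the origin (no use of the extra $r$ in \eqref{eq:ini_cond} here); what the paper's term-by-term approach buys is that it never needs the integrand to lie in $C_0$ of the product space, only in $C_c$, so the functional-analytic input is slightly lighter. Two cosmetic points: your $\eta_R$ is not in $C_c(\sd)$ as claimed, since $\{|x|\leq R\}\cap\sd$ is not relatively compact in $\sd$ (its closure meets the deleted origin); it suffices to take $\eta_R$ bounded and continuous with $\eta_R=0$ for $|x|\geq 2R$, which is all you use, because membership of $\psi_R$ in $C_0(\sd\times\sd)$ is anyway supplied by your vanishing argument near the origin. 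Also the constant in your tail bound depends on the uniform bound $C$ of Corollary \ref{cor:bound_coag_op} (for the $y$-integral over $\supp\varphi$) in addition to $\|\varphi\|_\infty$ and $\int|x|f_0(dx)$, as your citation of that corollary already implicitly acknowledges.
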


\begin{proof}
    Fix  $\varphi\in C_c(\sd)$ and $t\ge0$.  We start by proving that 
    \begin{align}
        \label{eq:phiy_nlim}
        \lim_{n \to \infty}  & 
        \int_{\sd}\int_{\sd} K(x,y)f_{\epsilon_n}(dx,t)
        f_{\epsilon_n}(dy,t)
            \varphi(y)
        =
        \int_{\sd}\int_{\sd} K(x,y)f(dx,t)
        f(dy,t)
           \varphi(y).
    \end{align}
    Since $\varphi$ has a compact support there exist $b>a>0$, such that
    $\supp\varphi\subset \{a\leq|x|\leq b\}$. For each $m\in\N$ take $\phi_m\in
    C_c(\sd)$, such that $\mathbb{1}_{\{1/m\leq|x|\leq m\}}\leq
    \phi_m\leq\mathbb{1}_{\sd}$.
    Note that $(x,y)\mapsto
    K(x,y)\phi_m(x)\varphi(y)\in C_c(\sd\times\sd)$. Hence, for all $m\in\N$
    the limit \eqref{eq:convergence_derivative1} implies
    \begin{align}
        \label{eq:phiym_nlim}
        \lim_{n \to \infty}  & 
        \int_{\sd}\int_{\sd} K(x,y)f_{\epsilon_n}(dx,t)
        f_{\epsilon_n}(dy,t)
            \phi_m(x)\varphi(y)
        =\int_{\sd}\int_{\sd}
        K(x,y)f(dx,t)
        f(dy,t)
            \phi_m(x)\varphi(y).
    \end{align}
    Take $m\in\N$ large enough, so that $1/m<a$ and $m>b$. Then by the symmetry
    \eqref{eq:condK_cont} and upper bound \eqref{eq:condK_sym2} of $K$, there
    is a constant $C\in(0,\infty)$ independent of $m$, such that there holds
    $K(x,y)\leq C |x|^{\max(\gamma+\lambda) }$ for
    $(x,y)\in\{m<|x|\}\times\{a\leq|y|\leq b\}$ and $K(x,y)\leq
    C |x|^{\min(-\beta,-\lambda_1 )}$ for $(x,y)\in\{|x|<1/m\}\times\{a\leq|y|\leq b\}$. Hence,
    $\mathbb{1}_{\{1/m\leq|x|\leq m\}}\leq \phi_m\leq\mathbb{1}_{\sd}$ implies
    \begin{align}
        \nonumber
        &\left| 
            \int_{\sd}\int_{\sd}K(x,y)
            \varphi(y)
            f_{\epsilon_n}(dx,t)
            f_{\epsilon_n}(dy,t)
            -\int_{\sd}\int_{\sd}K(x,y)
            \phi_m(x)\varphi(y)
            f_{\epsilon_n}(dx,t)
            f_{\epsilon_n}(dy,t)
        \right|  
        \\
        \nonumber
        &\leq
        \left| 
            \int_{\sd}\int_{\{|x|<1/m\}} K(x,y)
            [1-\phi_m(x)]\varphi(y)
            f_{\epsilon_n}(dx,t)
            f_{\epsilon_n}(dy,t)
        \right|
        \\
        \nonumber
        &\quad+
        \left| 
            \int_{\sd}\int_{\{m<|x|\}} K(x,y)
            [1-\phi_m(x)]\varphi(y)
            f_{\epsilon_n}(dx,t)
            f_{\epsilon_n}(dy,t)
        \right|
        \\
        \nonumber
        &\leq
            \int_{\{a\leq|y|\leq b\}}\int_{\{|x|<1/m\}} K(x,y)
            |\varphi(y)|
            f_{\epsilon_n}(dx,t)
            f_{\epsilon_n}(dy,t)
            \\
            \nonumber
        &\quad+
            \int_{\{a\leq|y|\leq b\}}\int_{\{m<|x|\}} K(x,y)
            |\varphi(y)|
            f_{\epsilon_n}(dx,t)
            f_{\epsilon_n}(dy,t)
        \\
        \nonumber
        &\leq 
        C\|\varphi\|_\infty \int_{\{|x|<1/m\}}|x|^{\min(-\beta,-\lambda_1) }f_{\epsilon_n}(dx,t)
        +
        C\|\varphi\|_\infty \int_{\{m<|x|\}}|x|^{\max(\gamma+\lambda) }f_{\epsilon_n}(dx,t)
        \\
        \nonumber
        &\leq
        C\|\varphi\|_\infty
        \left( 
            m^{-r}\int_{\{|x|<1/m\}} |x|^{\min(-\beta,-\lambda_1) -r}f_{\epsilon_n}(dx,t)
            + m^{-r'}\int_{\{m<|x|\}}|x|^{\max(\gamma+\lambda) +r'}f_{\epsilon_n}(dx,t)
        \right) 
        \\
        &\leq
        \nonumber
        C\|\varphi\|_\infty(m^{-r}+m^{-r'})
        \\
        &\to 0 \text{ uniformly in $n$ as $m\to\infty$, }
        \label{eq:phiy_uniform_mlim}
    \end{align}
    where we took $r'\in(0,1-\max(\gamma+\lambda) ]$ and used Proposition
    \ref{prop:trunc_stuff_conservation}. Note that the $r>0$ in the above argument is the same as in the  condition \eqref{eq:ini_cond} for the initial data. Then \eqref{eq:phiym_nlim} and 
    \eqref{eq:phiy_uniform_mlim} imply the following limits
    \begin{align}
        \nonumber
        &\lim_{n \to \infty}   
        \int_{\sd}\int_{\sd} K(x,y)
            \varphi(y)
        f_{\epsilon_n}(dx,t)
        f_{\epsilon_n}(dy,t)
        \\
        \nonumber
        &=
        \lim_{n \to \infty}   
        \lim_{m \to \infty}   
        \int_{\sd}\int_{\sd}
         K(x,y)
            \phi_m(x)\varphi(y)
         f_{\epsilon_n}(dx,t)
        f_{\epsilon_n}(dy,t)
        \\
        \nonumber
        &=
        \lim_{m \to \infty}   
        \lim_{n \to \infty}   
        \int_{\sd}\int_{\sd}
         K(x,y)
            \phi_m(y)\varphi(y)
         f_{\epsilon_n}(dx,t)
        f_{\epsilon_n}(dy,t)
        \\
        \nonumber
        &=
        \lim_{m \to \infty}   
         \int_{\sd}\int_{\sd}
        K(x,y)
           \phi_m(x)\varphi(y)
        f(dx,t)
        f(dy,t)
        \\
        \label{eq:phiy_limit_change}
        &=
        \int_{\sd}\int_{\sd} 
        K(x,y)\varphi(y)
        f(dx,t)
        f(dy,t)
           ,
    \end{align}
    where in the last step we used Monotone Convergence Theorem together with Propositions
    \ref{prop:weak_sol_well_defined} and \ref{prop:candidate_bounds}. This
    proves \eqref{eq:phiy_nlim}. Due to the symmetry $x\leftrightarrow y$ in
    \eqref{eq:coag_op_lim}, this proves also the convergence of the $\varphi(x)$ term in
    \eqref{eq:coag_op_lim}.

    For large enough $n$, we have
    $\zeta_{\epsilon_n}(x+y)=1$ when $x+y\in\supp\varphi\subset\{a\leq|x|\leq b\}$. Thereby,
    it remains to be proven that
    \begin{align}
        \label{eq:phixy_nlim}
        \lim_{n \to \infty}  
        \int_{\sd}\int_{\sd} K(x,y)
            \varphi(x+y)
        f_{\epsilon_n}(dx,t)
        f_{\epsilon_n}(dy,t)
        &=
        \int_{\sd}\int_{\sd}
        K(x,y)
           \varphi(x+y)
        f(dx,t)
        f(dy,t).
    \end{align}
    For $m\in\N$, let $A_m=\{a\leq |x+y|\leq b\}\cap\{1/m\leq |x|\}^2\subset
    \sd\times\sd$. Note that $A_m$ is compact for each $m\in\N$. For each $m\in\N$
    take $\psi_m\in C_c(\sd\times\sd)$, such that
    $\mathbb{1}_{A_m}\leq\psi\leq\mathbb{1}_{\sd\times\sd}$. 
     Hence, for all
    $m\in\N$ the limit \eqref{eq:convergence_derivative1}  implies
    \begin{align} 
        \label{eq:phixym_nlim}
        &\lim_{n \to \infty}
        \int_{\sd}\int_{\sd} 
        K(x,y)
            \psi_m(x,y)\varphi(x+y)
            f_{\epsilon_n}(dx,t)
        f_{\epsilon_n}(dy,t)
        \\
        &=
        \int_{\sd}\int_{\sd} 
        K(x,y)
           \psi_m(x,y)\varphi(x+y)
           f(dx,t)
        f(dy,t).
    \end{align}
    By the upper bound \eqref{eq:condK_sym2} of $K$ and $-\lambda \leq\gamma+\lambda $, there is a constant
    $C\in(0,\infty)$ independent of $m$, such that there holds $K(x,y)\leq C
    x^{\min(-\beta,-\lambda_1) }y^{\min(-\beta,-\lambda_1) }$ for $(x,y)\in\{a\leq |x+y|\leq b\}$ . Hence,
    $\mathbb{1}_{A_m}\leq\psi\leq\mathbb{1}_{\sd\times\sd}$  and $\supp\varphi\subset
    \{a\leq|x| \leq b\}$ imply
    \begin{align}
        \nonumber
        &\Bigg| 
            \int_{\sd}\int_{\sd}  K(x,y)
            \varphi(x+y)
            f_{\epsilon_n}(dx,t)
            f_{\epsilon_n}(dy,t)
            \\
            \nonumber
            &- \int_{\sd}\int_{\sd} K(x,y)
            \psi_m(x,y)\varphi(x+y)
            f_{\epsilon_n}(dx,t)
            f_{\epsilon_n}(dy,t)
        \Bigg|  
        \\
        \nonumber
        &=
        \left| 
            \iint_{\{a\leq |x+y|\leq b\}\setminus\{1/m\leq|x|\}^2} K(x,y)
            (1-\psi_m(x,y))\varphi(x+y)
            f_{\epsilon_n}(dx,t)
            f_{\epsilon_n}(dy,t)
        \right|
        \\
        \nonumber
        &\leq
            \iint_{\{a\leq |x+y|\leq b\}\setminus\{1/m\leq|x|\}^2} K(x,y)
            |\varphi(x+y)|
            f_{\epsilon_n}(dx,t)
            f_{\epsilon_n}(dy,t)
        \\
        \nonumber
        &\leq
        C\|\varphi\|_\infty
        \iint_{\{a\leq |x+y|\leq b\}\setminus\{1/m\leq|x|\}^2} |x|^{\min(-\beta,-\lambda_1) }|y|^{\min(-\beta,-\lambda_1) }f_{\epsilon_n}(dx,t)
        f_{\epsilon_n}(dy,t)
        \\
        \nonumber
        &\leq 
        2C\|\varphi\|_\infty
        \int_{\{|y|<1/m\}}|y|^{\min(-\beta,-\lambda_1) }f_{\epsilon_n}(dy,t)\int_{\{a/2\leq|x|\leq b\}}|x|^{\min(-\beta,-\lambda_1 )}f_{\epsilon_n}(dx,t)
        \\
        \nonumber
        &\leq
        C\|\varphi\|_\infty
        m^{-r}\int_{\{|x|<1/m\}} |x|^{\min(-\beta,-\lambda_1 )-r}f_{\epsilon_n}(dx,t)\int_{\{a/2\leq|x|\leq b\}}|x|^{\min(-\beta,-\lambda_1) }f_{\epsilon_n}(dx,t)
        \\
        &\leq
        \nonumber
        C\|\varphi\|_\infty m^{-r}
        \\
        &\to 0 \text{ uniformly in $n$ as $m\to\infty$, }
        \label{eq:phixy_uniform_mlim}
    \end{align}
    where we used Proposition \ref{prop:trunc_stuff_conservation}. Then equivalent
    reasoning as what was done in \eqref{eq:phiy_limit_change} implies 
    \eqref{eq:phixy_nlim}.
\end{proof}

\begin{proposition}
    \label{prop:converges_coag_op_int}
    Suppose that Assumption \ref{ass:candidate} holds. Then for all $\varphi\in
    C^1([0,\infty);C_c(\sd))$ and for all $t\ge0$, it holds
    \begin{align*}
        \lim_{n \to \infty}  & 
        \int_0^t\int_{\sd}\int_{\sd} K_{\epsilon_n}(x,y)
        \left[ 
            \zeta_{\epsilon_n}(x+y)\varphi(x+y,s)-\varphi(x,s)-\varphi(y,s)
        \right] 
        f_{\epsilon_n}(dx,s)
        f_{\epsilon_n}(dy,s)
        ds
        \\
        =&
        \int_0^t\int_{\sd}\int_{\sd} K(x,y)
        \left[ 
           \varphi(x+y,s)-\varphi(x,s)-\varphi(y,s)
        \right]
        f(dx,s)
        f(dy,s)
        ds.
    \end{align*}
\end{proposition}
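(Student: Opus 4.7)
The plan is to pass to the limit under the time integral by invoking the Dominated Convergence Theorem on $[0,t]$, combining the pointwise-in-$s$ convergence delivered by Proposition \ref{prop:converges_coag_op} with a uniform-in-$n$, uniform-in-$s$ majorant coming from Corollary \ref{cor:bound_coag_op}.

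First I would fix $\varphi \in C^1([0,\infty);C_c(\sd))$ and $t \geq 0$, and for each $s \in [0,t]$ view $\varphi(\cdot,s)$ as an element of $C_c(\sd)$. Applying Proposition \ref{prop:converges_coag_op} to this test function shows that, for every individual $s \in [0,t]$, the inner double integral
\[
I_n(s) := \int_{\sd}\int_{\sd} K(x,y)\bigl[\zeta_{\epsilon_n}(x+y)\varphi(x+y,s)-\varphi(x,s)-\varphi(y,s)\bigr] f_{\epsilon_n}(dx,s) f_{\epsilon_n}(dy,s)
\]
converges as $n \to \infty$ to the analogous quantity
\[
I(s) := \int_{\sd}\int_{\sd} K(x,y)\bigl[\varphi(x+y,s)-\varphi(x,s)-\varphi(y,s)\bigr] f(dx,s) f(dy,s).
\]
This immediately gives the desired pointwise convergence of the integrands in $s$.

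Next I would produce a uniform integrable majorant. Since $[0,t]$ is compact and $s \mapsto \varphi(\cdot,s)$ is continuous into the normed space $(C_c(\sd),\|\cdot\|_\infty)$ (as already exploited in the proof of Proposition \ref{prop:converges_derivative}), the quantity $M := \sup_{s \in [0,t]} \|\varphi(\cdot,s)\|_\infty$ is finite. Using $0 \leq \zeta_{\epsilon_n} \leq 1$, the triangle inequality, and Corollary \ref{cor:bound_coag_op}, we obtain
\[
|I_n(s)| \leq 3M \int_{\sd}\int_{\sd} K(x,y) f_{\epsilon_n}(dx,s) f_{\epsilon_n}(dy,s) \leq 3MC,
\]
uniformly in $n \in \N$ and $s \in [0,t]$. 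The same estimate applied via Proposition \ref{prop:weak_sol_well_defined} together with Proposition \ref{prop:candidate_bounds} shows $|I(s)| \leq 3MC$ as well, so $I$ is bounded and, being the pointwise limit of continuous functions $I_n$, is measurable. The Dominated Convergence Theorem with the constant majorant $3MC$ on $[0,t]$ then yields $\int_0^t I_n(s)\,ds \to \int_0^t I(s)\,ds$, which is exactly the claim.

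The only mildly delicate point is the finiteness of $M$, i.e.\ extracting a joint bound in $(x,s)$ from the conditions defining $C^1([0,\infty);C_c(\sd))$; this is precisely the convention already used implicitly in Proposition \ref{prop:converges_derivative}, namely treating $\varphi$ as a continuous $C_c(\sd)$-valued map so that its image over the compact interval $[0,t]$ is bounded in the sup norm. Apart from this, the proof is an essentially routine application of DCT, with all nontrivial work already contained in Propositions \ref{prop:converges_coag_op}, \ref{prop:weak_sol_well_defined} and \ref{prop:candidate_bounds}, and in Corollary \ref{cor:bound_coag_op}.
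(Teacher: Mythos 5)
Your proof is correct and matches the paper's argument, which likewise deduces the claim from the Dominated Convergence Theorem combined with the uniform bound of Corollary \ref{cor:bound_coag_op} and the pointwise-in-$s$ convergence of Proposition \ref{prop:converges_coag_op}. The extra care you take about measurability of the limit integrand and the finiteness of $\sup_{s\in[0,t]}\|\varphi(\cdot,s)\|_\infty$ simply makes explicit conventions the paper already uses (e.g.\ in Proposition \ref{prop:converges_derivative}).
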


\begin{proof}
    This follows from the Dominated Convergence Theorem together with Propositions
    \ref{cor:bound_coag_op} and \ref{prop:converges_coag_op}.
\end{proof}

\subsection{Proof of the existence result}
\label{sec:existence_proof}

\begin{proofof}[Proof of Theorem \ref{thm:existence}]
    We are now ready to put together the four Steps presented in the subsection \ref{sec:plan} and finish the proof.
    \begin{enumerate}[wide=0pt, leftmargin=\parindent, label=Step \arabic*]
        \item 
            For each $\epsilon\in(0,1)$ we are going to regularize the
            existence problem as follows. We define a new initial data
            $f_{\epsilon,0}\in\M_{+,b}(\R_*)$ by restricting the original
            initial data $f_0\in\M_+(\R_*)$ into the compact set $\trunc$, namely,
            $f_{\epsilon,0}(\cdot):=f_0(\cdot\cap\trunc)$. To
            avoid coagulation producing objects larger than $2/\epsilon$, we
            multiply the gain term by a continuous function
            $\zeta_\epsilon:\sd\to[0,1]$, such that $\zeta_\epsilon(x)=1$ for
            $|x| \leq 1/\epsilon$, and $\zeta_\epsilon(x)=0$ for
            $|x|\ge2/\epsilon$. We are now looking for regularized solutions with the 
            original kernel $K$ and regularized initial data $f_{\epsilon,0}$ in the 
            sense of Definition \ref{def:requ_solu}.
        \item 
            In section \ref{sec:regulized} we proved Theorem
            \ref{thm:regularized_existence_uniqueness} from which we obtain for
            each $\epsilon\in(0,1)$ a unique regularized solution $f_\epsilon$ with
            initial data $f_{\epsilon,0}$ in the sense of Definition
            \ref{def:requ_solu}. 
        \item 
            Using the weight $\omega(x)=x^{\min(-\beta,-\lambda_1)}$ for $|x|\leq1$ and
            $\omega(x)=x^{\max(\gamma+\lambda)}$ for $|x|>1$ we obtained in the Proposition 
            \ref{prop:subsequence_of_family} that the closure of the weighted
            family $F=\{\omega\fe\}_{\epsilon\in(0,1)}$ is sequentially compact 
            in the topology of uniform convergence on compact sets. For us this means 
            that there is a subsequence $(\omega\fen)_{n=1}^\infty\subset F$ and 
            a measure-valued function $\bar{f}\in\bar{F}\cap
            C([0,\infty);\M_{+,b}(\R_*))$, such that $\omega\fen\to\bar{f}$ in
            the sense of \eqref{eq:family_converges}.
        \item 
            At this point we have a suitable candidate solution
            $f:=\frac{1}{\omega}\bar{f}$ to the coagulation equation \eqref{B4}
            in the sense of Definition \ref{def:time-dep-sol-thesis}.  The
            candidate $f$ satisfies the condition \eqref{eq:continuity} of
            Definition \ref{def:time-dep-sol-thesis} since $\omega
            f=\bar{f}\in C([0,\infty);\M_b(\R_*))$. We proved the Propositions
            \ref{prop:candidate_initial} and \ref{prop:candidate_bounds} to
            make sure that $f$ agrees with the initial condition $f_0$ and
            satisfies the conditions \eqref{eq:bound_gamma_moment} and
            \eqref{eq:conservation}. By Remark \ref{re:regu_sol} each $\fen$
            satisfies the integrated version \eqref{coagEq_regu_measure1} of
            the regularized coagulation equation \eqref{coagEq_regu_measure2}.
            In Propositions \ref{prop:converges_derivative} and
            \ref{prop:converges_coag_op_int} we proved that each term in
            \eqref{coagEq_regu_measure1} converges to the corresponding term in
            the desired coagulation equation \eqref{eq:thesis_weak} for $f$.
            Thereby, $f$ satisfies the coagulation equation
            \eqref{eq:thesis_weak}.  Thus, our candidate $f$ is a weak solution
            to \eqref{B4} with initial data $f_0$.
    \end{enumerate}
    
\end{proofof}

\subsection{Existence of strong discrete solutions}
\label{sec:discrete}

The following Proposition \ref{prop:solution_stays_discrete} clarifies the 
fact that if the initial data is discrete then the solution stays discrete. 

\begin{proposition}
    \label{prop:solution_stays_discrete}
    Suppose $K$ is as in Definition \eqref{def:time-dep-sol-thesis} and 
    $f:[0,\infty)\to\M_+(\sd)$ is a solution to the coagulation equation \eqref{B4} in the sense 
    of Definition \eqref{def:time-dep-sol-thesis}. If the initial data $f_0=f(\cdot,0)$ satisfies $f_0 = \sum_{\alpha>0} n_0(\alpha)\delta_\alpha$
    for some $n_0:\dsd\to[0,\infty)$, then there exists $n:\dsd\times[0,\infty)\to[0,\infty)$ such that $f(\cdot,t)=\sum_{\alpha>0}n(\alpha,t)\delta_\alpha(\cdot)$ for all $t\ge 0$. Moreover, 
    $t\mapsto n(\alpha,t)$ is continuously differentiable for all $\alpha\in\dsd$, and $n$ satisfies the discrete coagulation equation 
    \eqref{eq:B4_discrete} for all $t\ge0$ and $\alpha\in\dsd$. Also 
    \begin{align}
    \label{eq:some_discrete_things}
       \sum_{\alpha\in\dsd}
       \alpha n(\alpha,t)
       \leq
       \sum_{\alpha\in\dsd}
       \alpha n_0(\alpha),
       \quad 
       \text{and }
       \quad 
       \sup_{s\in[0,t]}
       \sum_{\alpha\in\dsd}
       \omega(\alpha) n(\alpha,s)
       <\infty 
    \end{align}
    for all $t\ge0$. 
\end{proposition}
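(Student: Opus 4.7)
The plan is to split the proof into three steps: first show that $f(\cdot,t)$ stays supported on the lattice $\dsd$; then derive the discrete equation and its time-regularity by testing against isolated bumps; finally read off the moment bounds from Definition~\ref{def:time-dep-sol-thesis}.

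For the support claim, I would test \eqref{eq:thesis_weak} against the distance-to-lattice function
\begin{align*}
\chi(x):=\mathrm{dist}\bigl(x,\mathbb{N}_0^d\bigr)=\inf_{\alpha\in\mathbb{N}_0^d}|x-\alpha|.
\end{align*}
It is $1$-Lipschitz, satisfies $0\le\chi(x)\le|x|$ (taking $\alpha=0$) and $\chi(x)\le d/2$ (rounding each coordinate), and the cutoffs $\chi\phi_n$ with $\phi_n\in C_c(\sd)$, $\phi_n\nearrow 1$, make it admissible in Proposition~\ref{prop:valid_test_functions}. The decisive property is subadditivity: if $\alpha_x,\alpha_y\in\mathbb{N}_0^d$ attain the infima for $x$ and $y$, then $\alpha_x+\alpha_y\in\mathbb{N}_0^d$, so
\begin{align*}
\chi(x+y)\le|x+y-\alpha_x-\alpha_y|\le\chi(x)+\chi(y).
\end{align*}
Since $\chi\equiv 0$ on $\dsd\supseteq\mathrm{supp}\,f_0$, the left-hand side of \eqref{eq:thesis_weak} tested against $\chi$ reduces to $\int\chi\,f(dx,t)$, while subadditivity forces the right-hand side to be $\le 0$. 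Combined with $\chi,f\ge 0$ this gives $\int\chi\,f(dx,t)=0$, and since $\chi>0$ off $\mathbb{N}_0^d$, $f(\cdot,t)$ must be concentrated on the countable set $\dsd$. Thus $f(\cdot,t)=\sum_{\alpha>0}n(\alpha,t)\delta_\alpha$ with $n(\alpha,t):=f(\{\alpha\},t)\ge0$.

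Next, for each $\alpha\in\dsd$ I would plug into \eqref{eq:thesis_weak} a bump $\psi_\alpha\in C_c(\sd)$ with $\psi_\alpha(\alpha)=1$ supported in a ball of radius $<1/2$ around $\alpha$, so it vanishes on $\dsd\setminus\{\alpha\}$. Because $f(\cdot,s)$ is now discrete, the weak equation collapses (using symmetry of $K$) into the integrated discrete equation
\begin{align*}
n(\alpha,t)-n_0(\alpha)=\int_0^t\!\Bigl[\tfrac12\!\!\sum_{0<\beta<\alpha}\!\!K(\alpha-\beta,\beta)n(\alpha-\beta,s)n(\beta,s)-n(\alpha,s)\!\sum_{\gamma>0}\!K(\alpha,\gamma)n(\gamma,s)\Bigr]ds.
\end{align*}
The integrand is finite by the bound $K(\alpha,\gamma)\le 2\omega(\alpha)\omega(\gamma)$ used in Proposition~\ref{prop:weak_sol_well_defined} combined with \eqref{eq:bound_gamma_moment}, and continuous in $s$ because each $n(\beta,\cdot)=\langle\psi_\beta,f(\cdot,\cdot)\rangle$ inherits continuity from \eqref{eq:continuity}. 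Hence $n(\alpha,\cdot)\in C^1$ and satisfies \eqref{eq:B4_discrete} pointwise; the estimates \eqref{eq:some_discrete_things} then follow by integrating $|x|$ and $\omega(x)$ against the discrete measure and invoking \eqref{eq:conservation} and \eqref{eq:bound_gamma_moment}.

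The main obstacle is identifying a subadditive, lattice-vanishing test function that is admissible in Proposition~\ref{prop:valid_test_functions}; once $\chi$ is in hand, the remainder is bookkeeping inside the weak formulation for discrete measures.
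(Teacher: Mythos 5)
Your proposal is correct, and its second half (isolated bumps $\psi_\alpha$, collapse of \eqref{eq:thesis_weak} to the integrated discrete equation, $C^1$ regularity of $n(\alpha,\cdot)$, and reading \eqref{eq:some_discrete_things} off \eqref{eq:conservation} and \eqref{eq:bound_gamma_moment}) is essentially the paper's argument. The support step, however, takes a genuinely different route: the paper first tests with $|x|\ind(|x|<1)$ to exclude mass in $\{|x|<1\}$ and then runs an induction in $n$ with the test functions $\ind(|x|<n)\ind(x\notin\dsd)$, whereas you use the single function $\chi(x)=\mathrm{dist}(x,\mathbb{N}_0^d)$, which is bounded by $d/2$, dominated by $|x|$, vanishes on $\sd$ exactly on $\dsd$, and is subadditive because $\mathbb{N}_0^d$ is closed under addition; one application of Proposition \ref{prop:valid_test_functions} then kills all off-lattice mass for every $t$ at once. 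This buys a shorter, induction-free argument and sidesteps the admissibility discussion for the discontinuous indicator test functions of the paper; the only thing it gives up is the intermediate fact $f(\{|x|<1\},t)=0$, which the paper's route produces explicitly but which is not needed for the statement. One caveat on your derivative step: justifying continuity in $s$ of the collapsed integrand solely by continuity of each $n(\beta,\cdot)$ is too thin for the infinite loss-term sum $\sum_{\gamma>0}K(\alpha,\gamma)n(\gamma,s)$; the clean fix is to argue uniform domination (via $K(\alpha,y)\leq 2\omega(\alpha)\omega(y)$ and \eqref{eq:bound_gamma_moment}) or simply to invoke, as the paper's proof does, the observation made after Proposition \ref{prop:weak_sol_well_defined} that $t\mapsto\int_{\sd}\varphi(x,t)f(dx,t)$ is continuously differentiable for admissible test functions.
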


\begin{proof}
    By Proposition \ref{prop:valid_test_functions} $\phi(x)=|x|\ind(|x| < 1)$ is a valid test function. Then $\phi(x+y)-\phi(x)-\phi(y)\leq 0$ for all $x,y\in\sd$. Since $|\alpha|\ge1$ for all $\alpha\in\dsd\subset\sd$, we have $\int_\sd \phi(x)f_0(dx)=0$.
    Hence, the coagualation equation \eqref{eq:thesis_weak} with $\phi$ as the test function implies
    $ \int_\sd \phi(x)f(dx,t)=0 $ for all $t\ge0$. Thus 
    \begin{align}
        \label{eq:discrete_first_induction_step}
        f(\{|x|<1\},t)=0
    \end{align}
    for all $t\ge0$.
     We want to prove that for each $n\in\N$ there holds
    \begin{align}
        \label{eq:discrete_induction_step}
        \int_\sd \phi_n(x) f(dx,t)=0
    \end{align}
    for all and $t\ge0$, where $\phi_n(x)= \ind(|x|<n)\ind(x\notin\dsd)$.
    Note that by 
   \eqref{eq:discrete_first_induction_step}    we know that \eqref{eq:discrete_induction_step} holds if $n=1$ or for any $n$ if $t=0$ by the Assumption on the initial data.
    We will continue with induction. Let $n\in\N$ and suppose that \eqref{eq:discrete_induction_step} holds with this $n$.
    By Proposition \ref{prop:valid_test_functions}
   and equation \eqref{eq:discrete_first_induction_step}  we know that $\phi_{n+1}$ is a valid test function. Since $\phi_{n+1}(x+y)=0$ for all $x,y\in\sd$ whenever $\phi_n(x)\phi_n(y) =0 $, the coagulation equation \eqref{eq:thesis_weak} with $\phi_{n+1}$ as the test function together with the Assumption that \eqref{eq:discrete_induction_step} holds for $n$ implies that \eqref{eq:discrete_induction_step} holds for $n+1$. Hence, we obtain by induction that \eqref{eq:discrete_induction_step} holds for any $n\in\N$. Thus $f(\sd\setminus \N_0^d,t)=0$ for all $t\ge0$. This means that there exists $n:\dsd\times[0,\infty)\to[0,\infty)$ such that $f(\cdot,t)=\sum_{\alpha>0}n(\alpha,t)\delta_\alpha(\cdot)$ for all $t\ge 0$. Then \eqref{eq:some_discrete_things} follows automatically from conditions \eqref{eq:bound_gamma_moment} and \eqref{eq:conservation}. 

   Take $\alpha \in \dsd$. Let $\varphi\in C_c(\sd)$ be such that $\varphi(\alpha)=1$ and $\varphi(\beta)=0$ for all $\beta\in\dsd,\beta\ne\alpha$. By Definition \ref{def:time-dep-sol-thesis} we know that $t\mapsto \int_\sd \varphi(x) f(dx,t)$ is continuously differentiable. On the other hand $\int_\sd \varphi(x) f(dx,t) = f(\{\alpha\},t))=n(\alpha,t)$. This proves that $t\mapsto n(\alpha,t)$ is continuously differentiable.

   Plugging $\varphi$ into the coagulation equation \eqref{eq:thesis_weak} and taking time derivative from both sides together with $f(\cdot,t)=\sum_{\alpha>0}n(\alpha,t)\delta_\alpha(\cdot)$
   imply 
   \begin{align*}
                \frac{d}{dt}n({\alpha},t)
                =&
                \frac{1}{2}
                \sum_{\alpha'>0}
                \sum_{\beta>0}
                K(\alpha',\beta)n(\alpha',t)n(\beta,t)
                \left[\varphi(\alpha'+\beta)-\varphi(\alpha')-\varphi(\beta)\right]
                \\
                =& 
                \frac{1}{2}
                \sum_{\beta<\alpha}
                K(\alpha-\beta,\beta)n(\alpha-\beta,t)n(\beta,t)
                \\&
                -
                \frac{1}{2}
                n(\alpha,t)
                \sum_{\beta>0}
                K(\alpha,\beta)n(\beta,t)
                -
                \frac{1}{2}
                n(\alpha,t)
                \sum_{\alpha'>0}
                K(\alpha',\alpha)n(\alpha',t),
\end{align*}
where we used the properties of $\varphi$ to obtain the second step. After renaming $\alpha'$ by $\beta$ in the final term and using symmetry of $K$, we have arrived to the discrete coagulation equation \eqref{eq:B4_discrete}.
     
\end{proof}

\begin{proofof}[Proof of Corollary \ref{cor:discrete_existence}]
    This follows automatically from Theorem \ref{thm:existence} and Proposition \ref{prop:solution_stays_discrete}. 
\end{proofof}

%auto-ignore
% final sec 5
\section{Proof of mass-conservation}
\label{sec:mass}

\begin{proofof}[Proof of Theorem \ref{thm:mass_conservation}]
    Fix arbitrary $j\in\{1,2,\dots,d\}$ and $t\ge0$.  It remains to be proven
    that $\int_\sd x_jf(dx,t)=\int_\sd x_jf_0(dx)$. The idea is to take a
    specific sequence of compactly supported test functions $\varphi_k$, which
    approximates $x_j$. Then the result follows from Dominated Convergence
    Theorem after putting $\varphi_k$ into \eqref{eq:thesis_weak} and taking
    the limit $k\to\infty$.

    We start by constructing the test functions $\varphi_k$. Take two smooth functions
    $\eta_1,\eta_2\in C^\infty((0,\infty))$ such that
    $\ind_{[1,\infty)}\leq\eta_1\leq\ind_{[\frac{1}{2},\infty)}$ and
    $\ind_{(0,1]}\leq\eta_2\leq\ind_{(0,2]}$. For each $k\in\N$ let
    $\varphi_k \in C_c^\infty(\sd)$ be defined by
    $\varphi_k(x)=x_j\eta_1(k|x|)\eta_2(\frac{1}{k}|x|)$. Then
    $\varphi_k(x)=x_j$ whenever $\frac{1}{k}\leq |x| \leq k$. Since
    $|\partial_i\varphi_k(x)|\leq
    1+\max_{r\in[1/2,1]}|\eta_1'(r)|+2\max_{r\in[1,2]}|\eta_2'(r)|$ for all $x\in\sd$, $i\in\{1,2,\dots,d\}$ and $k\in\N$, there exists $C>0$ independent of $k$ such that 
    the Lipschitz property  
    $|\varphi_k(x+y)-\varphi_k(x)|\leq C
    |y|$ holds for all $x,y\in \sd$. 

    %For each $k\in\N$,
    %take
    %$\varphi_k\in C_c(\sd;[0,\infty))$ such that $\varphi_k(x)=x_j$ whenever
    %$x\in[0,k]^d\setminus [0,\frac{1}{k}]^d$, and $\varphi_k(x)\leq |x|$ for
    %all $x\in\sd$, and $\varphi_k$ is Lipschitz in the sense that for some
    %$C>0$ independent of $k$ there holds 
    %$|\varphi_k(x+y)-\varphi_k(x)|\leq C
    %|y|$ for all $x,y\in \sd$. 

    Telescoping with $\int_\sd\varphi_k(x)f(dx,t)$ and
    $\int_\sd\varphi_k(x)f_0(dx)$ and using the triangle inequality imply 
    \begin{align}
        \nonumber
        \left| 
            \int_\sd x_jf(dx,t)-\int_\sd x_j f_0(dx)
        \right|  
        \leq&
        \left| 
            \int_\sd x_jf(dx,t)-\int_\sd \varphi_k(x)f(dx,t)
        \right|  
        \\
        \nonumber
        &+
        \left| 
            \int_\sd x_jf_0(dx)-\int_\sd \varphi_k(x)f_0(dx)
        \right|  
        \\
        &+
        \left| 
            \int_\sd \varphi_k(x)f(dx,t)-\int_\sd \varphi_k(x)f_0(dx)
        \right|  
        \label{eq:some_mass_ineq}
    \end{align}

    Note that $\lim_{k \to \infty} \varphi_k(x)=x_j$ for all $x\in\sd$. Hence,
    Dominated Convergence Theorem together with \eqref{CondInVal} imply that the 
    first two terms on the right hand side of \eqref{eq:some_mass_ineq} goes to zero 
    as $k\to\infty$. Thereby, 
    \[
        \left| 
            \int_\sd x_jf(dx,t)-\int_\sd x_j f_0(dx)
        \right|  
        \leq
        \lim_{k \to \infty}
        \left|\int_{\sd}\varphi_k(x)f(dx,t)-\int_{\sd}\varphi_k(x)f_0(dx)\right|.
    \]
    It only remains to be proven that the right hand side of the above inequality 
    is zero.

    Putting the time independent test function $\varphi_k$ into the coagulation
    equation \eqref{eq:thesis_weak} and using the symmetry of the kernel $K$ imply
    \begin{align}
        \nonumber
        &\left|\int_{\sd}\varphi_k(x)f(dx,t)-\int_{\sd}\varphi_k(x)f_0(dx)\right|
        \\
        \label{eq:estimate_mass_conservation}
        &\leq
        \int_0^t\iint_{\{|y|\leq |x|\}}
        K(x,y)
        \left| 
            \varphi_k(x+y)-\varphi_k(x)-\varphi_k(y)
        \right| 
        f(dx,s)f(dy,s)
        ds.
    \end{align}
    Since $\lim_{k \to \infty} \varphi_k(x+y)-\varphi_k(x)-\varphi_k(y)=0$ for
    all $x,y\in\sd$, it is sufficient to prove that we can take the limit
    $k\to\infty$ inside the integrals on the right hand side of
    \eqref{eq:estimate_mass_conservation}. For this we will use Dominated Convergence Theorem. Define {$$-\lambda_0:=-\beta \text{  and  } \gamma_0:=-2\beta,$$} i.e. $\gamma_0+\lambda_0=-\beta$.
     By the upper bound
    \eqref{eq:condK_sym2} of $K$, Lipschitz continuity of $\varphi_k$ and
    $\varphi_k(x)\leq|x|$, there holds 
    \[
        K(x,y)
        \left| 
            \varphi_k(x+y)-\varphi_k(x)-\varphi_k(y)
        \right| 
        \leq 
        C 
        \sum_{l=0}^{1}|x|^{\gamma_l+\lambda _l}|y|^{1-\lambda _l}
    \]
    whenever $|y|\leq\min(1,|x|)$, and 
    \begin{align*}
        K(x,y)
        \left| 
            \varphi_k(x+y)-\varphi_k(x)-\varphi_k(y)
        \right| 
        \leq 
        C |x|^{\gamma_2+\lambda_2}|y|^{1-\lambda_2}
    \end{align*}
    whenever $1\leq |y|\leq|x|$. Then 
    \begin{align}
        \label{eq:estimate_mass_conservation2}
        \sup_{s\in[0,t]}
        \sum_{l=0}^{1}\iint_{\{|y|\leq \min(1,|x|)\}}
        |x|^{\gamma_l+\lambda _l}|y|^{1-\lambda _l} 
        f(dx,s)f(dy,s)
        <
        \infty
    \end{align}
    by \eqref{eq:bound_gamma_moment} and the fact that $|y|^{1-\lambda _l}\leq|y|^{-\lambda _l}$ for
    $|y|\leq1$.

    Suppose that $-\lambda_2\ge0$. Then $|x|^{\lambda_2}|y|^{-\lambda_2}\leq1$ whenever
    $|y|\leq|x|$. Hence, $\gamma_2\leq1$ implies
    $|x|^{\gamma_2+\lambda_2}|y|^{1-\lambda_2}\leq |x|^{\gamma_2}|y|\leq|x||y|$ for all
    $(x,y)\in\{|y|\leq|x|\}\cap\{|y|>1\}$. Then \eqref{eq:conservation} yields
    \begin{align}
        \label{eq:estimate_mass_conservation3}
        \sup_{s\in[0,t]}
        \iint_{\{|y|\leq|x|\}\cap\{|y|>1\}}
        |x|^{\gamma_2+\lambda_2
        }|y|^{1-\lambda_2} 
        f(dx,s)f(dy,s) 
        <\infty.
    \end{align}

    Suppose that $-\lambda_2<0$. Then $|y|^{1-\lambda_2}\leq |y|$ for $|y|>1$. Also
    $|x|^{\gamma_2+\lambda_2}\leq|x|$ for $|x|>1$, since $\gamma_2+\lambda_2\leq1$. Hence,
    \eqref{eq:conservation} imply
    \begin{align}
        \label{eq:estimate_mass_conservation4}
        \sup_{s\in[0,t]}
        \iint_{\{|y|\leq|x|\}\cap\{|y|>1\}}
        |x|^{\gamma_2+\lambda_2}|y|^{1-\lambda_2} 
        f(dx,s)f(dy,s) 
        <\infty.
    \end{align}

    \iffalse
    Then, \eqref{eq:estimate_mass_conservation2}, \eqref{eq:estimate_mass_conservation3} and 
    \eqref{eq:estimate_mass_conservation4} together imply
    \[
        \sup_{s\in[0,t]}
        \sum_{l=1}^{3}\iint_{\{|y|\leq|x|\}}
        |x|^{\gamma_l+\lambda _l}|y|^{1-\lambda _l} 
        f(dx,s)f(dy,s) 
        <\infty.
    \]
    \fi
    Thus, Dominated Convergence Theorem allows us to take the limit
    $k\to\infty$ inside the integrals on the right hand side of
    \eqref{eq:estimate_mass_conservation}.

\end{proofof}

%auto-ignore
% final sec 6

\section{Proof of gelation}
\label{sec:gel}

%Next we see that if for some $k \in \{1,2,3\}$, it holds $\gamma_k > 1$, then any solution fails to conserve mass.  
In order to prove the gelation Theorem \ref{thm:gel} we need the following technical result \ref{prop:posteriori_upper_bound} and its Corollary \ref{cor:L2}.

\begin{proposition}
    \label{prop:posteriori_upper_bound}
    Suppose that $K$ is as in \eqref{eq:condK_cont} and satisfies the upper
        bound \eqref{eq:condK_sym2}, and the lower bound \eqref{eq:lower}with $c_1>0$ and $\ggel,\pgel$ satisfying $\ggel>1$,
        and  $-\pgel\leq\ggel+\pgel\leq 1$. 
    Let $\Phi\colon[0,\infty)\to[0,\infty)$ be such that $\Phi(0)=0$, and $\Phi$ is differentiable almost everywhere with respect to the Lebesgue measure, and the the derivative satisfies 
    \begin{align}
        C_\Phi := \int_0^\infty \Phi'(A)A^{-1/2}dA<\infty. 
    \end{align}
    Then for any solution $f$ to \eqref{B4} in the sense of Definition \ref{def:time-dep-sol-thesis}, and for all $T\ge0$, there holds
    \begin{align}
        \int_T^\infty \left(\int_\sd f(dx,t)|x|^{\ggel/2}\Phi(|x|)
        \right)^2dt\leq \frac{2C_\Phi^2}{c_1}\int_\sd |x|f(dx,t).
    \end{align}
\end{proposition}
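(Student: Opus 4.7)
The plan is to combine a simple consequence of the lower bound \eqref{eq:lower}, a family of truncated sublinear test functions indexed by a threshold $R>0$, and a Cauchy--Schwarz decomposition in the parameter $R$ to reconstruct $\Phi$. I read the right-hand side of the claim as $\int_\sd |x| f(dx,T)$, which by \eqref{eq:conservation} is at most $\int_\sd |x| f_0(dx)$. The first observation is that AM--GM applied to the two symmetric terms in \eqref{eq:lower} yields the simpler pointwise bound $K(x,y) \ge 2c_1\,|x|^{\ggel/2}|y|^{\ggel/2}$ for all $x,y\in\sd$, which is the only lower bound I will use.

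The core step is a single-threshold estimate. For each $R>0$ set $\psi_R(A):=\tfrac12\min(A,R)$. The map $x\mapsto\psi_R(|x|)$ is bounded by $R/2$, dominated by $|x|/2$, and is the pointwise limit of an obvious sequence in $C_c(\sd)$, so Proposition \ref{prop:valid_test_functions} makes it admissible in \eqref{eq:thesis_weak}. A short case check gives
\[
\psi_R(a)+\psi_R(b)-\psi_R(a+b)\ \ge\ \tfrac{R}{2}\,\ind_{\{a\ge R\}}\ind_{\{b\ge R\}}\qquad\forall\, a,b\ge0.
\]
Since the bracket in the weak equation is non-positive, $t\mapsto\int\psi_R(|x|)f(dx,t)$ is non-increasing and therefore converges as $t\to\infty$. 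Subtracting \eqref{eq:thesis_weak} at time $T'$ from the same equation at time $T$ and letting $T'\to\infty$ then gives, after inserting the kernel lower bound from the previous paragraph,
\[
\frac{c_1R}{2}\int_T^\infty\!\Big(\int_{\{|x|\ge R\}}\!|x|^{\ggel/2}f(dx,t)\Big)^{\!2}\,dt\ \le\ \int\psi_R(|x|)f(dx,T)\ \le\ \tfrac12\int|x|f(dx,T).
\]
Writing $M(R,t):=\int_{\{|x|\ge R\}}|x|^{\ggel/2}f(dx,t)$, this is the key single-threshold inequality $\int_T^\infty M(R,t)^2\,dt\le(c_1R)^{-1}\int|x|f(dx,T)$.

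The final step is assembly for general $\Phi$. Since $\Phi(0)=0$, the layer-cake formula gives $\Phi(|x|)=\int_0^\infty\Phi'(R)\ind_{\{|x|\ge R\}}\,dR$, hence $\int|x|^{\ggel/2}\Phi(|x|)f(dx,t)=\int_0^\infty\Phi'(R)M(R,t)\,dR$. I then apply Cauchy--Schwarz against the measure $\Phi'(R)R^{-1/2}\,dR$, whose total mass is exactly $C_\Phi$:
\[
\Big(\int|x|^{\ggel/2}\Phi(|x|)f(dx,t)\Big)^{\!2}\ \le\ C_\Phi\int_0^\infty\Phi'(R)R^{1/2}M(R,t)^2\,dR.
\]
Integrating in $t\in[T,\infty)$, swapping the two integrations by Fubini, and inserting the single-threshold bound collapses the right-hand side to $(C_\Phi^2/c_1)\int|x|f(dx,T)$, which is sharper than the claimed bound by a factor of $2$.

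The main technical point I expect to need care with is the passage $T'\to\infty$ in the weak formulation with the non-compactly supported test function $\psi_R(|\cdot|)$; this rests on Proposition \ref{prop:valid_test_functions} and on the monotonicity of $t\mapsto\int\psi_R(|x|)f(dx,t)$, both of which are routine once set up. The AM--GM reduction of the kernel and the particular choice of Cauchy--Schwarz weight $R^{-1/2}$ are the real ideas: they are precisely what produce the exponent $A^{-1/2}$ appearing in the definition of $C_\Phi$.
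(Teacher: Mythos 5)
Your proposal is correct and takes essentially the same route as the paper, whose proof simply defers to the classical argument of \cite[Theorem 2.2]{Gelation} built on the test functions $x\mapsto\min(|x|,A)$: your admissible truncated test functions, the AM--GM reduction of the lower bound \eqref{eq:lower} to $K(x,y)\ge 2c_1|x|^{\ggel/2}|y|^{\ggel/2}$, the single-threshold estimate, and the layer-cake assembly are precisely that argument, with Cauchy--Schwarz in the threshold variable replacing the usual Minkowski integral inequality and even giving the slightly sharper constant $C_\Phi^2/c_1$. Your reading of the right-hand side as $\int_\sd |x|f(dx,T)$ is also the intended one.
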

\begin{proof}
    This result has been proved for $L^p$-functions and $d=1$ in \cite[Theorem 2.2]{Gelation}. The proof relies on the use of test functions $x\mapsto\min(x,A)$ labeled by $A\ge0$. The exactly same argument works in our case for measure valued solutions and the only modification needed for the multicomponent setting is to use test
    functions $x\mapsto \min(|x|,A)$. Note that by the Proposition \ref{prop:valid_test_functions} the coagulation equation \eqref{eq:thesis_weak} holds for the test functions $x\mapsto\min(|x|,A)$.
\end{proof}

\begin{corollary}
    \label{cor:L2}
    Suppose the assumptions of the above Proposition \ref{prop:posteriori_upper_bound} and $\ggel\in(1,2)$. Then 
    \begin{align}
        \int_T^\infty \left(\int_{\{|x|\ge R\}} |x|f(dx,t)
        \right)^2dt \leq C_{\ggel} R^{1-\ggel}\int_\sd|x|f(dx,T)
    \end{align}
    for all $T\ge0$ and $R>0$, where $C_{\ggel}\in(0,\infty)$ depends on $\ggel$.
\end{corollary}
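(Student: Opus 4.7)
The strategy is to invoke Proposition \ref{prop:posteriori_upper_bound} with a carefully chosen $\Phi$ designed so that $|x|^{\ggel/2}\Phi(|x|)$ dominates $|x|\ind(|x|\ge R)$. Solving $|x|^{\ggel/2}\Phi(|x|)\ge|x|$ for $|x|\ge R$ gives $\Phi(A)\ge A^{1-\ggel/2}$ on $[R,\infty)$. The naive choice $\Phi(A)=A^{1-\ggel/2}$ is unusable because $\int_0^\infty A^{-(\ggel+1)/2}dA$ diverges at both endpoints (recall $\ggel\in(1,2)$), so I will truncate it.

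Fix once and for all a smooth cutoff $g\in C^1([0,\infty))$, nondecreasing, with $g\equiv 0$ on $[0,1/2]$ and $g\equiv 1$ on $[1,\infty)$, and define, for each $R>0$,
\[
\Phi_R(A) := A^{1-\ggel/2}\,g(A/R).
\]
Then $\Phi_R(0)=0$, $\Phi_R\in C^1$ (hence is a.e.\ differentiable with nonnegative derivative), and $\Phi_R(A)=A^{1-\ggel/2}$ for $A\ge R$, so
\[
|x|^{\ggel/2}\Phi_R(|x|)\;\ge\;|x|\,\ind(|x|\ge R)\qquad\text{for all }x\in\sd.
\]

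The main quantitative step is to bound $C_{\Phi_R}$. Writing $\Phi_R'(A)=(1-\ggel/2)A^{-\ggel/2}g(A/R)+R^{-1}A^{1-\ggel/2}g'(A/R)$, both terms are supported in $[R/2,\infty)$. For the first term, $\int_{R/2}^\infty A^{-(\ggel+1)/2}dA=\frac{2}{\ggel-1}(R/2)^{(1-\ggel)/2}$, which is finite and of order $R^{(1-\ggel)/2}$ precisely because $\ggel>1$. For the second term, since $g'$ is supported in $[1/2,1]$ and bounded,
\[
R^{-1}\int_{R/2}^{R}A^{(1-\ggel)/2}\|g'\|_\infty\,dA\;\le\;C_g\,R^{(1-\ggel)/2},
\]
using $\ggel<2$ to keep $A^{(1-\ggel)/2}$ locally integrable. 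Combining these estimates gives $C_{\Phi_R}\le\tilde C_\ggel\,R^{(1-\ggel)/2}$ for a constant $\tilde C_\ggel$ depending only on $\ggel$ and the fixed cutoff $g$.

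Plugging $\Phi_R$ into Proposition \ref{prop:posteriori_upper_bound} yields
\[
\int_T^\infty\left(\int_{\{|x|\ge R\}}|x|f(dx,t)\right)^{\!2}dt
\;\le\;\int_T^\infty\left(\int_\sd|x|^{\ggel/2}\Phi_R(|x|)f(dx,t)\right)^{\!2}dt
\;\le\;\frac{2\tilde C_\ggel^{\,2}}{c_1}\,R^{1-\ggel}\int_\sd|x|f(dx,T),
\]
which is the desired inequality with $C_\ggel=2\tilde C_\ggel^{\,2}/c_1$. There is no serious obstacle: the only delicate point is keeping track of the $R$-dependence in $C_{\Phi_R}$, and the strict inequalities $1<\ggel<2$ are exactly what make the two pieces of the integral defining $C_{\Phi_R}$ finite and of the correct order $R^{(1-\ggel)/2}$.
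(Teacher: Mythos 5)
Your proof is correct and follows essentially the same route as the paper: apply Proposition \ref{prop:posteriori_upper_bound} to a truncation of $A^{1-\ggel/2}$ supported in $[R/2,\infty)$, check that $|x|^{\ggel/2}\Phi(|x|)$ dominates $|x|\ind(|x|\ge R)$, and verify $C_\Phi\le C R^{(1-\ggel)/2}$ using $1<\ggel<2$. The only difference is cosmetic: the paper takes $\Phi(A)=\max\bigl(0,A^{1-\ggel/2}-(R/2)^{1-\ggel/2}\bigr)$ (at the cost of an extra constant factor $1-2^{\ggel/2-1}$ in the domination step), whereas you use a smooth cutoff $A^{1-\ggel/2}g(A/R)$ and instead absorb the $g'$ contribution into $C_{\Phi_R}$.
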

\begin{proof}
    This follows from Proposition \ref{prop:posteriori_upper_bound}
    with $\Phi(A)=\max(0,A^{1-\ggel/2}-(R/2)^{1-\ggel/2})$.
\end{proof}

We are now ready to give the proof of the gelation Theorem \ref{thm:gel}.

\begin{proofof}[Proof of Theorem \ref{thm:gel}]
    This proof follows the argument given in the proof of \cite[Theorem 2.4]{Dust} for 
    $L^1$ valued solutions. We present the proof in our setting for the readers convenience.
    
    Fix $R>0$ so that 
    \begin{align}
        \int_{\{|x|\leq R\}}|x|f_0(dx)\leq \frac{1}{2} \int_\sd|x|f_0(dx).
    \end{align}
    By Proposition \ref{prop:valid_test_functions}, the equation \eqref{eq:thesis_weak} holds for the test function $\varphi(x,t)=|x|\ind(|x|\leq R)$.  Since $|x+y|\ind(|x+y|\leq R)-|x|\ind(|x|\leq R)-|y|\ind(|y|\leq R)\leq 0$ for all $x,y\in\sd$, the equation \eqref{eq:thesis_weak} gives that $\int_{\{|x|\leq R\}}|x|f(dx,t)\leq\int_{\{|x|\leq R\}}|x|f_0(dx)$ for all $t\ge0$.

    Suppose towards contradiction that $\int_\sd|x|f(dx,t)=\int_\sd|x|f_0(dx)$ for all $t$. Then 
    \begin{align}
        \int_\sd|x|f_0(dx) =& \int_{\{|x|< R\}}|x|f(dx,t)
        + \int_{\{|x|\ge R\}}|x|f(dx,t)
        \\
        \leq& \frac{1}{2} \int_\sd|x|f_0(dx)+ \int_{\{|x|\ge R\}}|x|f(dx,t),
    \end{align}
    and so,
    \begin{align}
        \label{eq:contradiction}
        \frac{1}{2} \int_\sd|x|f_0(dx) \leq \int_{\{|x|\ge R\}}|x|f(dx,t)
    \end{align}
    Corollary \ref{cor:L2} for $T=0$ implies
    \begin{align}
        \int_0^\infty \left( \int_{\{|x|\ge R\}} |x|f(dx,t)
        \right)^2dt <\infty
    \end{align}
    Hence, $\int_0^\infty \left( \frac{1}{2} \int_\sd|x|f_0(dx)
        \right)^2dt<\infty$. This is a contradiction as $\frac{1}{2} \int_\sd|x|f_0(dx)>0$
        is a non zero constant.    
\end{proofof}

\newpage

%auto-ignore
% final sec 7
\section{Localization in mass-conserving solutions}
\label{sec:localization}

The so-called localization property has recently raised some interest in the mathematical study of multicomponent systems \cite{FLNV2, localization, Hoogendijk2024}.  In general terms, it consists in the concentration of the solution along a line asymptotically for large sizes in the multicomponent space $\R_*^d$. In \cite{localization} this property has been formulated  as follows: there is a positive continuous function $a(t)$ such that $\lim_{t\to \infty} a(t)=0$  and
\begin{equation}\label{eq:localization}
\lim_{t \to \infty} \left|\int_{A_t \cap \left\{\left|\frac{x}{|x|}-\frac{m_0}{|m_0|}\right|\leq a(t)\right\} } |x|f(x,t)dx-|m_0| \right|=0
\end{equation}
with $A_t$ the self-similar region defined by $A_t:=\{ x\in \R_\ast^d \ | \ a(t)t^{\frac{1}{1-\gamma'}}\leq |x| \leq a(t)^{-1}t^{\frac{1}{1-\gamma'}}\}$ and $m_0$ the initial mass-vector defined by 
$m_0:=\int_{\R^d_*} xf_0(dx)$. Here $\gamma'\in [0,1)$ is the homogeneity of the upper and lower bound of the underlying kernel, namely, the kernel satisfies 
\begin{equation}
\label{eq:classKernels}
c'_1(|x|^{\gamma'+\lambda'}|y|^{-\lambda'} +  |y|^{\gamma'+\lambda'}|x|^{-\lambda'}) \leq K(x,y) \leq c'_2(|x|^{\gamma'+\lambda'}|y|^{-\lambda'} +  |y|^{\gamma'+\lambda'}|x|^{-\lambda'}),
\end{equation}
 with $\lambda' \in \R$, such that  $ -\lambda' \leq \gamma'+\lambda'$ and $\gamma'+\lambda' \in [0,1)$ and some constants $c'_1,c'_2>0$. Note that if $K$ satisfies \eqref{eq:classKernels}, then $K$ satisfies the conditions of the existence Theorem \ref{thm:existence} and the mass-conservation Theorem \ref{thm:mass_conservation} with $\beta=\lambda'$ and $\gamma_j =  \gamma'$ and $\lambda_j = \lambda'$, for $j=1,2$.

The existence of mass-conserving solutions satisfying the localization property \eqref{eq:localization} is proved in \cite[Theorem 1.1]{localization}. We note that \cite[Theorem 1.1]{localization} is missing an assumption that the kernel needs to be homogeneous, i.e., satisfy 
\begin{equation}
    \label{eq:homogeneous}
    K(\rho x,\rho y) = \rho ^{\gamma'} K(x,y), \quad \rho >0, \ x,y \in \R_*^d.
\end{equation}
The homogeneity condition \eqref{eq:homogeneous} is needed in 
\cite[Theorem 1.1]{localization} to prove that the constructed solution $f$ satisfies for some constants $C_0>0$ and  $k>1$, the moment bound 
\begin{equation}\label{eq:momentBound}
\int_{\R^d_*} |x|^k f(dx,t) \leq C_0t^{\frac{k-1}{1-\gamma'}}, \quad t \geq 1.
\end{equation}
The derivation of the growth bound \eqref{eq:momentBound} was given in \cite[Lemma 5.2]{localization} following a generalization of the one-component case given in \cite{Dust} for $L^1$ solutions. We note that the existence of mass-conserving solutions was only outlined in the proof of \cite[Theorem 1.1]{localization}, which also served as a motivation to prove the existence Theorem \eqref{thm:existence} and the mass-conservation Theorem \eqref{thm:mass_conservation} of the present paper. 
In fact, following the same steps as in \cite[Lemma 5.2]{localization} it is possible to prove that the regularized solution $f_\epsilon$ defined in Section \eqref{sec:regulized} of the present paper satisfies the bound \eqref{eq:momentBound} with $C_0$ independent of $\epsilon$. 
Consequently, under the same conditions as in \cite[Lemma 5.2]{localization}, the solution $f$ we construct in the proof of Theorem \ref{thm:existence} also satisfies the bound \eqref{eq:momentBound}.  
By combining this construction with our mass-conservation Theorem \ref{thm:mass_conservation} and the results in \cite{localization} we then obtain the following result.

\begin{theorem} 
\label{thm:localiation}
Suppose that $K$ satisfies \eqref{eq:condK_cont}, the homogeneity property \eqref{eq:homogeneous} and the lower and upper bounds
    \eqref{eq:classKernels} with the real parameters $\lambda'$ and $\gamma'$ satisfying $ -\lambda' \leq \gamma'+\lambda'$ and $\gamma'+\lambda',\gamma' \in [0,1)$. Suppose
    that  a given initial data $f_0 \in \mathcal{M}_+(\sd)$ satisfies

    \begin{equation}
       \label{eq:localiation_ini_cond}%
        \int_{\sd}\left(|x|^{-\lambda'-r}+|x|^{1+r}\right)f_0(dx) <\infty
    \end{equation}
      for some $r>0$. 
%Let $\gamma'$ and $\lambda'$ be given real numbers satisfying $ -\lambda' \leq \gamma'+\lambda'$, $\gamma'+\lambda' <1$ and $\gamma' <1$.
% Suppose that $K$ and $f_0$ satisfy the conditions of Theorem \ref{thm:existence} with $\beta=\lambda'$ and $\gamma_j =  \gamma'$ and $\lambda_j = \lambda'$, for $j=1,2$.
Then, there is a mass-conserving solution
    $f:[0,\infty)\to\M_+(\sd)$  to \eqref{B4} with the
    initial data $f_0$ in the sense of definition
    \ref{def:time-dep-sol-thesis} satisfying the growth bound \eqref{eq:momentBound} and the localization property \eqref{eq:localization}.
    Moreover, if the initial data satisfies a second moment bound 
    $$\int_{|x|<1} |x|^{-2\lambda'}f_0(dx) + \int_{|x|>1} |x|^{2(\gamma'+\lambda')}f_0(dx) <\infty,$$ 
    then this solution is unique.  
\end{theorem}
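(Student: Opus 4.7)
The plan is to assemble the theorem from three main pieces: the existence and mass conservation machinery already developed in this paper, the moment growth estimate of \cite[Lemma 5.2]{localization} suitably applied to the regularized sequence, and the localization statement \cite[Theorem 1.1]{localization} together with the uniqueness result of \cite{ThromUniqueness}. First I would check compatibility of the hypotheses. With $\beta = \lambda'$, $\gamma_j=\gamma'$, $\lambda_j=\lambda'$ for $j=1,2$, the bound \eqref{eq:classKernels} implies both \eqref{eq:condK_sym2} and the lower bound needed elsewhere, and the assumption $\gamma'+\lambda'\in[0,1)$ gives $\gamma_j+\lambda_j<1$, fulfilling the existence hypothesis of Theorem \ref{thm:existence}. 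The initial condition \eqref{eq:localiation_ini_cond} implies the moment assumption \eqref{eq:ini_cond} (since $\min(-\beta,-\lambda_1)=-\lambda'$) and also \eqref{CondInVal}. Theorem \ref{thm:existence} then produces a weak solution $f$ as a weak-$*$ limit (along a subsequence $\epsilon_n\to 0$) of the regularized solutions $f_{\epsilon_n}$ from Section \ref{sec:regulized}, and Theorem \ref{thm:mass_conservation} provides mass conservation because $\gamma_2=\gamma'<1$.

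The heart of the proof is establishing the growth estimate \eqref{eq:momentBound} for the particular solution produced above. I would first prove a uniform-in-$\epsilon$ version of \eqref{eq:momentBound} for the regularized solutions $f_\epsilon$ by reproducing the computation of \cite[Lemma 5.2]{localization}. Concretely, pick a smooth compactly supported test function that equals $|x|^k$ on a large annulus, insert it into the regularized coagulation equation \eqref{coagEq_regu_measure1}, exploit the bound $\varphi(x+y)-\varphi(x)-\varphi(y)\leq C_k|x|^{k-1}|y|$ on the relevant region together with homogeneity \eqref{eq:homogeneous} and the upper bound in \eqref{eq:classKernels}, and derive a differential inequality of the form
\begin{equation*}
\frac{d}{dt}\int_\sd |x|^k f_\epsilon(dx,t) \leq C \Big(\int_\sd |x|^k f_\epsilon(dx,t)\Big)^{\frac{k-1+\gamma'}{k-1}}
\end{equation*}
(modulo lower-order terms controlled by mass conservation and the $|x|^{1+r}$-moment bound of the initial data, which is preserved by a computation analogous to Proposition \ref{prop:trunc_stuff_conservation}). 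Since the truncation $\zeta_\epsilon$ only removes mass to the right, all estimates go through with constants independent of $\epsilon$, yielding the desired bound $\int_\sd |x|^k f_\epsilon(dx,t)\leq C_0 t^{(k-1)/(1-\gamma')}$ for $t\geq 1$ with $C_0$ independent of $\epsilon$. Then I would pass to the weak-$*$ limit using Proposition \ref{prop:candidate_bounds} and a truncation-plus-monotone-convergence argument (exactly as in the proof of \eqref{eq:candidate_bounded1}) to transfer the bound to $f$. The main obstacle is this uniform moment bound, because the propagation argument from \cite{localization} was stated for $L^1$ solutions, and one must check that every step survives when the solutions are only measures and that the regularization does not ruin the $|x|^{1+r}$ moment estimate that feeds the Gronwall argument.

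Once \eqref{eq:momentBound} is available for $f$, together with mass conservation, the localization conclusion \eqref{eq:localization} follows from \cite[Theorem 1.1]{localization}, whose proof only uses mass conservation, the growth bound \eqref{eq:momentBound} and the kernel hypotheses \eqref{eq:classKernels} and \eqref{eq:homogeneous}; none of these relies on the solution being $L^1$. Finally, under the stronger second moment assumption on $f_0$, the solution is unique because \cite{ThromUniqueness} establishes uniqueness in the class of measure-valued weak solutions satisfying such a moment bound, and our $f$ lies in this class by the propagation of the second moment that can be verified via the same test-function argument in \eqref{coagEq_regu_measure1} applied with $\varphi$ approximating $|x|^{-2\lambda'}\ind_{|x|\leq 1}+|x|^{2(\gamma'+\lambda')}\ind_{|x|>1}$ and then passing to the limit $\epsilon\to 0$.
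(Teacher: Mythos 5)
Your proposal follows essentially the same route as the paper: translate the hypotheses via $\beta=\lambda'$, $\gamma_j=\gamma'$, $\lambda_j=\lambda'$ into Theorems \ref{thm:existence} and \ref{thm:mass_conservation}, establish the growth bound \eqref{eq:momentBound} uniformly in $\epsilon$ for the regularized solutions by repeating the argument of \cite[Lemma 5.2]{localization} and pass it to the limiting solution, then invoke \cite[Theorem 1.1]{localization} for the localization property and \cite{ThromUniqueness} for uniqueness. One minor slip: the displayed differential inequality should have exponent $(k-2+\gamma')/(k-1)$ rather than $(k-1+\gamma')/(k-1)>1$ (the latter would not integrate to polynomial growth), but this does not affect the argument since you defer to the computation in \cite[Lemma 5.2]{localization}.
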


The last part of Theorem \ref{thm:localiation} follows directly from the uniqueness of measure-valued solutions obtained by Throm in \cite{ThromUniqueness}.

Recently, a similar localization result was obtained in \cite{Hoogendijk2024} for the product kernel, namely, the multiplicative kernel $K(x,y)= x^TAy$, with $A$ a symmetric and irreducible matrix. This indicates that localization should be expected to hold also for general gelling kernels.

\bigskip

\noindent \textbf{Acknowledgements}
 We would like to thank Jani
Lukkarinen, Juan Velázquez, and Aleksis Vuoksenmaa for the several discussions relevant to the project.
The authors gratefully acknowledge the support of the Academy of Finland, via an Academy project (project No. 339228) and the Finnish Centre of Excellence in Randomness and Structures (project No. 346306). The research of M. A. Ferreira has also been partially funded by the Faculty of Science of University of Helsinki Support Funding 2022, ERC Advanced Grant 741487 and the Centre for Mathematics of the University of Coimbra - UIDB/00324/2020 (funded by the Portuguese Government through FCT/MCTES, \url{https://doi.org/10.54499/CEECINST/00099/2021/CP2783/CT0002}). 
 %and the Starting Package 2024 funded by the Centre National de la Recherche Scientifique.
 The funders had no role in study design, analysis, decision to publish, or preparation of the manuscript.

\noindent\textbf{Compliance with ethical standards} 
\smallskip

\noindent \textbf{Conflict of interest} The authors declare that they have no conflict of interest.

%\include{bibliography}
%\cleardoublepage %fixes the position of bibliography in bookmarks
\phantomsection

\addcontentsline{toc}{section}{\bibname} % This lines adds the bibliography to the ToC
\bibliographystyle{abbrv}
\bibliography{bibliography}

%\nocite{*}
 
\end{document}